\newcommand\NoBlackBoxes{\global\overfullrule0pt}
\theoremstyle{plain} 
\newtheorem{theorem}{Theorem}[section]
\newtheorem{lemma}[theorem]{Lemma}
\newtheorem{corollary}[theorem]{Corollary}
\newtheorem{proposition}[theorem]{Proposition}
\newtheorem{remark}[theorem]{Remark}
\numberwithin{equation}{section}
\renewcommand\Re{\operatorname{Re}}
\renewcommand\Im{\operatorname{Im}}
\newcommand{\Cal}{\mathcal}
\newcommand{\tc}{\textcolor{red}}
\begin{document}
\title{Characterization problems for linear forms with free summands.} 

\author{G. P. Chistyakov$^{1,2,3}$}
\thanks{1) Faculty of Mathematics, 
University of Bielefeld, Germany} 
\thanks{2) Institute for Low Temperature Physics and Engineering, 
Kharkov, Ukraine}
\thanks{3) Research supported by SFB 701}
\address
{Gennadii Chistyakov \newline
Fakult\"at f\"ur Mathematik\newline
Universit\"at Bielefeld\newline
Postfach 100131\newline
33501 Bielefeld \newline
Germany}
\email {chistyak@math.uni-bielefeld.de}

\author{F. G\"otze$^{1,3}$}
\address
{Friedrich G\"otze\newline
Fakult\"at f\"ur Mathematik\newline
Universit\"at Bielefeld\newline
Postfach 100131\newline
33501 Bielefeld \newline
Germany}
\email {goetze@math.uni-bielefeld.de}



\date{October 2011}
\keywords{ Free random variables, free convolutions, Cauchy transform,
Mellin transform, characterization of a~circular law and finite free cumulant series,
the~Darmois-Skitovich theorem}

\subjclass{
Primary 46L50, 60E07; Secondary 60E10}  

\leftmark
\rightmark
\begin{abstract}
Let $T_1,\dots,T_n$ denote free random variables. 
For two linear forms $L_1=\sum_{j=1}^n a_jT_j$ and $L_2=\sum_{j=1}^n b_jT_j$ 
with real coefficients $a_j$ and $b_j$
we shall describe all distributions of $T_1,\dots,T_n$ such that $L_1$ and $L_2$ are free. 
For identically distributed free random variables $T_1,\dots,T_n$ with 
distribution $\mu$ we establish necessary and sufficient 
conditions on the~coefficients $a_j,b_j,\,j=1,\dots,n,$ such that 
the~statements:\\ $(i)$ $\mu$ is a~centered semicircular distribution; and 
 $(ii)$ \, $L_1$ and $L_2$ are identically distributed ($L_1\stackrel{D}{=}L_2$); 
are equivalent. In the proof we give  a complete characterization of all sequences 
of free cumulants of  measures with compact support and with a finite number of non zero entries.
The characterization is based on  topological properties of regions defined by means of 
the Voiculescu transform $\phi$ of such sequences.

\end{abstract}

\maketitle

\section{Introduction} 
The~intensive research on the~asymptotic behaviour of random matrices
induced  more research on their infinitely dimensional limiting models as well. 
Free convolution of probability measures, 
introduced by D. Voiculescu, may be regarded as such a model~\cite{Vo:1986},
\cite{Vo:1987}.
The~key concept of this definition is the~notion of freeness,
which can be interpreted as a~kind of independence for non-commutative
random variables. As in the~classical probability the~concept of
independence gives rise to the~classical convolution, the~concept
of freeness leads to a~binary operation on the~probability measures
on the~real line, the~free convolution. Many classical results
in the~theory of addition of independent random variables have their
counterpart in this theory, such as the~law of large numbers,
the~central limit theorem, the~L\'evy-Khintchine formula and others.
We refer to Voiculescu, Dykema and Nica~\cite{Vo:1992}, Hiai and Petz~\cite{HiPe:2000},
and Nica and Speicher~\cite{NiSp:2006} for an~introduction to these topics.

In many problems of mathematical statistics, conclusions are based on
the~fact that certain special distributions have important
properties which permit the reduction of the~original problem to
a~substantially simpler one, for instance via the~notion of sufficiency. 

The~simplest type of statistics of independent observations, admitting 
a~fairly complete description of the~mutual independence and identical distribution,
are linear statistics. 

Consider independent scalar random variables $X_1,\dots, X_n$ (not necessary identically
distributed) and two linear statistics 
\begin{equation}\label{1.1}
L_1:=\sum_{j=1}^n\alpha_j X_j\quad\text{and}\quad L_2:=\sum_{j=1}^n\beta_j X_j,
\end{equation}
where $\alpha_j,\,\beta_j$ are real constant coefficients. It turns out that the~independence
of the~two linear statistics $L_1$ and $L_2$ essentially characterizes 
the~normality of the~variables $X_j$.
To be precise, the~following assertion, due to Darmois~\cite{Dar:1953} and 
Skitovich~\cite{Sk:1953}, \cite{Sk:1954}, holds.

Let $L_1$ and $L_2$ given by (\ref{1.1}) be independent. Then the~random variables
$X_j$ such that $\alpha_j\beta_j\ne 0$, i.e., which enter in both $L_1$ and $L_2$, have normal
distributions.

Note that the~converse proposition holds in the~following form: if $\sum_{j=1}^n \alpha_j
\beta_j Var( X_j)=0$ and all $X_j$ such that $\alpha_j\beta_j\ne 0$ are normal,
then $L_1$ and $L_2$ are independent.

Polya~\cite{Po:1923} was the~first who established that only the~normal distribution
leads to identically distributed linear statistics $X_1$ and $a_1X_1+a_2X_2$, where $X_1$ and $X_2$
are independent and identically distributed. Marcinkiewicz~\cite{Mar:1938} proved that 
distributions having moments of all orders and admitting the~existence of
a~ nontrivial pair of identically distributed linear statistics based on 
a~random sample are normal. Yu. V.~Linnik~\cite{Li:1963}, \cite{Li:1962} described the~class
of symmetric distributions admitting identically distributed linear
statistics and studied in detail the~problem of characterizing 
the~normal distribution via properties of such statistics.

In this paper we give a~complete description of those free random variables
$T_1,\dots, T_n$ such that the~linear statistics $a_1T_1+\dots a_nT_n$ and 
$b_1T_1+\dots b_nT_n$ are free.

In addition we prove an~analogue of Yu. V.~Linnik's results
\cite{Li:1963}, \cite{Li:1962}, \cite{KLR:1973}, 
and give the~solution of the~problem of characterization of the~semicircular
distribution via identical distribution of linear
statistics $a_1T_1+\dots a_nT_n$ and $b_1T_1+\dots b_nT_n$, where
$T_1,\dots,T_n$ are identically distributed free random variables.

\section{Results}

Assume that $A$ is a~finite von Neumann algebra with normal faithful
trace state $\tau$. The~pair $(A,\tau)$ will be called a~{\it tracial}
$W^*$-{\it probability space}. Assume that $A$ is acting on a~Hilbert
space $H$. We will denote  by $\tilde A$ the~set of all operators
on $H$ which are affiliated with $A$ and by $\tilde{A}_{sa}$ the~set
of selfadjoint operators affiliated with $A$. Recall that a (generally unbounded) selfadjoint
operator $X$ on $H$ is called {\it affiliated} with $A$ if all the~spectral projections
of $X$ belong to $A$. The~elements of $\tilde{A}_{sa}$ will be regarded
as (possibly) unbounded random variables. Let $T\in \tilde{A}_{sa}$.
The~distribution $\mu_{T}$ of $T$ is the~unique  probability measure 
on $\Bbb R$ satisfying the~equality
$$
\tau(u(T))=\int\limits_{\Bbb R}u(\lambda)\,\mu_T(d\lambda)
$$
for every bounded Borel function $u$ on $\mathbb R$. 

Recall that a~family $\{T_j\}_{j=1}^k$ of elements of $T\in \tilde{A}_{sa}$ is said
to be free if for all bounded continuous functions $u_1,u_2,\dots,u_n$
on $\mathbb R$ we have $\tau(u_1(T_{j_1})u_2(T_{j_2})\dots u_n(T_{j_n}))=0$
whenever $\tau(u_l(T_{j_l}))=0,\,l=1,\dots,n$, and all alternating
sequences $j_1,j_2,\dots,j_n$ of $1$'s, $2$'s, and $k$'s, i.e., $j_1\ne j_2\ne
\dots\ne j_n$. 

Bercovici and 
Voiculescu~\cite{BeVo:1993} proved that if $T_j\in\tilde{A}_{sa}$ are
free random variables for $j=1,\dots,n$, and $Q$ is a~selfadjoint 
polynomial in $n$ non-commuting variables, then the~distribution of
the~random variable $Q(T_1,T_2,\dots,T_n)$ depends only on 
the~distributions  of $T_1,T_2,\dots,T_n$.

If $Q(T_1,\dots,T_n)=T_1+\dots+T_n$, then the~distribution of $T_1+\dots+T_n$ only depends
on the~$\mu_{T_j}$ and is called the~{\it additive free convolution} of
$\mu_{T_1},\dots,\mu_{T_n}$. Denote this distribution by $\mu_{T_1}\boxplus\dots\boxplus
\mu_{T_n}$.

Let $T_1,\dots,T_n$ denote free random variables 
with distributions $\mu_1,\dots,\mu_n$, respectively. Consider two linear statistics 
\begin{equation}\label{2.1}
L_1:=a_1T_1+\dots+a_nT_n\quad\text{ and}\quad L_2:=b_1T_1+\dots+b_nT_n
\end{equation} 
with real coefficients $a_j$ and $b_j$.
In the~sequel we assume without loss of generality
that $|a_j|\le 1$ and $|b_j|\le 1$ for all $j=1,\dots,n$.

Denote by $\mu_w$ the~{\it standard} semicircular measure, i.e.,
the~measure with the~density $p_{w}(x)=\frac 1{2\pi}\sqrt{(4-x^2)_+}$,
where $a_+=\max\{0,a\}$. We shall call measures with densities
$p(x)=\frac 1{2\pi a^2}\sqrt{(4a^2-(x-b)^2)_+}$ with some $a>0$ and $b\in\mathbb R$ semicircular.

Nica~\cite{Ni:1996} established that the~stability of freeness under rotations characterizes
semicircular random variables. 
Lehner~\cite{Leh:2004} proved that there are free random variables $T_1,T_2,T_3$
which are not semicircular and such that $L_1:=a_1T_1+a_2T_2+a_3T_3$ and
$L_2:=b_1T_1+b_2T_2+b_3T_3$ are free. Hence the~analogue of the~Darmois--Skitovich
theorem fails in the~free case if there are at least three random variables involved.
We can nevertheless describe all free random variables $T_1,\dots,T_n$
for which the~linear statistics $L_1$ and $L_2$ in (\ref{2.1}) are free. Our result
extends the~results of Nica and Lehner considerably.
In order to formulate our first result we need the~following notation.

Let $T\in\tilde{A}_{as}$ be a~given random variable with distribution $\mu$
such that $\beta_n(\mu):=\int_{\mathbb R}|x|^n\,\mu(dx)<\infty$ for some $n\in\mathbb N$
and let $T^{(k)},\,k=1,\dots,n$, 
be its free copies. Let $\omega$ be $n$-th primitive root of unity (e.g., 
$\omega=e^{2\pi i/n}$) and set
\begin{equation}\label{2.2}
T^{\omega}=\omega T^{(1)}+\omega^2 T^{(2)}+\dots+\omega^n T^{(n)}.
\end{equation} 
Following Lehner~\cite{Leh:2004}, we define the~$n$th free cumulant of the~random variable $T$
to be
\begin{equation}\label{2.3}
\kappa_n(T)=\frac 1n\tau((T^{\omega})^n)
\end{equation} 
in a~short way. 
For a detailed definition of free cumulants 
see the monograph of Nica and Speicher~\cite{NiSp:2006} as well.
Since the free cumulant $\kappa_n$ depends on $n$ and the~distribution $\mu$ of $T$ only, we will denote
this cumulant by $\kappa_n(\mu)$ as well.

\begin{theorem}\label{2.1ath}
Consider free random variables $T_1,\dots,T_n,\,n\ge 2$, and let
$a_j,\,b_j$ be real numbers such that $a_jb_j\ne 0$ and $\frac{b_j}{a_j}\ne \frac{b_s}{a_s}$
for $j,s=1,\dots,m$, where $m\le n$, and $a_jb_j=0$ for $j=m+1,\dots,n$. The~linear statistics 
$L_1$ and $L_2$ are free
if and only if the~distributions $\mu_1,\dots,\mu_m$ have compact supports
and the~free cumulants $\kappa_s(T_j),\,j=1,\dots,m$, satisfy the~relations:
\begin{align}\label{2.4}
\sum_{j=1}^m a_j^lb_j^t\kappa_s(T_j)=0
\end{align} 
for all $s=2,\dots,m$ and $(l,t)\in\mathbb N^2$ such that $l+t=s$,
and $\kappa_s(T_j)=0$ for $s\ge m+1$.
\end{theorem}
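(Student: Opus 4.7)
The plan is to reformulate the freeness of $L_1$ and $L_2$ as a system of linear equations in the free cumulants $\kappa_s(T_j)$. Recall that freeness of $L_1$ and $L_2$ is equivalent to the vanishing of every mixed free cumulant $\kappa_s(L_{\epsilon_1}, \ldots, L_{\epsilon_s})$ in which both $L_1$ and $L_2$ occur, provided such cumulants are defined. Writing $c_1^{(j)}=a_j$, $c_2^{(j)}=b_j$, the multilinearity of free cumulants together with the vanishing of mixed cumulants of the free family $(T_1,\ldots,T_n)$ gives
\begin{equation*}
\kappa_s(L_{\epsilon_1}, \ldots, L_{\epsilon_s}) \;=\; \sum_{j=1}^n a_j^l\, b_j^{s-l}\, \kappa_s(T_j),
\end{equation*}
where $l$ is the number of indices with $\epsilon_k=1$. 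Because $a_j b_j = 0$ for $j>m$, the terms with $j>m$ drop out whenever $1\le l\le s-1$, so freeness of $L_1,L_2$ reduces to
\begin{equation*}
\sum_{j=1}^m a_j^l\, b_j^{s-l}\, \kappa_s(T_j) = 0, \qquad s\ge 2,\ 1\le l\le s-1.
\end{equation*}

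For sufficiency, the hypothesis covers the range $2\le s\le m$, while the prescribed vanishing $\kappa_s(T_j)=0$ for $s\ge m+1$, $j\le m$, together with compact support of $\mu_1,\ldots,\mu_m$ (which guarantees that cumulants are defined and that their vanishing characterizes freeness), annihilates all remaining mixed cumulants. For necessity of the cumulant identities, once cumulant existence is granted, the decisive point is that for $s\ge m+1$ the above system has $s-1\ge m$ equations in the $m$ unknowns $\kappa_s(T_1),\ldots,\kappa_s(T_m)$. The coefficient matrix $M_{l,j}=a_j^l b_j^{s-l}$ factors as $b_j^s (a_j/b_j)^l$; restricting to rows $l=1,\ldots,m$ and extracting $\mathrm{diag}(a_j b_j^{s-1})$ produces a classical Vandermonde matrix in the ratios $a_j/b_j$, which by hypothesis are distinct and nonzero. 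This minor is nonsingular, so the whole system forces $\kappa_s(T_j)=0$ for all $j\le m$ and $s\ge m+1$; the identities for $2\le s\le m$ are then read off directly.

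The main obstacle is showing that $\mu_1,\ldots,\mu_m$ are actually compactly supported, which is precisely what is needed to make the cumulant calculus above rigorous. \emph{A~priori} the $T_j$ are only affiliated operators, so no moments need exist. I would proceed in two stages. First, starting from the moments of $L_1$ and $L_2$ and using the trace identities that freeness of $L_1,L_2$ imposes on mixed moments in these two variables, one should bootstrap in the moment order to prove that each $T_j$ with $a_jb_j\ne 0$ has moments of every order; this puts all $\kappa_s(T_j)$ into play and allows the Vandermonde step above to reduce the cumulant sequence of each $\mu_j$ to one with finitely many nonzero entries. The compact support of $\mu_j$ must then be deduced from the companion result announced in the abstract: a complete characterization of those free cumulant sequences with only finitely many nonzero terms that arise from genuine probability measures, obtained by studying the topological and analytic structure of regions defined by the Voiculescu transform~$\phi$ of such sequences. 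This last ingredient is the deepest point, because not every polynomial in $1/z$ is a Voiculescu transform; verifying that the ones arising here do correspond to compactly supported probability measures on $\mathbb{R}$ is what makes the ``only if'' direction substantial rather than purely algebraic.
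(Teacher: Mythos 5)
Your sufficiency argument coincides with the paper's: vanishing of the mixed cumulants $\kappa_s(L_{j_1},\dots,L_{j_s})=\sum_j a_j^q b_j^{s-q}\kappa_s(T_j)$ via multilinearity and Theorem~\ref{3c.1th}, and this part is fine. The gap is in necessity, and it is exactly the point you flag as ``the main obstacle'' but do not actually close. Your plan is to ``bootstrap in the moment order'' starting from ``the moments of $L_1$ and $L_2$'' --- but the hypothesis is only that $L_1$ and $L_2$ are free; nothing guarantees that $L_1$, $L_2$, or any $T_j$ has even a first moment, so the bootstrap has no starting point, and the cumulant calculus (including your Vandermonde step, which is otherwise a correct piece of linear algebra) cannot be set in motion. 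Moreover, your appeal to Theorem~\ref{2.2ath} is misdirected for this implication: that theorem decides which \emph{finite} sequences are realized by some measure (it is what powers the counterexamples and Corollary~\ref{2.2acor}), whereas here the measures $\mu_j$ already exist and one only needs to show their cumulant sequences terminate --- after which compact support is immediate from the elementary growth criterion of Proposition~\ref{3.4abcpr}, with no topological input.

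The missing idea is the paper's functional-equation step, which is the free analogue of the classical Darmois--Skitovich argument. From freeness of $L_1,L_2$ one gets freeness of $uL_1$ and $vL_2$ for all real $u,v$, and computing $\phi_{uL_1+vL_2}$ in two ways (once via freeness of the $T_j$, once via freeness of $uL_1,vL_2$) yields, at $z=i$, the Pexider-type equation
\begin{equation*}
\psi_1(ua_1+vb_1)+\dots+\psi_m(ua_m+vb_m)=A(u)+B(v),\qquad |u|,|v|<\delta,
\end{equation*}
for the \emph{continuous} functions $\psi_j(w)=w\,\phi_{T_j}(i/w)$ (the $j>m$ terms are absorbed into $A$ or $B$ since $a_jb_j=0$). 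The Kagan--Linnik--Rao lemma (Lemma~\ref{3.1aal}), applicable because the ratios $b_j/a_j$ are distinct, forces each $\psi_j$ to be a polynomial of degree at most $m$; hence $\phi_{T_j}(z)=\sum_{s=1}^m d_{sj}z^{1-s}$, the coefficients are shown to be real by a reflection argument, Proposition~\ref{3.5pr} then delivers existence of all moments with $d_{sj}=\kappa_s(\mu_j)$ and $\kappa_s(\mu_j)=0$ for $s>m$, compact support follows from Proposition~\ref{3.4abcpr}, and differentiating the equation in $u$ and $v$ gives (\ref{2.4}). Without this (or some substitute that manufactures moments from scratch), your necessity argument does not get off the ground.
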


The~following result describes all distributions $\mu_1,\dots,\mu_m$ in the~previous theorem.
Let $\kappa_1,\dots,\kappa_m$ be real numbers. Introduce the function
\begin{equation}\notag
\varphi(z):=\kappa_1+\frac{\kappa_2}{z}+\dots+\frac{\kappa_m}{z^{m-1}},\quad z\in\mathbb C\setminus\{0\}. 
\end{equation}
Denote by $\Omega_{\varphi}$ 
the component of $\{z\in\mathbb C^+:\Im (z+\varphi(z))>0\}$
which contains $\infty$. 
\begin{theorem}\label{2.2ath}
A~sequence $\{\kappa_n\}_{n=1}^{\infty}$ of real numbers such that $\kappa_n=0$ for
$n\ge m+1,\,m\ge 2$, is a~sequence of free cumulants of some probability measure with compact support 
if and only if every Jordan curve, contained in $\mathbb C^+\cup\mathbb R$ and connecting $0$ and $\infty$,
contains a point of the boundary of $\Omega_{\varphi}$.
\end{theorem}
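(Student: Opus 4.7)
The plan is to reformulate the statement as a question about the rational map $H(z) := z + \varphi(z)$, of degree $m$. For any compactly supported probability measure $\mu$, the reciprocal Cauchy transform $F_\mu = 1/G_\mu$ satisfies $F_\mu^{-1}(w) = w + \phi_\mu(w)$ near infinity, where $\phi_\mu(w) = \kappa_1(\mu) + \kappa_2(\mu)/w + \cdots$ has coefficients equal to the free cumulants of $\mu$. So the vanishing hypothesis $\kappa_n = 0$ for $n > m$ forces $\phi_\mu = \varphi$ and $F_\mu^{-1} = H$ wherever the inversion is valid. Conversely, given an analytic $F : \mathbb{C}^+ \to \mathbb{C}^+$ satisfying $H \circ F = \mathrm{id}$ and $F(z)/z \to 1$ non-tangentially at $\infty$, the Nevanlinna representation theorem produces a unique probability measure $\mu$ with $F_\mu = F$; compact support of $\mu$ follows since $F$ extends meromorphically across $\mathbb{R}$ outside a bounded set, as can be read off from the real locus of $H$. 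The theorem thus reduces to: the map $H$ admits such a global right inverse $F$ on $\mathbb{C}^+$ if and only if the Jordan-curve condition on $\Omega_\varphi$ holds.

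For necessity, given $\mu$ I would verify that $F_\mu$ is a conformal bijection of $\mathbb{C}^+$ onto $F_\mu(\mathbb{C}^+) = \Omega_\varphi$: the image lies in $\{\Im H > 0\} \cap \mathbb{C}^+$, contains a neighborhood of $\infty$ since $F_\mu(z) \sim z$ there, and equals $\Omega_\varphi$ by an open-mapping argument (a boundary point of $F_\mu(\mathbb{C}^+)$ lying inside $\Omega_\varphi$ would be mapped by $H$ into $\mathbb{C}^+$ and would therefore already lie in $F_\mu(\mathbb{C}^+)$, a contradiction). Since $\kappa_m \neq 0$, $\varphi$ has a pole at $0$, so $0 \notin \Omega_\varphi$ while $\infty \in \Omega_\varphi$. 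Any Jordan arc in $\overline{\mathbb{C}^+}$ joining $0$ and $\infty$ must therefore cross $\partial \Omega_\varphi$ at an interior point.

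For sufficiency, assuming the Jordan-curve condition I would show that $H$ restricts to a conformal bijection $\Omega_\varphi \to \mathbb{C}^+$. Since $H(z) = z + \kappa_1 + O(1/z)$ at infinity, a local inverse $F$ of $H$ exists on a $\mathbb{C}^+$-neighborhood of $\infty$ with values in $\Omega_\varphi$, and the task is to continue $F$ analytically to all of $\mathbb{C}^+$. An obstruction to continuation along a path $\gamma \subset \mathbb{C}^+$ would force the continuation to approach either a critical value of $H$ on $\partial \Omega_\varphi$ or the pole $0$ of $H$. Using that $H$ has real coefficients, so that $H(\overline z) = \overline{H(z)}$ and $H$ is real on $\mathbb{R}$, I would then lift the end of $\gamma$ through $H$ and, by Schwarz reflection where necessary, complete it to a Jordan arc in $\overline{\mathbb{C}^+}$ from $0$ to $\infty$ whose interior avoids $\partial \Omega_\varphi$, contradicting the hypothesis. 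Hence $F$ extends to all of $\mathbb{C}^+$, and the measure $\mu$ is constructed via the Nevanlinna reduction described above.

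The principal obstacle is the sufficiency step: translating the topological Jordan-curve condition into the complex-analytic bijectivity of $H$ on $\Omega_\varphi$. A priori the set $\{z \in \mathbb{C}^+ : \Im H(z) > 0\}$ may have several components and $H$ may cover $\mathbb{C}^+$ more than once; the Jordan-curve hypothesis must be used precisely to rule out any such multi-sheeted behavior. This will require careful analysis of the critical points of $H$, the components of $\{\Im H > 0\}$ in $\mathbb{C}^+$, and how they fit together under reflection across $\mathbb{R}$.
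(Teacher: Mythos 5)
Your framework is the same as the paper's: reduce the theorem to showing that $H(z)=z+\varphi(z)$ maps $\Omega_{\varphi}$ conformally onto $\mathbb C^+$, so that its inverse lies in $\mathcal F$, Nevanlinna theory produces $\mu$, and compact support follows from the finiteness of the cumulant sequence (Proposition~\ref{3.4abcpr}). However, both halves of your argument have problems.

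In the necessity direction, the final step is not valid as written. The assertion ``$\varphi$ has a pole at $0$, so $0\notin\Omega_{\varphi}$'' does not yield that every arc from $0$ to $\infty$ meets $\partial\Omega_{\varphi}$ at an \emph{interior} point: $0\notin\Omega_{\varphi}$ is automatic since $\Omega_{\varphi}\subset\mathbb C^+$, and what you actually need is $0\notin\overline{\Omega_{\varphi}}$, which the pole alone does not give. Indeed, for $\varphi(z)=\kappa_2/z$ with $\kappa_2<0$ one has $\Im H(z)=\Im z\,\bigl(1+|\kappa_2|/|z|^2\bigr)>0$ on all of $\mathbb C^+$, so $\Omega_{\varphi}=\mathbb C^+$ and the positive imaginary axis is an arc from $0$ to $\infty$ whose interior avoids $\partial\Omega_{\varphi}$, even though $\varphi$ has a pole at $0$ --- and of course no measure with $\kappa_2<0$ exists. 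The existence of $\mu$ must therefore be used at this very point. The paper does exactly that: if an arc has its interior in $\Omega_{\varphi}$ and accumulates at $0$, then along it $F_{\mu}(H(z))=z\to 0$, while $H(z)\to\infty$ and $F_{\mu}(w)\sim w\to\infty$ by compact support, a contradiction. You have the needed ingredient (you prove $F_{\mu}$ maps $\mathbb C^+$ onto $\Omega_{\varphi}$ with inverse $H$), but the deduction you actually wrote down bypasses it and is false without it.

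In the sufficiency direction --- the substantive half --- no proof is given: you yourself flag the passage from the Jordan-curve hypothesis to the bijectivity of $H$ on $\Omega_{\varphi}$ as ``the principal obstacle,'' and the monodromy sketch does not explain why an obstruction to continuing the local inverse $F$ would produce a Jordan arc from $0$ to $\infty$ avoiding $\partial\Omega_{\varphi}$; that implication is essentially the content of the theorem and cannot be left as a remark. The paper avoids monodromy altogether: under the hypothesis, $\partial\Omega_{\varphi}$ is a single Jordan arc consisting of two real half-lines $(-\infty,a]$ and $[b,+\infty)$ joined by an algebraic arc in $\mathbb C^+$ on which $\Im H=0$; truncating this arc at $\pm R$ and closing it with the semicircle $|z|=R$ gives a contour whose $H$-image is a path along $\mathbb R$ from $-A_{-R}$ to $A_R$ followed by a far-away path in $\overline{\mathbb C^+}$, so the argument principle gives exactly one preimage in $\Omega_{\varphi}$ of each $w\in\mathbb C^+$. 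To complete your proof you would need either to carry out this winding-number computation or to supply the missing topological analysis of the components of $\{\Im H>0\}$ that your sketch defers.
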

\begin{remark}\label{2.2arem}
Consider the set $S$ of free cumulant sequences of the form $\{\kappa_1,\kappa_2,\dots,\kappa_m,0,0\dots\}$.
Then the set $\{\kappa_1,\kappa_2,\dots,\kappa_m\}$ is closed in 
the space $\mathbb R^{m}$. 
\end{remark}

We easily obtain from Theorem~\ref{2.2ath} the following consequence.
\begin{corollary}\label{2.3acor} 
A~sequence $\{\kappa_n\}_{n=1}^{\infty}$ of real numbers such that 
$\kappa_2>0$, $\kappa_n=0$ for $n\ge m+1,\,m\ge 2$, and $|\kappa_n|\le \varepsilon,\,n=3,\dots,m$,
with sufficiently small $\varepsilon>0$,
is a~sequence of free cumulants of some probability measure with compact support. 
\end{corollary}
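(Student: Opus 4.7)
The plan is to apply Theorem~\ref{2.2ath}. The strategy is to show that for $\varepsilon$ small the topological separation property of $\Omega_\varphi$, easily verified in the pure semicircular case $\kappa_3=\cdots=\kappa_m=0$, persists under the perturbation. In that pure case, $\Im(z+\varphi(z))=\Im z\,(1-\kappa_2/|z|^2)$ on $\mathbb{C}^+$, so $\Omega_\varphi=\{z\in\mathbb{C}^+:|z|>\sqrt{\kappa_2}\}$; its boundary contains the upper half-circle $\{|z|=\sqrt{\kappa_2},\,\Im z\ge 0\}$ together with the real rays $\mathbb{R}\cap\{|x|\ge\sqrt{\kappa_2}\}$, and this union separates $0$ from $\infty$ in $\overline{\mathbb{C}^+}$.

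Set $H(z):=\Im(z+\varphi(z))$. For $z=re^{i\theta}$ with $\theta\in(0,\pi)$, the Chebyshev identity $\sin(k\theta)/\sin\theta=U_{k-1}(\cos\theta)$ yields the factorisation
\begin{equation*}
H(re^{i\theta})=\sin\theta\cdot f(r,\cos\theta),\qquad f(r,t):=r-\frac{\kappa_2}{r}-\sum_{k=3}^{m}\frac{\kappa_k\,U_{k-2}(t)}{r^{k-1}}.
\end{equation*}
Since $|U_{k-2}(t)|\le k-1$ on $[-1,1]$, one has $\sup_{t}|f(\sqrt{\kappa_2},t)|=O(\varepsilon)$ while $\partial_rf(\sqrt{\kappa_2},t)=2+O(\varepsilon)$, uniformly in $t$. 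The implicit function theorem therefore provides a unique continuous $r:[-1,1]\to(0,\infty)$ with $f(r(t),t)=0$ and $r(t)\to\sqrt{\kappa_2}$ uniformly as $\varepsilon\to 0$; then $\gamma(\theta):=r(\cos\theta)e^{i\theta}$, $\theta\in[0,\pi]$, is a Jordan arc in $\overline{\mathbb{C}^+}$ from $r(1)>0$ to $-r(-1)<0$. Monotonicity $\partial_rf>0$ implies $H>0$ just outside and $H<0$ just inside the arc, so $\gamma\cap\mathbb{C}^+\subset\partial\Omega_\varphi$.

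The main obstacle is verifying that the real endpoints $r(1)$ and $-r(-1)$ of $\gamma$ lie in $\partial\Omega_\varphi$, since $\sin\theta$ vanishes there. A direct calculation gives $f(r,1)=r(1+\varphi'(r))$ and $f(r,-1)=r(1+\varphi'(-r))$, so $f(r(\pm 1),\pm 1)=0$ forces $1+\varphi'(\pm r(\pm 1))=0$ with signs matched. The Taylor expansion at such an endpoint $x$,
\begin{equation*}
H(x+i\eta)=\eta\bigl(1+\varphi'(x)\bigr)-\frac{\eta^3\varphi'''(x)}{6}+O(\eta^5),
\end{equation*}
then reduces to the third-order term. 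In the pure case $\varphi'''(\pm\sqrt{\kappa_2})=-6/\kappa_2<0$, so by continuity $\varphi'''(\pm r(\pm 1))<0$ for $\varepsilon$ small, which gives $H(x+i\eta)>0$ for small $\eta>0$ and hence $r(1),-r(-1)\in\partial\Omega_\varphi$. Consequently $\gamma\cup(-\infty,-r(-1)]\cup[r(1),\infty)\subset\partial\Omega_\varphi$ separates $0$ from $\infty$ in $\overline{\mathbb{C}^+}$, so every Jordan curve joining these points meets $\partial\Omega_\varphi$, and Theorem~\ref{2.2ath} yields the conclusion.
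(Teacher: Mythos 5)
Your proof is correct and follows the route the paper intends: the paper states Corollary~\ref{2.3acor} as an immediate consequence of Theorem~\ref{2.2ath} without further argument, and you supply the perturbation proof (Chebyshev factorisation of $\Im(z+\varphi(z))$, implicit function theorem around the semicircular circle $|z|=\sqrt{\kappa_2}$, and the third-order analysis at the real endpoints) that makes that application rigorous. The one step worth spelling out is that placing the arc in $\partial\Omega_{\varphi}$ --- the boundary of the component of $\{\Im(z+\varphi(z))>0\}$ containing $\infty$, not merely of the full set --- requires $f(r,t)>0$ for \emph{all} $r>r(t)$, so that each point just outside the arc is joined to $\infty$ by a radial ray inside the set; this follows from your monotonicity bound near $\sqrt{\kappa_2}$ together with the elementary estimate $f(r,t)\ge (r^{2}-\kappa_2)/r - C\varepsilon /r^{2}$ valid for larger $r$.
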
  
Note that Bercovici and 
Voiculescu \cite{BeVo:1995} proved a~more general result than Corollary~\ref{2.3acor}
and showed the~failure of the~well-known Cram\'er and Marcinkiewicz theorems
in free probability theory.
To illustrate these results in a low dimensional example we consider the case $m=4$.
\begin{corollary}\label{2.2acor} 
A~sequence $0,1,\kappa_3,\kappa_4,0,0,\dots$ is a~free cumulant sequence of some probability measure
if and only if $(\kappa_3,\kappa_4)\in D$, where 
\begin{equation}
D:=\Big\{(x,y)\in\mathbb R^2:|x|\le f_1(y),
\,-\frac 1{12}\le y\le \frac 1{36}\Big\}\cup
\Big\{(x,y)\in\mathbb R^2:|x|\le f_2(y),
\,\frac 1{36}<y\le \frac 14\Big\}\notag
\end{equation} 
with
$$f_1(y):=\frac 1{3\sqrt 6}\sqrt{1+\sqrt{1-36y}}\,
\Big(2-\sqrt{1-36y}\Big)\quad \text{for}\,\,\,-\frac 1{12}\le y\le \frac 1{36}; \quad\text{and}$$ 
$$f_2(y) :=\frac{\sqrt 2\root 4\of y}{3\sqrt 3}\frac{(1+\sqrt{1-12\sqrt y+36y})
(2-\sqrt{1-12\sqrt y+36y})}{\sqrt{1-2\sqrt y}}
\quad \text{for}\,\,\frac 1{36}<y<\frac 14; \, f_2\Big(\frac 14\Big) :=0.  $$ 
\end{corollary}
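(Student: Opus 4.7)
The plan is to apply Theorem~\ref{2.2ath} with $m=4$, $\kappa_1=0$, $\kappa_2=1$, and then to translate the abstract separation condition into explicit algebraic constraints on $(\kappa_3,\kappa_4)$. With those values $\varphi(z)=1/z+\kappa_3/z^2+\kappa_4/z^3$ and $F(z):=z+\varphi(z)$, a direct computation yields
\begin{equation}
\Im F(x+iy)=\frac{y\,R(x,y)}{(x^2+y^2)^3},\quad R(x,y):=(x^2+y^2)^3-(x^2+y^2)^2-2\kappa_3 x(x^2+y^2)+\kappa_4(y^2-3x^2),\notag
\end{equation}
so that $\partial\Omega_{\varphi}\cap\mathbb{C}^+\subset\{R=0\}$; on the real line $R(x,0)=x^2q(x)$ with $q(x):=x^4-x^2-2\kappa_3 x-3\kappa_4$, and since $z^4F'(z)=q(z)$, the real roots of $q$ are precisely the real critical points of $F$. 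Theorem~\ref{2.2ath} thus reduces the problem to determining for which $(\kappa_3,\kappa_4)$ some component of $\{R=0\}\cap\overline{\mathbb{C}^+}$ together with a segment of $\mathbb{R}$ bounds a Jordan domain enclosing the origin.

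Next I would carry out a Morse analysis of the harmonic function $\Im F$ on $\mathbb{C}^+$ (whose critical points are the zeros of $F'$, hence of $q$, in $\mathbb{C}^+$) to enumerate the codimension-one mechanisms by which the enclosing loop can degenerate as $(\kappa_3,\kappa_4)$ approaches $\partial D$. There are exactly two: \textbf{(A)} two real zeros of $q$ collide, i.e.\ $q$ has a real double root, so the enclosing loop pinches off along $\mathbb{R}$; or \textbf{(B)} a non-real zero $z_0\in\mathbb{C}^+$ of $q$ acquires $F(z_0)\in\mathbb{R}$, so the loop pinches off at $z_0$ inside the open upper half plane. In case (A), substituting $q(x)=(x-a)^2(x^2+2ax+3a^2-1)$ gives the parametrization $\kappa_3=a(2a^2-1)$, $\kappa_4=a^2(1-3a^2)/3$; the identity $1-36\kappa_4=(6a^2-1)^2$ inverts this explicitly, and on the outer branch $|a|\in[1/\sqrt{6},1/\sqrt{2}]$ --- which is the one that actually bounds $D$, the inner branch lying in the interior --- one obtains $|\kappa_3|=|a|(1-2a^2)=f_1(\kappa_4)$ for $\kappa_4\in[-1/12,1/36]$, with $\kappa_4=-1/12$ coming from the double-double root $x=\pm 1/\sqrt{2}$ and $\kappa_4=1/36$ from the triple root $x=\pm 1/\sqrt{6}$. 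In case (B), writing $z_0=\xi+i\eta$ with $\eta>0$ and eliminating $(\xi,\eta)$ from the three real equations $\Re q(z_0)=\Im q(z_0)=\Im F(z_0)=0$ produces a single relation between $\kappa_3$ and $\kappa_4$; after the substitution $t=\sqrt{\kappa_4}$ this reduces to $|\kappa_3|=f_2(\kappa_4)$ for $\kappa_4\in(1/36,1/4]$, with $\kappa_4=1/4$, $z_0=i/\sqrt{2}$, $\kappa_3=0$ producing the degenerate $F(z_0)=F'(z_0)=0$ and hence $f_2(1/4)=0$.

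The principal obstacle is the Morse analysis establishing the dichotomy (A)/(B) rigorously: one must show that every loss of separation as $(\kappa_3,\kappa_4)$ moves from the interior of $D$ to its boundary occurs through one of these two codimension-one pinch-off mechanisms, and that in each case it is the outer branch of the corresponding locus that actually forms $\partial D$. Continuity then forces $f_1(1/36)=f_2(1/36)=\sqrt{6}/9$, reflecting the confluence of the two mechanisms at the triple-root degeneration of $q$. Once this topological bookkeeping is in place, the explicit formulas for $f_1$ and $f_2$ follow by routine algebraic elimination (Ferrari's resolvent for the depressed quartic $q$ in case (A); a direct resultant computation in case (B)).
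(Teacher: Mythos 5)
Your reduction is sound and runs structurally parallel to the paper's own argument: the paper works in polar coordinates, writing $\Im(z+\varphi(z))=\sin\theta\,P(r,x)/r^3$ with $x=\cos\theta$ and $P(r,x)=r^4-r^2-2\kappa_3xr+(1-4x^2)\kappa_4$, and its two regimes (critical tangency at $x_2=1$ for $\kappa_4\le 1/36$, at an interior $x_2<1$ for $\kappa_4>1/36$) are exactly your cases (A) and (B), since $P(r,1)=q(r)$. Your case (A) algebra is correct and does reproduce $f_1$. But the proposal has a genuine gap, and you have named it yourself: the ``Morse analysis'' establishing that separation in the sense of Theorem~\ref{2.2ath} is lost only through the two pinch-off mechanisms, and that the outer branch of each locus is the binding one, is the entire content of the proof, not a finishing touch. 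The paper's proof consists almost wholly of this bookkeeping: it tracks the positive roots $r_{i,j}(x)$ of $P(\cdot,x)$ across $[-1,1]$, shows the maximal root is continuous exactly up to the tangency condition $-\kappa_3\le\rho(x_2,\kappa_4)/x_2$, and exhibits an explicit Jordan curve from $0$ to $\infty$ inside $\{\Im(z+\varphi(z))>0\}$ when continuity fails. You also omit the (easy but necessary) global arguments excluding $\kappa_4>1/4$ and $\kappa_4<-1/12$, which are not local pinch-offs and are not captured by the dichotomy as stated.

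A second, concrete problem: you assert without computation that the case (B) elimination ``reduces to $|\kappa_3|=f_2(\kappa_4)$.'' You should actually carry this out, because it does not obviously produce the displayed $f_2$. For example, at $\kappa_4=1/16$, $\kappa_3=-1/(2\sqrt2)$, the point $z_0=(1+i)/(2\sqrt2)$ satisfies $q(z_0)=0$ and $F(z_0)=\sqrt2\in\mathbb R$, so $|\kappa_3|=1/(2\sqrt2)\approx0.354$ lies on your case (B) locus; the quantity the paper's proof actually derives as the critical bound, $\rho(x_2,\kappa_4)/x_2=2\sqrt{\kappa_4}\sqrt{1-2\sqrt{\kappa_4}}$, takes the same value $0.354$ there, whereas the displayed $f_2(1/16)=\sqrt3/4\approx0.433$. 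The two expressions agree only at the endpoints $\kappa_4=1/36$ and $\kappa_4=1/4$. So either you must identify and justify a different branch of the elimination, or reconcile your locus with the printed formula; as written, the claim that routine elimination yields $f_2$ is unsubstantiated and appears to conflict with what the elimination actually gives.
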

\begin{figure}[htb]
\includegraphics[scale=0.75] {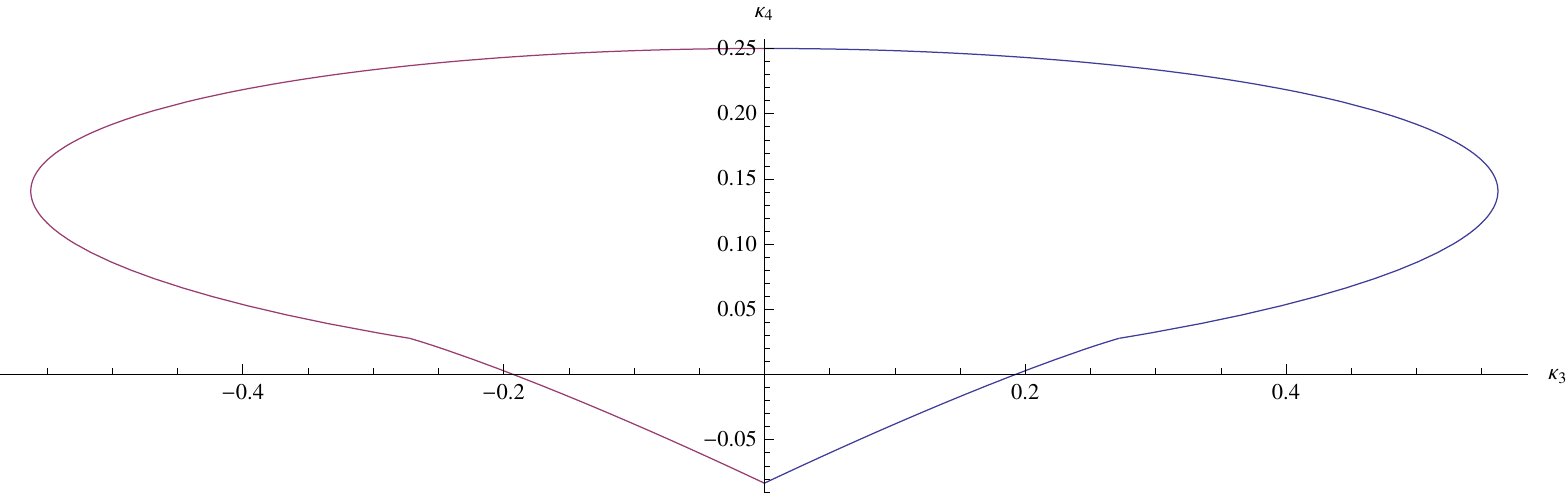}
\caption{\small Region D of realized cumulants $(\kappa_3,\kappa_4)$.}
\label{figure}
\end{figure}
We see from Corollary~\ref{2.2acor} that a~sequence $0,1,\kappa_3,0,\dots$ is 
a~free cumulant sequence of some probability measure
if and only if $|\kappa_3|\le \frac 1{3\sqrt 3}$. 
This assertion was obtained by Lehner (oral communication) by other means.

Now we consider the~problem of the~description of identically distributed free
random variables $T_1,\dots,T_n$ with distribution $\mu$
such that the~statistics $L_1$ and $L_2$ are identically
distributed as well $(L_1\stackrel{D}{=} L_2)$.

Following Linnik~\cite{Li:1963}, we introduce two entire functions of 
a~complex variable $z$:
$$
\Lambda_1(z)=|a_1|^z+\dots+|a_n|^z-|b_1|^z-\dots-|b_n|^z
$$
and
$$
\Lambda_2(z)=a_1^z+\dots+a_n^z-b_1^z-\dots-b_n^z,
$$
where $a_1,\dots,a_n$ and $b_1,\dots, b_n$ are restricted as in (\ref{2.1}).
It is easy to see that all zeros of the~functions $\Lambda_1(z)$ 
lie in a~strip $b_1<\Re z< b_2$ with some $b_1,b_2\in\Bbb R$.

We prove the~following characterization of semicircular measures
which is an~analogue of Linnik's result~\cite{Li:1963} about a~characterization 
of Gaussian probability measures. Recall that a~probability measure $\mu$ is called degenerate if 
$\mu=\delta_a$, where $\delta_a$ is the~Dirac measure concentrated at the~point $a\in\mathbb R$.

\begin{theorem}\label{2.1th}
Let $\Lambda_1(z)\not\equiv 0$.  
In order that, for some non-degenerate probability measure $\mu$, 
the~statement 
$(1)$ $L_1\stackrel{D}{=} L_2$ 
implies the statement
$(2)$ $\mu$ is a~semicircular measure, 
it is necessary and sufficient that the~following conditions are satisfied:  

$\,\,\,$$a)$ $2$ is a~simple and unique positive zero 
of the~function $\Lambda_1(z)$, 

$\,\,\,$$b)$  $\Lambda_2(2m+1)\ne 0$ for all $m=1,\dots$.
\end{theorem}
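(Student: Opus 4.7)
The plan is to translate the equality in distribution $L_1\stackrel{D}{=}L_2$ into an analytic identity for the Voiculescu transform $\phi_\mu$, to extract from it a family of discrete identities on the free cumulants of $\mu$, and to apply the realizability criterion of Theorem~\ref{2.2ath} and Corollary~\ref{2.3acor} to decide which cumulant sequences are compatible both with those identities and with the requirement that $\mu$ be a probability measure. Since $T_1,\dots,T_n$ are free and identically distributed with common law $\mu$, and since $\phi_{aT}(z)=a\,\phi_\mu(z/a)$ for every real $a\neq 0$, the hypothesis $L_1\stackrel{D}{=}L_2$ becomes the analytic identity
\[
\sum_{j=1}^n a_j\,\phi_\mu(z/a_j)=\sum_{j=1}^n b_j\,\phi_\mu(z/b_j)
\]
in a common subdomain of the upper half-plane. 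Assuming for the moment that $\mu$ has compact support, one expands $\phi_\mu(z)=\sum_{s\ge 1}\kappa_s(\mu)\,z^{-(s-1)}$ near $\infty$ and matches coefficients, obtaining the free analogue of Linnik's discrete system
\[
\kappa_s(\mu)\,\Lambda_2(s)=0,\qquad s=1,2,\dots.
\]

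For the sufficiency direction, assume (a) and (b). Every even integer $s\ge 4$ satisfies $\Lambda_2(s)=\Lambda_1(s)\ne 0$ by (a); every odd integer $s\ge 3$ satisfies $\Lambda_2(s)\ne 0$ by (b); while $s=2$ is a zero of $\Lambda_1=\Lambda_2$, leaving $\kappa_2(\mu)$ free. Thus $\kappa_s(\mu)=0$ for every $s\ge 3$. Since $\mu$ is non-degenerate, $\kappa_2(\mu)>0$, and a probability measure whose free cumulants vanish from order three onwards and whose second free cumulant is strictly positive is precisely a semicircular measure.

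For the necessity direction I would argue contrapositively. If some integer $s_0\ge 3$ satisfies $\Lambda_2(s_0)=0$---which occurs whenever either (a) fails at an even integer or (b) fails---I would prescribe the sequence $\kappa_1=0$, $\kappa_2=1$, $\kappa_{s_0}=\varepsilon$, and $\kappa_s=0$ for all other $s$. By Corollary~\ref{2.3acor}, for $\varepsilon>0$ sufficiently small this is the free cumulant sequence of an honest compactly supported probability measure $\mu$; by construction $\mu$ satisfies all identities $\kappa_s\Lambda_2(s)=0$, hence $L_1\stackrel{D}{=}L_2$, yet $\mu$ is non-degenerate and non-semicircular. The residual cases in which (a) fails because $\Lambda_1(2)\ne 0$ or because $2$ is a multiple zero of $\Lambda_1$ can be handled either by observing that the implication then becomes vacuous or by exploiting the multiple-zero structure together with the Mellin-transform analysis described below.

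The main obstacle is the passage from the analytic identity on $\phi_\mu$ to the discrete cumulant identities when $\mu$ is not a priori known to have compact support; the semicircular conclusion is of course compactly supported, but one cannot assume this at the outset. Here I would invoke the Nevanlinna-type representation of $\phi_\mu$ on the upper half-plane together with a Mellin-transform analysis in the spirit of Linnik, separating the real and imaginary parts of the functional equation on the imaginary axis into two equations whose Mellin transforms are multiplied respectively by $\Lambda_2(s+1)$ and $\Lambda_1(s+1)$. The precise zero structure of $\Lambda_1$ and the odd-integer zero structure of $\Lambda_2$ postulated in (a) and (b) is exactly what is needed to force these Mellin transforms to encode only the contribution $\kappa_1+\kappa_2/z$, i.e.\ to conclude that $\mu$ is semicircular.
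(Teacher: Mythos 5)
Your reduction to the discrete system $\kappa_s(\mu)\,\Lambda_2(s)=0$ and the sufficiency argument built on it are correct \emph{provided} $\mu$ is already known to have compact support (or at least all moments), but that is precisely the hypothesis you are not entitled to, and it is the crux of the theorem: the versions with moment assumptions are the paper's Theorems~\ref{2.2th} and~\ref{2.3th}, whereas Theorem~\ref{2.1th} assumes nothing about $\mu$ beyond non-degeneracy. You flag this as ``the main obstacle'' and gesture at a Mellin-transform analysis, but the content of the paper's proof lives exactly there and is not a routine step. Concretely, the paper sets $v(t):=t\,\Im\phi_\mu(i/t)\ge 0$, shows via Proposition~\ref{3.4pr} and a L\'evy--Raikov argument that the minimal active exponent satisfies $0<\sigma_1(v)\le 2$ (Lemma~\ref{4.1lem}; otherwise $\mu\boxplus\overline{\mu}=\delta_0$ and $\mu$ is degenerate), uses condition $a)$ to force $\sigma_1(v)=2$, subtracts the $t^2$ term to kill that active exponent, and then runs a dichotomy: either a positive active exponent survives (contradiction with Lemma~\ref{3.1lem}) or $v_1$ vanishes along a sequence $t_k\to 0$, in which case the moment-determinacy Lemma~\ref{3.5l} yields that $\mu\boxplus\overline{\mu}$ is semicircular; only then does condition $b)$ enter, via Lemma~\ref{4.2lem}, to descend from the symmetrization to $\mu$ itself. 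None of this machinery is replaceable by ``expand $\phi_\mu$ at infinity and match coefficients.''

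The necessity direction has a second, independent gap. Your contrapositive only treats the case where some \emph{integer} $s_0\ge 3$ satisfies $\Lambda_2(s_0)=0$, for which the perturbation $\phi_\mu(z)=1/z+\varepsilon z^{-(s_0-1)}$ indeed works (this is the paper's construction too). But condition $a)$ can fail in ways that produce no integer zero of $\Lambda_2$ at all: $\Lambda_1$ may have a positive zero $\gamma_1\notin\{2\}$ that is non-integer or odd (note $\Lambda_1(s)=\Lambda_2(s)$ only at \emph{even} integers), or $2$ may be a multiple zero of $\Lambda_1$. These cases are not vacuous --- the paper exhibits explicit non-semicircular counterexamples for each: free stable laws with $\phi(z)=-e^{i\gamma_1\pi/2}z^{-\gamma_1+1}$ for $0<\gamma_1<2$, the perturbation of Lemma~\ref{5.2lem} with $\phi(z)=\tfrac1z-\varepsilon\cos\bigl((\gamma_1-1)\pi/2\bigr)\,i e^{i(\gamma_1-1)\pi/2}z^{-(\gamma_1-1)}$ for $\gamma_1>2$, and the logarithmic perturbation $\phi(z)=\bigl(1+\varepsilon(\log z-i\pi/2)\bigr)/z$ of Lemma~\ref{5.3lem} for a multiple zero at $2$. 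Each of these requires its own verification (via the argument principle) that the prescribed $\phi$ is genuinely a Voiculescu transform, which your proposal does not address.
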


Note that $(2)$ implies $(1)$ if $\Lambda_2(1)=\Lambda_1(2)=0$. Let $\mu$ be a~semicircular measure
with mean zero, then $(2)$ implies $(1)$ iff $\Lambda_1(2)=0$.

From Theorem~\ref{2.1th} we obviously obtain the~following consequences.
\begin{corollary}\label{2.1^{**}co}
Let $\Lambda_1(z)\not\equiv 0,\,\Lambda_2(1)=0$ and let $\Lambda_1(1)\ne\Lambda_2(1)$. 
In order that, for some non-degenerate probability measure $\mu$,
the statements $(1)$ and $(2)$ are equivalent, it is necessary and sufficient that the~conditions $a)$ and $b)$
are satisfied.
\end{corollary}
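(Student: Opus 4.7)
The plan is to deduce this corollary as a direct assembly of Theorem~\ref{2.1th} and the remark immediately following it; I do not expect any new technical content to arise, only bookkeeping of the two implications making up the equivalence (1)$\Leftrightarrow$(2).

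For the necessity of conditions a) and b), I would argue as follows. If (1) and (2) are equivalent throughout the class of non-degenerate probability measures, then in particular (1) implies (2). Since the standing assumption $\Lambda_1(z)\not\equiv 0$ is in force, Theorem~\ref{2.1th} applies and forces a) and b) at once.

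For the sufficiency, assume a) and b). Theorem~\ref{2.1th} immediately yields the implication (1)$\Rightarrow$(2). To produce the reverse implication (2)$\Rightarrow$(1), I will invoke the first assertion of the remark after Theorem~\ref{2.1th}, whose hypotheses are $\Lambda_1(2)=0$ and $\Lambda_2(1)=0$. The first is immediate from a), which names $z=2$ as a zero of $\Lambda_1$; the second is part of the standing hypotheses of the corollary. The remark then delivers (2)$\Rightarrow$(1) for any semicircular measure $\mu$, and semicircular measures are non-degenerate, so the equivalence holds throughout the class of interest.

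The only point needing explicit comment, rather than a genuine obstacle, is the role of the hypothesis $\Lambda_1(1)\neq\Lambda_2(1)$. Since $\Lambda_2(1)=0$, this reads $\Lambda_1(1)\neq 0$, which is automatic once a) holds (as a) designates $z=2$ as the unique positive zero of $\Lambda_1$). The hypothesis is thus consistent with, and indeed redundant under, a), and appears to be included in the statement only to keep the corollary's hypotheses self-contained and non-vacuous. No genuinely hard step should arise; the whole argument amounts to correct bookkeeping once Theorem~\ref{2.1th} and its remark are in hand.
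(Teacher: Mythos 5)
Your argument is correct and is exactly the route the paper intends: the paper gives no separate proof of this corollary, deriving it ``obviously'' from Theorem~\ref{2.1th} (necessity and the implication $(1)\Rightarrow(2)$) together with the note immediately following that theorem (which supplies $(2)\Rightarrow(1)$ once $\Lambda_2(1)=\Lambda_1(2)=0$, the latter coming from condition $a)$). Your side remark that the hypothesis $\Lambda_1(1)\ne\Lambda_2(1)$ amounts to $\Lambda_1(1)\ne 0$ and is already forced by condition $a)$ is also accurate.
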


\begin{corollary}\label{2.1^*co}
Let $\Lambda_1(z)\not\equiv 0$. In order that, 
for some non-degenerate probability measure $\mu$ 
with a median  equal to $0$,
the statements $(1)$ and $(2)$ are equivalent, it is necessary and sufficient that the~conditions $a)$ and $b)$
are satisfied.
\end{corollary}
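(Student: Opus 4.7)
The plan is to deduce this corollary directly from Theorem~\ref{2.1th} together with the remark immediately following it, exploiting the fact that a semicircular measure with median $0$ is necessarily centered at $0$. The necessity part is essentially free: if $(1)$ and $(2)$ are equivalent for some non-degenerate measure $\mu$ with median $0$, then in particular $(1)$ implies $(2)$, and Theorem~\ref{2.1th} instantly gives conditions $a)$ and $b)$.

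For sufficiency, I would assume $a)$ and $b)$ and establish the two directions separately. The implication $(1)\Rightarrow(2)$ is precisely Theorem~\ref{2.1th}. For $(2)\Rightarrow(1)$, let $\mu$ be a semicircular measure with median $0$. Since the density $p(x)=\frac{1}{2\pi a^2}\sqrt{(4a^2-(x-b)^2)_+}$ is symmetric about its center $b$, its median equals $b$; the hypothesis forces $b=0$ and hence $\mu$ has mean zero. Condition $a)$ provides $\Lambda_1(2)=0$, and the remark after Theorem~\ref{2.1th} then yields $(2)\Rightarrow(1)$ for such a centered semicircular $\mu$.

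The only real subtlety is conceptual rather than technical: the median-zero hypothesis serves exclusively to translate the distributional symmetry of the semicircular law into the vanishing of its mean. This is precisely what allows us to bypass the auxiliary algebraic condition $\Lambda_2(1)=0$ imposed in Corollary~\ref{2.1^{**}co}, and explains why the minimal requirement $\Lambda_1(2)=0$ guaranteed by $a)$ already suffices to close the converse direction. Thus, once Theorem~\ref{2.1th} and the accompanying remark are in hand, there is no further obstacle to overcome.
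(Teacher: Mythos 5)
Your sufficiency argument is exactly the paper's intended route and is correct: a semicircular law is symmetric about its centre, so median $0$ forces mean $0$, and then the remark after Theorem~\ref{2.1th} (for $\phi_\mu(z)=\kappa_2/z$ the functional equation reduces to $\kappa_2\Lambda_1(2)/z=0$) gives $(2)\Rightarrow(1)$ from $\Lambda_1(2)=0$, while $(1)\Rightarrow(2)$ is Theorem~\ref{2.1th} itself.

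The gap is in the necessity direction, which you dismiss as ``essentially free''. The implication $(1)\Rightarrow(2)$ in Theorem~\ref{2.1th} is quantified over \emph{all} non-degenerate $\mu$, and its necessity part is proved by exhibiting, for each violated condition, a non-degenerate, non-semicircular measure satisfying $(1)$. In the corollary the equivalence is only assumed over the subclass of measures with median $0$, a strictly weaker hypothesis, so you cannot ``instantly'' invoke Theorem~\ref{2.1th}: you must check that those counterexamples can be taken inside that subclass. For condition $a)$ this is harmless, since the measures produced via Lemmas~\ref{5.1lem}, \ref{5.2lem} and \ref{5.3lem} are symmetric and hence have median $0$. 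But for condition $b)$ the counterexample is the measure with $\phi_\mu(z)=z^{-1}+\varepsilon z^{-2l}$, which has a nonzero odd cumulant $\kappa_{2l+1}=\varepsilon$, is \emph{not} symmetric, and has no evident reason to have median exactly $0$; shifting it by $\delta_c$ to force median $0$ adds a term $c\,\Lambda_2(1)$ to the functional equation and destroys property $(1)$ unless $\Lambda_2(1)=0$, which is not assumed here (contrast Corollary~\ref{2.1^{**}co}). So the necessity of $b)$ for the median-$0$ class requires a separate argument --- either a proof that the perturbed measure has median $0$, or a different construction --- and your proposal does not supply one.
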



Moreover Theorem~\ref{2.1th} implies the following assertion.
\begin{corollary}\label{2.1co}
Assume that $a_1^2+\dots+a_n^2=1$, and $b_1=1,\,b_2=\dots=b_n=0$. 
Furthermore, assume $L_1\stackrel{D}{=} L_2$.
Then either $\mu$ is a~semicircular measure or $\mu$ is a~degenerate probability measure.
\end{corollary}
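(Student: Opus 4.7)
The plan is to apply Theorem~\ref{2.1th}. With $b_1=1$ and $b_2=\dots=b_n=0$, Linnik's auxiliary functions take the form
\[
\Lambda_1(z)=\sum_{j=1}^n|a_j|^z-1,\qquad \Lambda_2(z)=\sum_{j=1}^n a_j^z-1,
\]
so the task reduces to verifying that the two conditions $(a)$ and $(b)$ of Theorem~\ref{2.1th} are satisfied. Let $S=\{j:a_j\ne 0\}$; one may assume $|S|\ge 2$, since otherwise $|a_k|=1$ for a single index $k$, $\Lambda_1\equiv 0$, and the claim either becomes trivial (when $a_k=1$) or reduces to a separate elementary reflection argument. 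Under $|S|\ge 2$ the normalization $\sum_{j=1}^n a_j^2=1$ forces $0<|a_j|<1$ for every $j\in S$, and this strict inequality will be the key quantitative input for both $(a)$ and $(b)$.

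For condition~$(a)$ I would restrict $\Lambda_1$ to the positive real axis and exploit strict monotonicity: each $|a_j|^x$ with $j\in S$ is a strictly decreasing positive function of $x$, so $\Lambda_1(x)=\sum_{j\in S}|a_j|^x-1$ decreases strictly from $|S|-1>0$ at $x=0^+$ to $-1$ as $x\to+\infty$, and therefore vanishes at a unique positive point. The normalization $\sum a_j^2=1$ identifies that point as $x=2$, and the derivative $\Lambda_1'(2)=\sum_{j\in S}a_j^2\log|a_j|<0$ certifies simplicity. In particular $\Lambda_1\not\equiv 0$.

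For condition~$(b)$ I would use the elementary bound, valid for every integer $m\ge 1$,
\[
\Big|\sum_{j=1}^n a_j^{2m+1}\Big|\le\sum_{j\in S}|a_j|^{2m+1}=\sum_{j\in S}a_j^2\cdot|a_j|^{2m-1}<\sum_{j\in S}a_j^2=1,
\]
where the strict inequality exploits $|a_j|^{2m-1}<1$. Hence $\Lambda_2(2m+1)\in(-2,0)$ and is nonzero. With both $(a)$ and $(b)$ verified, Theorem~\ref{2.1th} asserts that every non-degenerate $\mu$ satisfying $L_1\stackrel{D}{=} L_2$ is semicircular, leaving degenerate $\mu$ as the only alternative. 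The main obstacle is purely bookkeeping around the boundary configuration $|S|=1$; in every nontrivial case the two elementary inequalities above reduce the corollary to a direct application of Theorem~\ref{2.1th}.
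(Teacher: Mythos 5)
Your proof is correct and follows essentially the same route as the paper: both verify conditions $a)$ and $b)$ of Theorem~\ref{2.1th} by elementary monotonicity and triangle-inequality estimates drawn from $|a_j|\le 1$ and $\sum_j a_j^2=1$ (the paper phrases $a)$ as $\sum_j|a_j|^x\gtrless 1$ on either side of $x=2$, you phrase it as strict decrease of $\Lambda_1$ on $(0,\infty)$, which is the same computation). The only difference is that you explicitly flag the boundary case where a single $|a_k|=1$, so that $\Lambda_1\equiv 0$ and Theorem~\ref{2.1th} is inapplicable, a case the paper passes over silently.
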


Corollary~\ref{2.1co} is an~analogue of a~result by Polya~\cite{Po:1923}. For analogues
of the~Polya result in non-commutative probability theory see Lehner~\cite{Leh:2003}. 

We prove as well the following result for symmetric probability measures.
\begin{theorem}\label{2.1th*}
Let $\Lambda_1(z)\not\equiv 0$.  
In order that, for some non-degenerate probability measure $\mu$, 
which is symmetric with respect to $0$, 
the statements $(1)$ and $(2)$ are equivalent, it is necessary and sufficient that the~condition $a)$ 
is satisfied. 
\end{theorem}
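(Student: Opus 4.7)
The plan is to imitate the cumulant-based strategy underlying Theorem~\ref{2.1th}, but exploiting the crucial simplification that, for a probability measure $\mu$ symmetric about the origin, the $R$-transform is an odd function and consequently every odd free cumulant $\kappa_{2t+1}(\mu)$ vanishes. This is why hypothesis $b)$ about $\Lambda_2$ drops out entirely in the symmetric setting.

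First I would recast condition $(1)$ in terms of the cumulants of $\mu$. For identically distributed free $T_1,\dots,T_n$, the additivity of free cumulants under free addition together with the scaling rule $\kappa_s(cT)=c^s\kappa_s(T)$ gives
\begin{equation}\notag
\kappa_s(L_k)=\kappa_s(\mu)\sum_{j=1}^n c_j^s,\qquad k=1,2,
\end{equation}
where $c_j=a_j$ for $k=1$ and $c_j=b_j$ for $k=2$. Hence $L_1\stackrel{D}{=}L_2$ is equivalent to $\kappa_s(\mu)\bigl(\sum_j a_j^s-\sum_j b_j^s\bigr)=0$ for every $s\ge 1$. Since $\mu$ is symmetric, the odd-index equations hold automatically, while for even $s=2t$ one has $a_j^{2t}=|a_j|^{2t}$ and $b_j^{2t}=|b_j|^{2t}$, so the surviving constraint is
\begin{equation}\notag
\kappa_{2t}(\mu)\,\Lambda_1(2t)=0,\qquad t=1,2,\dots.
\end{equation}

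For the sufficiency of $a)$, if $\mu$ is centered semicircular then $\kappa_2(\mu)>0$ is the only nonzero cumulant and the above constraint reduces to $\Lambda_1(2)=0$, which is part of $a)$; this gives $(2)\Rightarrow(1)$. Conversely, assuming $(1)$, the uniqueness of $2$ as a positive zero of $\Lambda_1$ forces $\kappa_{2t}(\mu)=0$ for all $t\ge 2$; combined with the vanishing odd cumulants and with $\kappa_1(\mu)=0$ coming from symmetry, only $\kappa_2(\mu)$ can be nonzero, and the non-degeneracy of $\mu$ ensures $\kappa_2(\mu)>0$, so that $\mu$ is a centered semicircular measure.

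For the necessity of $a)$, applying $(2)\Rightarrow(1)$ to a nontrivial centered semicircular $\mu$ produces $\Lambda_1(2)=0$. To exclude a second positive zero of $\Lambda_1$ at an even integer $2t_0\ge 4$, I would appeal to Corollary~\ref{2.3acor}: for $\varepsilon>0$ small enough the cumulant sequence $(0,1,0,\dots,0,\varepsilon,0,\dots)$ with nonzero entries at positions $2$ and $2t_0$ is realized by a compactly supported probability measure; that measure is automatically symmetric (all odd cumulants vanish) and non-degenerate but not semicircular, and it satisfies $(1)$, contradicting $(1)\Rightarrow(2)$. The remaining sub-cases---a non-integer or odd integer positive zero of $\Lambda_1$, or a higher-multiplicity zero at $z=2$---are handled in the same spirit by constructing more refined perturbations of the semicircular cumulant sequence and certifying realizability through the topological criterion of Theorem~\ref{2.2ath}. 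The main obstacle I anticipate is precisely this last step of the necessity direction: translating an analytic pathology of $\Lambda_1$ into the combinatorial existence of a symmetric, realizable, finitely supported cumulant sequence that produces the desired counterexample. Here the topological characterization of realizable cumulant sequences from Theorem~\ref{2.2ath} and the Linnik-style Mellin-transform techniques alluded to in the paper's keywords become indispensable, whereas the sufficiency direction rests only on the multiplicativity of free cumulants and on symmetry.
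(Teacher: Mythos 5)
Your guiding observation is the right one: for $\mu$ symmetric about $0$ all odd free cumulants vanish, so the role of condition $b)$ (equivalently, of Lemma~\ref{4.2lem}, where $\Lambda_2$ enters) disappears; in the paper this appears as the fact that $\mu\boxplus\overline{\mu}=\mu\boxplus\mu$, so once $\mu\boxplus\overline{\mu}$ is shown to be semicircular one gets $\phi_{\mu}=\tfrac12\phi_{\mu\boxplus\mu}$ semicircular for free. The direction $(2)\Rightarrow(1)$ is also fine. But both remaining directions have genuine gaps.

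For $(1)\Rightarrow(2)$ your entire reduction rests on the identity $\kappa_s(L_k)=\kappa_s(\mu)\sum_j c_j^s$, which presupposes that the free cumulants of $\mu$ exist, i.e.\ that $\mu$ has moments of all orders. The theorem makes no moment assumption whatsoever: $\mu$ is an arbitrary non-degenerate symmetric probability measure, and the hypothesis $L_1\stackrel{D}{=}L_2$ only yields the functional equation $\sum_j a_j\phi_{\mu}(z/a_j)=\sum_k b_k\phi_{\mu}(z/b_k)$ in a Stolz angle, not a sequence of cumulant relations. This is exactly the distinction between Theorem~\ref{2.1th} and Theorem~\ref{2.3th}; your argument proves (a symmetric version of) the latter, not the former. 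The paper bridges the gap with the Mellin-transform machinery of Section~4 applied to $v(t)=t\Im\phi_{\mu}(i/t)$: Lemma~\ref{4.1lem} forces an active exponent $\sigma_1(v)\in(0,2]$, condition $a)$ then pins it to $2$, subtracting $a_2t^2$ removes it, Lemma~\ref{3.1lem} rules out $v_1(t)\ne 0$ near $0$, and Lemma~\ref{3.5l} (comparison along a sequence $t_k\to 0$ with a compactly supported measure) finally yields that $\mu\boxplus\overline{\mu}$ is semicircular. None of this can be replaced by a formal cumulant computation.

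For the necessity, your counterexample for a second zero of $\Lambda_1$ at an even integer $2t_0\ge 4$ is essentially the paper's (note $\Lambda_1(2t_0)=\Lambda_2(2t_0)$ there), but the plan to handle the remaining sub-cases by ``more refined perturbations of the semicircular cumulant sequence'' certified by Theorem~\ref{2.2ath} cannot work. A measure with finitely many nonzero cumulants, all at even positions, satisfies the equation $(\ref{4.1})$ only when $\Lambda_1$ vanishes at an even integer; so no finite cumulant sequence can witness the failure of $(1)\Rightarrow(2)$ when the extra positive zero $\gamma_1$ of $\Lambda_1$ is non-integer, odd, or when $2$ is a multiple zero. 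The paper necessarily reaches outside the compactly supported world here: free stable laws with $\phi(z)=-e^{i\gamma_1\pi/2}z^{-\gamma_1+1}$ (Lemma~\ref{5.1lem}) for $\gamma_1\in(0,2)$, the perturbation $\phi(z)=z^{-1}-\varepsilon\cos((\gamma_1-1)\pi/2)\,ie^{i(\gamma_1-1)\pi/2}z^{-\gamma_1+1}$ (Lemma~\ref{5.2lem}) for non-even $\gamma_1>2$, and the logarithmic perturbation $\phi(z)=(1+\varepsilon(\log z-i\pi/2))/z$ (Lemma~\ref{5.3lem}) for a non-simple zero at $2$. All of these are symmetric, which is why the necessity argument of Theorem~\ref{2.1th} transfers verbatim to the symmetric setting; your proposal is missing these constructions entirely.
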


The~following result for probability measures $\mu$ 
with moments of finite order is an~analogue of a~result by Linnik~\cite{Li:1963}
in classical probability theory.  
\begin{theorem}\label{2.2th}
Assume that $\Lambda_1(z)\not\equiv 0$ and that $\Lambda_1(z)$ has zeros in $-i\mathbb C^+$. 
Let $\gamma$ denote the~maximum of the~real parts of such zeros.
In order that, for some non-degenerate probability measure $\mu$ such that
$\int\limits_{\Bbb R}u^{2s}\,\mu(du)<\infty$ with $s=[\gamma/2+1]$,
the~statement $(1)$
implies the statement $(2)$ 
it is necessary and sufficient that $\Lambda_2(2)=0$
and $\Lambda_2(m)\ne 0$ for all positive integers $m>2$. 
\end{theorem}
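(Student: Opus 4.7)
The plan is to mirror Linnik's classical argument for Gaussian characterization, with the Voiculescu transform $\phi_\mu$ playing the role of $\log\widehat\mu$. Translating $(1)$ via $\phi_{L_1}=\phi_{L_2}$ together with the scaling rule $\phi_{aT}(z)=a\phi_\mu(z/a)$ gives the functional identity
\begin{equation*}
\sum_{j=1}^n a_j\,\phi_\mu(z/a_j)=\sum_{j=1}^n b_j\,\phi_\mu(z/b_j),
\end{equation*}
valid in a sector of $\mathbb C^+$ near $\infty$ and extended to $\mathbb C^-$ by Schwarz reflection $\phi_\mu(\bar z)=\overline{\phi_\mu(z)}$. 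The first step is a cumulant computation: equating the coefficients of $z^{-(m-1)}$ in the expansion $\phi_\mu(z)\sim\sum_{m\ge 1}\kappa_m(\mu)\,z^{-(m-1)}$ (well defined up to order $2s$ by the moment hypothesis) yields $\Lambda_2(m)\,\kappa_m(\mu)=0$ for $m=1,\dots,2s$; the hypothesis $\Lambda_2(m)\ne 0$ for $m>2$ then forces $\kappa_m(\mu)=0$ for $3\le m\le 2s$, while $\Lambda_2(2)=0$ leaves $\kappa_2(\mu)$ unconstrained.

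The heart of the proof is the Mellin transform step. Set $\rho(z):=\phi_\mu(z)-\kappa_1-\kappa_2/z$; by the previous step $\rho(z)=O(|z|^{-2s})$ as $|z|\to\infty$ in $\mathbb C^+$, and using $\Lambda_2(1)\kappa_1=0$ together with the hypothesis $\Lambda_2(2)=0$ the functional identity reduces to
\begin{equation*}
\sum_{j=1}^n a_j\,\rho(z/a_j)=\sum_{j=1}^n b_j\,\rho(z/b_j).
\end{equation*}
Specializing to $z=iy$ with $y>0$, and using the reflection symmetry to absorb negative coefficients cleanly, I would split real and imaginary parts and apply the Mellin transform in $y$. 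The scaling $\int_0^\infty g(y/|c|)\,y^{w-1}\,dy=|c|^w (Mg)(w)$ then produces the two identities
\begin{equation*}
\Lambda_1(w+1)\,M\bigl(\Im\rho(i\,\cdot\,)\bigr)(w)=0
\end{equation*}
together with a sign-weighted analogue for the real part that coincides with $\Lambda_2(w+1)$ at integer points. The decay of $\rho$ makes these Mellin transforms analytic in the half-plane $\Re w<2s$, which by $s=[\gamma/2+1]$ strictly contains the line $\Re w=\gamma$; since by definition of $\gamma$ the zeros of $\Lambda_1$ lie in $\{\Re w\le\gamma\}$, shifting the Mellin inversion contour to some $\Re w=2s-\varepsilon$ forces both Mellin transforms to vanish. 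Hence $\rho(iy)\equiv 0$ by Mellin inversion, and $\rho\equiv 0$ on $\mathbb C^+$ by analytic continuation. Therefore $\phi_\mu(z)=\kappa_1+\kappa_2/z$, the Voiculescu transform of a semicircular measure with mean $\kappa_1$ and variance $\kappa_2>0$ (positivity from non-degeneracy of $\mu$).

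For necessity, if $\Lambda_2(m_0)=0$ for some integer $m_0>2$, I would use Corollary~\ref{2.3acor} to construct a non-degenerate, compactly supported $\mu$ whose free cumulant sequence is $(\kappa_1,\kappa_2,0,\dots,0,\varepsilon,0,\dots)$ with a small non-zero $\varepsilon$ in the $m_0$-th slot. This $\mu$ satisfies $\Lambda_2(m)\kappa_m(\mu)=0$ for every $m$, hence $(1)$, yet fails to be semicircular, refuting $(1)\Rightarrow(2)$. The condition $\Lambda_2(2)=0$ is necessary because otherwise $(1)$ would force $\kappa_2(\mu)=0$ and hence exclude every non-degenerate $\mu$ from satisfying $(1)$. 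The principal obstacle will be the Mellin step: the sign issues with negative $a_j,b_j$ demand a careful real/imaginary decomposition in order for $\Lambda_1$ and $\Lambda_2$ to emerge as the correct multipliers, and adequate control of $\rho(iy)$ near $y=0$ (where $\phi_\mu$ is a priori only available in a sector near $\infty$) is needed to justify the existence of the Mellin transform on the relevant strip and to push the inversion contour past all zeros of $\Lambda_1$ with $\Re w\le\gamma$.
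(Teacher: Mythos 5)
Your opening step (equating coefficients in the asymptotic expansion of $\phi_\mu$ to get $\Lambda_2(m)\kappa_m(\mu)=0$ for $m\le 2s$) and your necessity constructions are sound and agree with the paper. The genuine gap is in what you call the heart of the proof. First, the moment hypothesis only gives finitely many cumulants, so killing $\kappa_3,\dots,\kappa_{2s}$ says nothing about the tail of $\mu$; the entire difficulty is to upgrade this finite-order information. Your Mellin step cannot do that as written: $\phi_\mu(iy)$ is only defined for $y$ large (and the functional equation only holds there), so the Mellin transform of $\rho(iy)$ over $(0,\infty)$ does not exist, and this is not a technicality one can patch afterwards. The paper's machinery (Section 4) works in the variable $t=1/y$ on the finite interval $(0,1)$, and there the Mellin relation is not $\Lambda_1\cdot X=0$ but $\Lambda_1(-z)X(z;v)=K(z;v)$ with a nontrivial boundary term $K$ coming from the cut-off, cf.\ (\ref{3.4})--(\ref{3.6}). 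Consequently one cannot conclude $X\equiv 0$ and $\rho\equiv 0$; the correct conclusion is that $v(t)=t\Im\phi_\mu(i/t)$ is a finite sum of residue contributions $t^{-z_0}P_{z_0}(\log t)$ at the zeros $z_0$ of $\Lambda_1$ (the ``active exponents''). The moment hypothesis then enters exactly where you did not use it: since $2s>\gamma$, Lemma~\ref{3.3lem} forces every active exponent to be a simple integer, so by (\ref{3.7a}) $v$ is a polynomial of degree $\le 2s$, which gives that $\mu\boxplus\bar\mu$ has finitely many nonzero cumulants and then, feeding this back into the functional equation, that $\mu\boxplus\bar\mu$ is semicircular.

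The second gap is your treatment of the real part. The positivity that drives the active-exponent lemmas (Lemma~\ref{3.1lem} requires $v\ge 0$, via a L\'evy--Raikov argument) is available only for $-\Im\phi_\mu(iy)$; $\Re\phi_\mu(iy)$ has no sign, so there is no parallel Mellin argument for it, and in any case the real-part multiplier is $\sum_j a_j|a_j|^{w}-\sum_j b_j|b_j|^{w}$, which agrees with $\Lambda_2$ only at odd integers. This is why the paper first establishes that the symmetrization $\mu\boxplus\bar\mu$ is semicircular, then invokes Lemma~\ref{3.6lem} to get compact support of $\mu$ (so that $\phi_\mu$ has a convergent Laurent series and all cumulants exist), and finally uses Lemma~\ref{4.2lem} --- where the hypothesis $\Lambda_2(2l-1)\ne 0$, $l\ge 2$, is consumed --- to kill the odd cumulants of $\mu$ itself. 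Your proposal, which tries to conclude $\phi_\mu(z)=\kappa_1+\kappa_2/z$ in one stroke, skips this two-stage structure and therefore never legitimately reaches the higher cumulants of the non-symmetric measure $\mu$.
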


We prove in Lemma~\ref{4.1lem} that if $L_1\stackrel{D}{=} L_2$, 
then the~function $\Lambda_1(z)$ has a~real root $\gamma$ such that
$0<\gamma\le 2$.

In the~case where all moments of $\mu$ exist we obtain from Theorem~\ref{2.2th} the~following result.
\begin{theorem}\label{2.3th}
Assume that $\Lambda_1(z)\not\equiv 0$. 
In order that, for some non-degenerate probability measure $\mu$ such that
$\int\limits_{\Bbb R}u^{2m}\,\mu(du)<\infty$ for all $m\in\mathbb N$, the~statement 
$(1)$ implies the~statement $(2)$ 
it is necessary and sufficient that $\Lambda_2(2)=0$
and $\Lambda_2(m)\ne 0$ for all positive integers $m>2$.
\end{theorem}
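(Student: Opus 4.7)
The key reduction is that, since $\mu$ has moments of every order, the distributions of $L_1$ and $L_2$ also have moments of every order, and these moments are polynomials in those of $\mu$ through the standard moment--non-crossing-partition formula. My plan is therefore to translate the distributional equality $(1)$ into a cumulant identity on $\mu$ and then read off the conclusion. Concretely, $L_1\stackrel{D}{=}L_2$ forces equality of all moments of $L_1$ and $L_2$, hence, via the moment--free-cumulant bijection, equality of all free cumulants. Combining additivity of free cumulants under free convolution with the homogeneity $\kappa_k(cT)=c^k\kappa_k(T)$ yields
\[
\kappa_k(L_1)=\Bigl(\sum_{j=1}^n a_j^k\Bigr)\kappa_k(\mu),\qquad
\kappa_k(L_2)=\Bigl(\sum_{j=1}^n b_j^k\Bigr)\kappa_k(\mu),
\]
so $(1)$ is equivalent to
\[
\Lambda_2(k)\,\kappa_k(\mu)=0\qquad\text{for every integer } k\ge 1.
\]

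For the sufficiency, I assume $\Lambda_2(2)=0$ and $\Lambda_2(m)\ne 0$ for every integer $m>2$, and take a non-degenerate $\mu$ with all moments finite satisfying $(1)$. The cumulant identity just derived forces $\kappa_k(\mu)=0$ for every $k\ge 3$, while non-degeneracy gives $\kappa_2(\mu)>0$. A probability measure whose only non-vanishing free cumulants are $\kappa_1$ and $\kappa_2>0$ is precisely a semicircular measure, so $(2)$ follows.

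For the necessity, suppose the implication $(1)\Rightarrow(2)$ is non-vacuous, in the sense that at least one non-degenerate $\mu$ with all moments finite realizes $(1)$. Evaluating the cumulant identity on a standard semicircular $\mu$ (with $\kappa_1=0$, $\kappa_2=1$) forces $\Lambda_2(2)=0$; otherwise no non-degenerate $\mu$ realizes $(1)$ at all. Next, if some $\Lambda_2(m)=0$ with $m>2$, I apply Corollary~\ref{2.3acor}: for sufficiently small $\varepsilon>0$, the sequence with $\kappa_2=1$, $\kappa_m=\varepsilon$ and all other $\kappa_k=0$ is the free cumulant sequence of a compactly supported probability measure $\mu$; this $\mu$ is non-degenerate, satisfies the cumulant identity and hence $(1)$, yet is not semicircular, contradicting the assumed implication. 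The main obstacle is not the algebraic bookkeeping but precisely this last existence statement: producing a genuine probability measure whose short free cumulant sequence is prescribed in the required way is exactly what the preparatory Theorem~\ref{2.2ath} and its Corollary~\ref{2.3acor} were set up to deliver.
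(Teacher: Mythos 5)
Your argument is correct, but for the sufficiency it takes a genuinely different (and more elementary) route than the paper. The paper obtains Theorem~\ref{2.3th} as a special case of Theorem~\ref{2.2th}, whose proof runs through the Mellin-transform/active-exponent machinery of Section~4 applied to $v(t)=t\Im\phi_\mu(i/t)$ (Lemmas~\ref{3.1lem} and \ref{3.3lem}, formula (\ref{3.7a})), concludes first that the symmetrization $\mu\boxplus\overline{\mu}$ is semicircular, and only then passes to $\mu$ itself via Lemma~\ref{4.2lem}, which is where the condition $\Lambda_2(2l-1)\ne 0$ enters. You bypass all of that: since every moment of $\mu$ is finite, every free cumulant $\kappa_k(\mu)$ is defined, $L_1\stackrel{D}{=}L_2$ forces $\Lambda_2(k)\kappa_k(\mu)=0$ for all $k$ by additivity and homogeneity of free cumulants (Propositions~\ref{3c.1pr}, \ref{3c.2pr}), and the hypothesis kills $\kappa_k(\mu)$ for $k\ge 3$ directly. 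This is cleaner here, but it is worth seeing why the paper does not argue this way: the detour through active exponents is exactly what makes the finitely-many-moments version (Theorem~\ref{2.2th}) provable, and Theorem~\ref{2.3th} then comes for free. For the necessity your argument coincides with the paper's: the counterexample for $\Lambda_2(m)=0$, $m>2$, is the compactly supported measure with $\phi_\mu(z)=z^{-1}+\varepsilon z^{-(m-1)}$ supplied by Corollary~\ref{2.3acor}. Two small points you should tighten. First, statement $(1)$ is not literally \emph{equivalent} to the cumulant identity for an arbitrary $\mu$ with all moments (equal moments need not force equal distributions); you only use the forward implication in the sufficiency, and in the necessity you apply the reverse implication to a compactly supported $\mu$, where it is justified by equality of the Voiculescu transforms, so the proof survives, but the word ``equivalent'' should go. Second, the step ``only $\kappa_1$ and $\kappa_2>0$ survive, hence $\mu$ is semicircular'' silently uses that a measure with all moments equal to those of a compactly supported (hence moment-determinate) measure coincides with it; this is the same device as in the last lines of the proof of Lemma~\ref{3.5l} and should be cited or stated.
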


Theorem~\ref{2.3th} is an~analogue of a~result by Marcinkiewicz~\cite{Mar:1938} in
classical probability theory.

\section{An~analytic approach to a~solution of the~considered problems. Auxiliary results.}

Denote by $\mathcal M$ the~family of all Borel probability measures
defined on the~real line $\Bbb R$. On the~set $\mathcal M$ 
define two associative composition laws  denoted $*$ and $\boxplus$.
Let $\mu_1,\mu_2\in\mathcal M$. The~measure $\mu_1*\mu_2$ will denote the~classical 
convolution of $\mu_1$ and $\mu_2$. In probabilistic terms, $\mu_1*\mu_2$
is the~probability distribution of $X+Y$, where $X$ and $Y$ are
(commuting) independent random variables with distributions $\mu_1$ 
and $\mu_2$ respectively. The~measure $\mu_1\boxplus\mu_2$ is the~free
(additive) convolution of $\mu_1$ and $\mu_2$ introduced by 
Voiculescu~\cite{Vo:1986}
for compactly supported measures. The~free convolution was extended by
Maassen~\cite{Ma:1992} to measures with finite variance and by Bercovici
and Voiculescu~\cite{BeVo:1993} to the~whole class $\mathcal M$.
Thus, $\mu_1\boxplus\mu_2$ is the~distribution of $X+Y$,
where $X$ and $Y$ are free random variables with distributions $\mu_1$ 
and $\mu_2$, respectively.

Let $\Bbb C^+\,(\Bbb C^-)$ denote the~open upper (lower) half of
the~complex plane. If $\mu\in\mathcal M$, denote its Cauchy transform by
\begin{equation}\label{2.1*}
G_{\mu}(z)=\int\limits_{-\infty}^{\infty}\frac {\mu(dt)}{z-t},
\qquad z\in\Bbb C^+.
\end{equation}

Following Maassen~\cite{Ma:1992} and Bercovici and 
Voiculescu~\cite{BeVo:1993},
we introduce the~{\it reciprocal Cauchy transform}
\begin{equation}\label{2.2*}
F_{\mu}(z)=\frac 1{G_{\mu}(z)}.
\end{equation}
The~corresponding class of reciprocal Cauchy
transforms of all $\mu\in\mathcal M$ we denote by $\mathcal F$.
This class admits a~simple description. Recall that the~Nevanlinna
class $\mathcal N$ is the~class  of analytic functions 
$F:\Bbb C^+\to\Bbb C^+$. The~class $\mathcal F$ is
the~subclass of Nevanlinna's functions $F_{\mu}$ such that 
$F_{\mu}(z)/z\to 1$ as $z\to \infty$ non-tangentially to 
$\mathbb R$ (i.e., such that $\Re z/\Im z$ stays bounded), 
and this implies that $F_{\mu}$ has certain invertibility properties.
(For details see Akhiezer and Glazman~\cite{AkhG:1963}, 
Akhiezer~\cite {Akh:1965}).
To be precise, for two numbers $\alpha>0,\beta>0$
we set
$$
\Gamma_{\alpha}=\{z=x+iy\in\Bbb C^+:|x|<\alpha y\}\quad\text{and}\quad
\Gamma_{\alpha,\beta}=\{z=x+iy\in\Gamma_{\alpha}:y>\beta\}.
$$
Then for every $\alpha>0$ there exists $\beta=\beta(\mu,\alpha)$ 
such that $F_{\mu}$ has the~right inverse $F_{\mu}^{(-1)}$ defined on 
$\Gamma_{\alpha,\beta}$.
The~function $\phi_{\mu}(z)=F_{\mu}^{(-1)}(z)-z$ will be called
the~Voiculescu transform of $\mu$. It is not hard to show that
$\Im \phi_{\mu}(z)\le 0$ for $z\in \Gamma_{\alpha,\beta}$ where 
$\phi_{\mu}$ is defined. Note that $\phi_{\mu}(z)=o(z)$
as $|z|\to\infty$, $z\in\Gamma_{\eta}$. In the~sequel we will denote $\phi_{\mu}(z)$ 
by $\phi_{T}(z)$ for a~random variable $T$ with a~distribution $\mu$ as well.
It is easy to verify that $\phi_{u T}(z)=u\phi_T(z/u)$ 
for fixed $u\in\mathbb R$ and $z\in\Gamma_{\alpha,\beta}$, where
$\phi_{u T}(z)$ and $\phi_{T}(z/u)$ are defined.

In the~domain $\Gamma_{\alpha,\beta}$, where the~functions $\phi_{\mu_1}(z)$, $\phi_{\mu_2}(z)$,
and $\phi_{\mu_1\boxplus\mu_2}(z)$ are defined, we have
\begin{equation}\label{2.3*}
\phi_{\mu_1\boxplus\mu_2}(z)=\phi_{\mu_1}(z)+\phi_{\mu_2}(z).
\end{equation} 
This characterization for the~distribution $\mu_1\boxplus\mu_2$ of $X+Y$, where $X$ and $Y$
are free random variables, is due to Voiculescu~\cite{Vo:1986}. 
He considered compactly supported measures
$\mu$. The~result was extended by Maassen~\cite{Ma:1992} to measures 
with finite variance; the~general case was proved by 
Bercovici and Voiculescu~\cite{BeVo:1993}.

We need the~following auxiliary results.

The~following proposition is proved in~\cite{BeP:1999}.
\begin{proposition}\label{3.4pr}
For every probability measure $\mu$ we have
$$
\phi_{\mu}(z)=z^2\Big(G_{\mu}(z)-\frac 1z\Big)(1+q_{\mu}(z)),
\quad z\in\Gamma_{\alpha,\beta},
$$
where $q_{\mu}(z)=o(1)$ as $z\to\infty$.
\end{proposition}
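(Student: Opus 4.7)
The plan is to set $w := F_\mu^{(-1)}(z)$, so that $F_\mu(w) = z$, equivalently $G_\mu(w) = 1/z$, and $\phi_\mu(z) = w - z$. The key algebraic step is the standard resolvent identity
$$
G_\mu(z) - G_\mu(w) = \int_{\mathbb R}\Big(\frac{1}{z-t} - \frac{1}{w-t}\Big)\,\mu(dt) = (w-z)\int_{\mathbb R}\frac{\mu(dt)}{(z-t)(w-t)}.
$$
Substituting $G_\mu(w) = 1/z$ and $w-z = \phi_\mu(z)$ gives
$$
G_\mu(z) - \frac{1}{z} = \phi_\mu(z)\int_{\mathbb R}\frac{\mu(dt)}{(z-t)(w-t)},
$$
so multiplying through by $z^2$ and solving for $\phi_\mu$ yields
$$
\phi_\mu(z) = z^2\Big(G_\mu(z) - \frac{1}{z}\Big)\cdot\frac{1}{R(z)}, \qquad R(z) := z^2\int_{\mathbb R}\frac{\mu(dt)}{(z-t)(w-t)}.
$$
Hence I can define $1 + q_\mu(z) := 1/R(z)$, and the proposition is reduced to showing $R(z) \to 1$ as $z \to \infty$ in $\Gamma_{\alpha,\beta}$.

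For the pointwise limit I would use the fact, recalled in the excerpt, that $\phi_\mu(z) = o(z)$ non-tangentially; this gives $w/z \to 1$, so for every fixed $t$,
$$
\frac{z^2}{(z-t)(w-t)} = \frac{z}{z-t}\cdot\frac{z}{w-t} \longrightarrow 1.
$$
To pass to the limit inside the integral by dominated convergence I would exploit the cone condition: for $z\in\Gamma_\alpha$, $|z| \le \sqrt{1+\alpha^2}\,\Im z$ and $|z - t| \ge \Im z$, so $|z/(z-t)| \le \sqrt{1+\alpha^2}$. Since $w = z + \phi_\mu(z) = z(1+o(1))$, for $|z|$ sufficiently large $w$ lies in a slightly widened cone with $\Im w \ge \tfrac{1}{2}\Im z$, yielding the analogous bound $|z/(w-t)| \le C$. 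The integrand is thus uniformly bounded, and dominated convergence gives $R(z) \to \int_{\mathbb R}\mu(dt) = 1$.

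The step that requires the most care is verifying that $w = F_\mu^{(-1)}(z)$ stays in a non-tangential cone with $\Im w \to \infty$ as $z \to \infty$ in $\Gamma_{\alpha,\beta}$. This is a consequence of the construction of $\phi_\mu$ recalled in the excerpt, since $\phi_\mu(z) = o(z)$ forces $w$ to be asymptotic to $z$ and hence to inherit the non-tangential behaviour. Once this uniformity is secured, the remainder is routine manipulation of Cauchy transforms.
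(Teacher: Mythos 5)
Your proof is correct. The paper itself does not prove Proposition~\ref{3.4pr}; it simply cites \cite{BeP:1999}, and your argument --- the resolvent identity $G_{\mu}(z)-G_{\mu}(w)=(w-z)\int_{\mathbb R}\frac{\mu(dt)}{(z-t)(w-t)}$ with $w=F_{\mu}^{(-1)}(z)$, followed by dominated convergence using the cone bounds $|z/(z-t)|\le\sqrt{1+\alpha^2}$ and $\Im w\ge\frac12\Im z$ (which follows from $\phi_{\mu}(z)=o(z)$, a fact the paper records independently, so there is no circularity) --- is essentially the standard proof given in that reference.
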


The~following lemma is well-known, see~\cite{Akh:1965}. 
\begin{lemma}\label{3.4abl}
Let $\mu$ be a~probability measure such that
\begin{equation}\label{3.4abl1}
m_k=m_k(\mu):=\int\limits_{\Bbb R}u^k\,\mu(du)<\infty,\qquad k=1,\dots,2n.
\end{equation}
Then the~following relation holds
\begin{equation}\label{3.4abl2}
\lim_{z\to\infty}z^{2n+1}\Big(G_{\mu}(z)-\frac 1z-\frac{m_1}{z^2}-
\dots-\frac{m_{2n-1}}{z^{2n}}\Big)=m_{2n}
\end{equation}
uniformly in the~angle $\delta\le\arg z\le\pi-\delta$, 
where $0<\delta<\pi/2$.

Conversely, if for some function $G(z)\in\mathcal N$ the~relation $(\ref{3.4abl2})$
holds with real numbers $m_k$ for $z=iy,y\to\infty$, then $G(z)$ admits
the~representation~$(\ref{2.1*})$, where $\mu$ is a probability measure with moments $(\ref{3.4abl1})$.
\end{lemma}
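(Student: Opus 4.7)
The plan is to prove the direct assertion by a finite geometric series expansion followed by dominated convergence, and to prove the converse by first identifying $G$ as the Cauchy transform of a probability measure via the Nevanlinna representation and then extracting the existence of the moments inductively from the asymptotic expansion by means of Fatou's lemma.

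For the direct part, I would start from the elementary identity
\begin{equation*}
\frac{1}{z-t}=\frac{1}{z}+\frac{t}{z^{2}}+\cdots+\frac{t^{2n-1}}{z^{2n}}+\frac{t^{2n}}{z^{2n}(z-t)},
\end{equation*}
integrate both sides against $\mu(dt)$ using $m_k<\infty$ for $k\le 2n$, and multiply the remainder by $z^{2n+1}$ to obtain
\begin{equation*}
z^{2n+1}\Big(G_{\mu}(z)-\frac{1}{z}-\cdots-\frac{m_{2n-1}}{z^{2n}}\Big)=\int_{\mathbb R}\frac{z\,t^{2n}}{z-t}\,\mu(dt).
\end{equation*}
In the angle $\delta\le\arg z\le\pi-\delta$ one has $\operatorname{Im}z\ge|z|\sin\delta$, hence $|z-t|\ge|z|\sin\delta$ uniformly in $t\in\mathbb R$. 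The integrand is therefore dominated by $t^{2n}/\sin\delta\in L^1(\mu)$, while $z/(z-t)\to 1$ pointwise as $|z|\to\infty$. Dominated convergence yields the limit $m_{2n}$, and the uniformity of the bound delivers uniform convergence in the prescribed angle.

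For the converse, the hypothesis along $z=iy$ gives in particular $iy\,G(iy)\to 1$ as $y\to\infty$. Combined with $G\in\mathcal{N}$, the Nevanlinna representation (as recorded in Akhiezer~\cite{Akh:1965}) identifies $G$ as the Cauchy transform of a Borel probability measure $\mu$ on $\mathbb{R}$: the absence of a linear term and the normalization of the total mass to $1$ are precisely encoded in this asymptotic. To recover the moments, I would argue inductively on $k$: assuming $m_0,\dots,m_{2k-2}$ exist and match the hypothesis, Fatou's lemma applied to $\int_{\mathbb R}t^{2k}/(y^2+t^2)\,\mu(dt)$, together with the remainder estimate at level $2k$, produces a finite upper bound for $\int_{\mathbb R}t^{2k}\,\mu(dt)$, so that $m_{2k}$ exists. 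Once all moments up to order $2n$ are available, the direct part (already established) pins down their values and yields the odd moments as well.

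The main obstacle will be the converse, and within it the step of promoting one-dimensional asymptotic information along $z=iy$ into genuine integrability of $t^{2k}$ against $\mu$. The inductive use of Fatou's lemma is the natural device, but it demands careful separation of real and imaginary parts at each stage: even moments are captured by $\operatorname{Im}G(iy)/y$-type quantities, whereas odd moments appear in the real part after the relevant power of $iy$ has been factored out, and keeping track of signs and of the precise remainder behavior at each level is where the delicate bookkeeping lies.
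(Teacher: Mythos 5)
The paper does not prove this lemma at all: it is stated as ``well-known'' with a citation to Akhiezer's book on the classical moment problem, so there is no internal argument to compare against. Your proposal reconstructs precisely the standard Hamburger--Nevanlinna proof that the citation points to: the finite expansion $\frac{1}{z-t}=\sum_{j=0}^{2n-1}t^{j}z^{-j-1}+\frac{t^{2n}}{z^{2n}(z-t)}$ with the remainder controlled by $|z-t|\ge \Im z\ge |z|\sin\delta$ for the direct part, and the Nevanlinna representation plus a moment-by-moment induction for the converse. Two small points deserve tightening. First, dominated convergence by itself gives only pointwise convergence as $|z|\to\infty$; to get uniformity in the angle you should split the integral at $|t|\le T$ versus $|t|>T$, bound the tail by $(1+\tfrac{1}{\sin\delta})\int_{|t|>T}t^{2n}\,\mu(dt)$, and note that on $|t|\le T$ one has $|z/(z-t)-1|\le T/(|z|\sin\delta)$ uniformly. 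Second, in the converse the cleanest device is monotone rather than Fatou's lemma: writing the level-$2k$ remainder at $z=iy$ as
\begin{equation*}
\int_{\mathbb R}\frac{y^{2}t^{2k}}{y^{2}+t^{2}}\,\mu(dt)
+i\,y\Big(\int_{\mathbb R}\frac{y^{2}t^{2k-1}}{y^{2}+t^{2}}\,\mu(dt)-m_{2k-1}\Big),
\end{equation*}
the real part increases monotonically to $\int t^{2k}\,\mu(dt)$, so its convergence to $m_{2k}$ identifies the even moment exactly (not merely an upper bound), after which Cauchy--Schwarz gives integrability of $|t|^{2k-1}$ and the vanishing of the imaginary part pins down $m_{2k-1}$ --- note the odd moment sits in the \emph{imaginary} part of this normalized remainder, even though it sits in $\Re G(iy)$ before normalization. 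With these touches the argument is complete and is, in substance, the proof in the cited reference.
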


By Lemma~\ref{3.4abl} and the~Cartier--Good formula for free random variables 
(see Lehner~\cite{Leh:2004}),
we easily obtain the~expansion of the~function $\phi_{\mu}(z)$. See \cite{Ka:2007} as well.

\begin{proposition}\label{3.5pr}
For every probability measure $\mu$ such that $m_{2n}(\mu)<\infty$ for a~non-negative
integer $n$ we have
\begin{equation}\label{3.5pro1}
\phi_{\mu}(z)=\kappa_1+\frac{\kappa_2}z+\dots+\frac{\kappa_{2n}}{z^{2n-1}}
+\frac{o(1)}{z^{2n-1}},\qquad z\in\Gamma_{\alpha,\beta},\quad z\to\infty,
\end{equation}
where $\kappa_j=\kappa_j(\mu),\,j=2,\dots,2n$, are the~free cumulants of the~probability measure $\mu$. 

Conversely, if for some function $\phi_{\mu}(z)$ the~relation $(\ref{3.5pro1})$
holds with real coefficients $\kappa_j$, then $\mu$ has a finite~moment $m_{2n}(\mu)<\infty$ and
$\kappa_j=\kappa_j(\mu),j=1,\dots,2n$.
\end{proposition}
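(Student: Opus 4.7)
The plan is to derive both implications by combining the asymptotic moment expansion of the Cauchy transform from Lemma~\ref{3.4abl} with the representation of $\phi_\mu$ in terms of $G_\mu$ from Proposition~\ref{3.4pr}, and then identifying the resulting coefficients as free cumulants via the Cartier--Good formula used by Lehner~\cite{Leh:2004}.

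For the direct implication I would proceed in three steps. First, Lemma~\ref{3.4abl} provides
$$G_{\mu}(z)=\frac{1}{z}+\frac{m_1}{z^2}+\dots+\frac{m_{2n-1}}{z^{2n}}+\frac{m_{2n}+o(1)}{z^{2n+1}}$$
as $z\to\infty$ non-tangentially (in particular on $\Gamma_{\alpha,\beta}$). Second, formal inversion of this series in the variable $1/z$ yields
$$F_{\mu}(z)=\frac{1}{G_\mu(z)}=z-m_1+\frac{R_2}{z}+\dots+\frac{R_{2n}}{z^{2n-1}}+\frac{o(1)}{z^{2n-1}}$$
with explicit universal polynomials $R_j$ in $m_1,\dots,m_j$; this may also be read off from Proposition~\ref{3.4pr} by writing $\phi_\mu(z)=z^2\bigl(G_\mu(z)-\tfrac{1}{z}\bigr)(1+q_\mu(z))$ and expanding $q_\mu$. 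Third, inverting this asymptotic expansion on $\Gamma_{\alpha,\beta}$, where the right inverse $F_\mu^{(-1)}$ is guaranteed to exist by the discussion following (\ref{2.2*}), produces
$$\phi_\mu(z)=F_\mu^{(-1)}(z)-z=c_1+\frac{c_2}{z}+\dots+\frac{c_{2n}}{z^{2n-1}}+\frac{o(1)}{z^{2n-1}},$$
whose coefficients $c_j$ are again polynomials in $m_1,\dots,m_j$. The Cartier--Good (moment-cumulant) formula expresses $\kappa_j(\mu)$ as exactly the same polynomial in the moments, so $c_j=\kappa_j(\mu)$ and (\ref{3.5pro1}) follows.

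For the converse, assume (\ref{3.5pro1}) holds with real coefficients $\kappa_j$. Reversing the inversion process gives
$$F_\mu(z)=z-\kappa_1+\frac{S_2}{z}+\dots+\frac{S_{2n}}{z^{2n-1}}+\frac{o(1)}{z^{2n-1}}$$
and hence
$$G_\mu(z)=\frac{1}{z}+\frac{\tilde m_1}{z^2}+\dots+\frac{\tilde m_{2n-1}}{z^{2n}}+\frac{\tilde m_{2n}+o(1)}{z^{2n+1}}$$
with real constants $\tilde m_j$ given by universal polynomials in $\kappa_1,\dots,\kappa_j$. Specializing to $z=iy$, $y\to\infty$ (which lies in $\Gamma_{\alpha,\beta}$), the converse half of Lemma~\ref{3.4abl} yields $m_{2n}(\mu)<\infty$ and $m_j(\mu)=\tilde m_j$ for $j\le 2n$. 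Applying the already-established direct direction to this $\mu$ then identifies the assumed $\kappa_j$ with $\kappa_j(\mu)$.

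The main obstacle will be justifying the formal inversion of the asymptotic expansions uniformly over the cone $\Gamma_{\alpha,\beta}$ (rather than merely along individual rays) and tracking the $o(z^{-(2n-1)})$ remainder through the successive algebraic operations of reciprocation and compositional inversion. Once this bookkeeping is in place, the matching of the coefficients with the free cumulants is a purely combinatorial consequence of the Cartier--Good moment-cumulant correspondence cited in the excerpt.
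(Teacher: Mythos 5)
Your proposal follows exactly the route the paper indicates: the paper gives no detailed argument but states that Proposition~\ref{3.5pr} follows from Lemma~\ref{3.4abl} (the moment expansion of $G_\mu$) together with the Cartier--Good moment-cumulant formula of Lehner, which is precisely the chain $G_\mu \to F_\mu \to F_\mu^{(-1)} \to \phi_\mu$ with coefficient identification that you carry out, including the use of the converse half of Lemma~\ref{3.4abl} along $z=iy$ for the reverse implication. Your explicit attention to propagating the $o(z^{-(2n-1)})$ remainder through reciprocation and compositional inversion on $\Gamma_{\alpha,\beta}$ is the only technical point the paper leaves implicit, and your treatment of it is sound.
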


We also need the~following well-known result (see for example~\cite{NiSp:2006}).
\begin{proposition}\label{3.4abcpr}
In order that a~probability measure $\mu$ has a~compact support it is necessary and sufficient
that the~sequence $\{\kappa_s(\mu)\}_{s=1}^{\infty}$ of free cumulants of this measure
satisfies the~inequality
\begin{equation}
|\kappa_s(\mu)|\le c^s,\quad s\in\mathbb N,\notag
\end{equation}
with some constant $c>0$.
\end{proposition}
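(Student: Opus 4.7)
The plan is to translate compactness of support into growth bounds on moments, and then transfer these bounds to free cumulants (and back) via the free moment-cumulant formula. Throughout I write $m_k=m_k(\mu)$ and $\kappa_k=\kappa_k(\mu)$, and use the identity
\begin{equation}\notag
m_n=\sum_{\pi\in NC(n)}\prod_{V\in\pi}\kappa_{|V|},
\end{equation}
where $NC(n)$ denotes the lattice of non-crossing partitions of $\{1,\dots,n\}$; its cardinality is the Catalan number $C_n\le 4^n$. This identity is the free analogue of the classical moment-cumulant relation and is implicit in Proposition~\ref{3.5pr} via Lagrange inversion applied to $F_\mu^{(-1)}(z)=z+\phi_\mu(z)$.

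For the sufficiency direction, I would assume $|\kappa_s|\le c^s$ for all $s\ge 1$. Since the block sizes of any $\pi\in NC(n)$ sum to $n$, one has $\bigl|\prod_{V\in\pi}\kappa_{|V|}\bigr|\le c^n$, and summing over $\pi\in NC(n)$ gives $|m_n|\le C_n c^n\le(4c)^n$. A standard argument then forces $\operatorname{supp}(\mu)\subseteq[-4c,4c]$: if some $x_0\in\operatorname{supp}(\mu)$ satisfied $|x_0|>4c$, then choosing $\varepsilon>0$ with $|x_0|-\varepsilon>4c$ and $\mu([x_0-\varepsilon,x_0+\varepsilon])>0$ would yield $m_{2n}\ge(|x_0|-\varepsilon)^{2n}\mu([x_0-\varepsilon,x_0+\varepsilon])$, contradicting $m_{2n}\le(4c)^{2n}$ for large $n$.

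For the necessity direction, assume $\operatorname{supp}(\mu)\subseteq[-R,R]$, so $|m_n|\le R^n$. Then $G_\mu$ extends holomorphically to $\{|z|>R\}$ with convergent Laurent expansion $G_\mu(z)=\sum_{n\ge 0}m_n/z^{n+1}$, and since $zG_\mu(z)\to 1$ at infinity, $F_\mu=1/G_\mu$ is holomorphic in some $\{|z|>R_1\}$ with $F_\mu(z)-z$ bounded there. By the Lagrange inversion theorem, $F_\mu$ admits a holomorphic right inverse in some $\{|z|>R_2\}$, and $\phi_\mu(z)=F_\mu^{(-1)}(z)-z$ therefore admits a convergent Laurent expansion $\phi_\mu(z)=\sum_{n\ge 1}\kappa_n/z^{n-1}$ in that domain; the coefficients coincide with the free cumulants by Proposition~\ref{3.5pr}. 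Cauchy's estimate on the circle $|z|=2R_2$ then yields $|\kappa_n|\le M\cdot(2R_2)^{n-1}$ with $M=\sup_{|z|=2R_2}|\phi_\mu(z)|$, hence $|\kappa_n|\le c^n$ for an appropriate $c>0$.

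The main subtlety lies in the necessity direction: I have to verify that $F_\mu$ is holomorphic and invertible in a full punctured neighborhood of $\infty$ in $\widehat{\mathbb C}$, rather than only in a Stolz cone $\Gamma_{\alpha,\beta}$ as guaranteed by the general Bercovici--Voiculescu theory. This extension uses crucially that $\mu$ is compactly supported, so that $G_\mu$ extends across $\mathbb R\setminus[-R,R]$. A purely combinatorial alternative would invert the moment-cumulant formula via the Möbius function $\mu_{NC}$ on $NC(n)$ and use the Catalan-type bound $\sum_{\pi\in NC(n)}|\mu_{NC}(\pi,\hat 1)|\le A^n$; this yields the same exponential estimate in parallel with the sufficiency argument but sidesteps the analytic inversion step entirely.
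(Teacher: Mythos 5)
The paper does not prove this proposition at all; it is quoted as a well-known fact with a pointer to Nica and Speicher \cite{NiSp:2006}, so there is no internal argument to compare yours against. That said, your proof is correct and self-contained, and it follows the standard route. The sufficiency direction via the moment--cumulant formula $m_n=\sum_{\pi\in NC(n)}\prod_{V\in\pi}\kappa_{|V|}$, the bound $|C_n|\le 4^n$, and the identification of $\sup\{|x|:x\in\operatorname{supp}\mu\}$ with $\limsup m_{2n}^{1/2n}$ is exactly the combinatorial argument one finds in \cite{NiSp:2006}; note only that the hypothesis already presupposes that all free cumulants (hence all moments) of $\mu$ exist, which is what licenses the use of that formula. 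The necessity direction is the standard analytic one: for compactly supported $\mu$ the function $F_\mu(z)=z-m_1+O(1/z)$ is holomorphic in a full punctured neighbourhood of $\infty$, is invertible there (your reduction to $h(w)=1/F_\mu(1/w)$ with $h(0)=0$, $h'(0)=1$ is the clean way to see this), and $\phi_\mu=F_\mu^{(-1)}-z$ is bounded near $\infty$, so Cauchy estimates give the exponential bound; you correctly flag that the only real subtlety is upgrading invertibility from a Stolz cone $\Gamma_{\alpha,\beta}$ to a full neighbourhood of $\infty$, and that this is where compact support enters. The M\"obius-function alternative you sketch for necessity would also work and would make the whole proof combinatorial. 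No gaps.
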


We introduced the~definition of the~free cumulants in Section~2. Let us recall the~definition
of mixed free cumulants as well. Let $T_1,\dots,T_n\in\tilde A_{as}$ be random variables
and let $T_j^{(k)},\,k=1,\dots,n$, denote free copies. Set $T_j^{\omega}$ as in
(\ref{2.2}). Following Lehner~\cite{Leh:2004}, the~nth mixed cumulant may be defined via
\begin{equation}\notag
\kappa_n(T_1,\dots,T_n)=\frac 1n\tau(T_1^{\omega}T_2^{\omega}\dots T_n^{\omega})
\end{equation}
in a~short way.
We will use the~following known results, see~~\cite{Leh:2004} and \cite{NiSp:2006}.
\begin{theorem}\label{3c.1th}
Consider a~non-commutative probability space $(A,\tau)$ and let $(\kappa_n)_{n\in\mathbb N}$
be the~corresponding free cumulant functionals. Consider random variables $(T_j)_{j\in I}$
in $\tilde{A}_{sa}$. Then the~following two statements are equivalent.

\quad(i) $(T_j)_{j\in I}$ are freely independent.

\quad(ii) We have $\kappa_n(T_{j_1},\dots,T_{j_n})=0$ for all $n\ge 2$ 
and $j_1,\dots,j_n\in I$
such that at least two of these $n$ indices $j_h$ are different.
\end{theorem}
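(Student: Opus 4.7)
The plan is to exploit the Cartier--Good/Lehner formula (\ref{2.3}) together with multilinearity and the combinatorics of $n$-th roots of unity, using the freeness of the copies $T_j^{(k)}$. Both implications will be read off the expansion
$$
n\kappa_n(T_{j_1},\dots,T_{j_n})
= \sum_{k_1,\dots,k_n=1}^n \omega^{k_1+\dots+k_n}\,\tau\bigl(T_{j_1}^{(k_1)}T_{j_2}^{(k_2)}\cdots T_{j_n}^{(k_n)}\bigr),
$$
in which all operators $T_{j_l}^{(k_l)}$ are mutually free whenever the original $T_j$ are, since free copies of free families remain free. For the direction (i) $\Rightarrow$ (ii), I would use that under freeness the moment $\tau(T_{j_1}^{(k_1)}\cdots T_{j_n}^{(k_n)})$ depends only on the equivalence pattern of the super-indices $(j_l,k_l)$: consecutive equal super-indices merge, and the resulting alternating product is evaluated by repeated application of the centred-alternating freeness relation recalled in Section~2. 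Collecting $\omega$-powers and invoking $\sum_{k=1}^n \omega^{km}=0$ for $m\not\equiv 0\pmod n$ annihilates every contribution except those in which all $j_l$ agree with one fixed index; by hypothesis at least two of the $j_l$ differ, so the mixed cumulant vanishes.

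For the converse (ii) $\Rightarrow$ (i), I would first invert the Cartier--Good definition to obtain the moment-cumulant formula
$$
\tau(T_{j_1}T_{j_2}\cdots T_{j_n}) = \sum_{\pi\in NC(n)}\prod_{V\in\pi}\kappa_{|V|}\bigl(T_{j_l}\,:\,l\in V\bigr),
$$
the sum ranging over non-crossing partitions of $\{1,\dots,n\}$. Under hypothesis (ii) only those $\pi$ each of whose blocks is labelled by a single index $j$ survive, so each mixed moment of $(T_j)$ factorises into the universal non-crossing sum that would be produced by a free family with the same individual distributions. Comparing this with the definition of freeness through alternating centred moments recalled at the start of Section~2 forces $(T_j)_{j\in I}$ to be free.

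The main obstacle is precisely the combinatorial identification of the Cartier--Good sum (\ref{2.3}) with the classical M\"obius-inversion definition of free cumulants on the lattice $NC(n)$. I would carry this out by grouping the multi-indices $(k_1,\dots,k_n)$ into cyclic equivalence classes, attaching to each class the partition of $\{1,\dots,n\}$ whose blocks record constant super-indices along alternating reductions; the freeness relation forces the resulting partition to be non-crossing, and the averaging against $\omega^{k_1+\dots+k_n}$ converts the cyclic normalization into the M\"obius weights on $NC(n)$. Once this coincidence is established, both implications reduce to bookkeeping of which non-crossing partitions survive. A minor technical point is that elements of $\tilde A_{sa}$ may be unbounded, so joint moments need not exist a priori; this is absorbed either by restricting to the bounded subalgebra generated by spectral projections of the $T_j$, or by noting that the cumulant functionals are already assumed multilinear on a domain where the relevant moments are finite, which suffices to conclude freeness on all bounded continuous functions.
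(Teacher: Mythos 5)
The paper does not prove this statement at all: it is quoted as a known result, with references to Lehner's paper on noncommutative exchangeability systems and to Nica--Speicher, so there is no internal proof to compare yours against. Judged on its own, your outline is the standard argument: (i)$\Rightarrow$(ii) by expanding the Cartier--Good average and killing all contributions except the one-block pattern with the root-of-unity identity $\sum_{k=1}^{n}\omega^{km}=0$ for $n\nmid m$, and (ii)$\Rightarrow$(i) by M\"obius inversion on $NC(n)$ together with the observation that vanishing mixed cumulants force the joint moments to coincide with those of a genuinely free family having the same marginal distributions. Both halves are structurally sound.

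The caveat is the one you name yourself: everything hinges on identifying the averaged definition (2.3) with the non-crossing-partition cumulants, equivalently on the moment formula $\tau(T_{j_1}^{(k_1)}\cdots T_{j_n}^{(k_n)})=\sum_{\pi\in NC(n),\ \pi\le\ker(\vec\jmath,\vec k)}\prod_{V\in\pi}\kappa_{|V|}$. That identification is itself a theorem of Lehner of essentially the same depth as the statement being proved, and your sketch of it (cyclic equivalence classes turning into M\"obius weights under the $\omega$-average) is a description of what must happen rather than an argument that it does. If you take the $NC(n)$ definition as primitive, both implications close up cleanly: for (i)$\Rightarrow$(ii), after interchanging the sum over $\vec k$ with the sum over $\pi\le\ker(\vec\jmath,\vec k)$, the inner factor $\prod_{V\in\pi}\sum_{k=1}^{n}\omega^{|V|k}$ vanishes unless $\pi$ is the one-block partition, which is incompatible with two distinct indices $j_l$. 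If instead you insist on (2.3) as the definition, the equivalence of the two definitions is the real content and must be supplied. The unboundedness point you raise is harmless in context, since the paper only invokes the theorem for compactly supported distributions.
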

\begin{proposition}\label{3c.1pr}
Let $L_k=\sum_{j=1}^n a_{kj}T_j,\,k=1,\dots,m$, be an~affine transformation of $T_1,\dots,T_n$,
then we have, for $m\ge 2$,
\begin{equation}\notag
\kappa_m(L_1,\dots,L_m)=\sum_{j_1,\dots,j_m}a_{1,j_1}\dots a_{m,j_m}\kappa_m(T_{j_1},\dots,T_{j_m}).
\end{equation} 
\end{proposition}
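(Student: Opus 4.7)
The plan is to unpack the Cartier--Good style definition of mixed free cumulants recalled just above Proposition~\ref{3c.1pr}, namely $\kappa_m(S_1,\ldots,S_m) = \tfrac{1}{m}\,\tau(S_1^\omega S_2^\omega \cdots S_m^\omega)$ with $S_j^\omega = \sum_{i=1}^m \omega^i S_j^{(i)}$ built from free copies of $S_j$ and $\omega$ a primitive $m$th root of unity, and then push the affine linear combinations through using multilinearity of $\tau$.

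First, I would fix one coherent joint system of free copies serving both sides of the identity at once. Concretely, take freely independent families $(T_1^{(i)},\ldots,T_n^{(i)})$, $i=1,\ldots,m$, each distributed as $(T_1,\ldots,T_n)$. Since each $L_k$ is an affine function of $T_1,\ldots,T_n$, the variables $L_k^{(i)} := \sum_{j=1}^n a_{k,j}\, T_j^{(i)}$ form a legitimate choice of free copies of $L_k$ for the definition of $\kappa_m(L_1,\ldots,L_m)$, because their joint distribution is correct and the $m$ families $(L_1^{(i)},\ldots,L_n^{(i)})$ are free by construction.

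With this identification, a one-line calculation yields $L_k^\omega = \sum_{i=1}^m \omega^i L_k^{(i)} = \sum_{j=1}^n a_{k,j}\, T_j^\omega$, where $T_j^\omega := \sum_{i=1}^m \omega^i T_j^{(i)}$ is exactly the object entering the definition of $\kappa_m(T_{j_1},\ldots,T_{j_m})$. Expanding the product $L_1^\omega \cdots L_m^\omega$ by distributivity, applying $\tfrac{1}{m}\,\tau$ term-by-term, and pulling out scalars by linearity of $\tau$ then gives $\kappa_m(L_1,\ldots,L_m) = \sum_{j_1,\ldots,j_m} a_{1,j_1}\cdots a_{m,j_m}\,\kappa_m(T_{j_1},\ldots,T_{j_m})$, as claimed.

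The one genuinely subtle point, and hence the ``main obstacle'', is just this simultaneous realization of free copies for both sides of the claimed identity: one must verify that the very same joint system $(T_j^{(i)})$ which serves to compute all the mixed cumulants $\kappa_m(T_{j_1},\ldots,T_{j_m})$ appearing on the right can also be used to compute $\kappa_m(L_1,\ldots,L_m)$ on the left. Once this compatibility is fixed, the proof is pure multilinearity of the trace together with the distributive law and requires no further input from free probability.
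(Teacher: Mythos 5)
Your proof is correct. Note that the paper itself gives no proof of Proposition~\ref{3c.1pr}: it is stated as a known result with references to Lehner~\cite{Leh:2004} and Nica--Speicher~\cite{NiSp:2006}. Your argument --- realizing the free copies $L_k^{(i)}=\sum_j a_{kj}T_j^{(i)}$ inside one common system of free copies of the family $(T_1,\dots,T_n)$, observing $L_k^{\omega}=\sum_j a_{kj}T_j^{\omega}$, and expanding $\tfrac 1m\tau(L_1^{\omega}\cdots L_m^{\omega})$ by multilinearity of $\tau$ --- is precisely the standard proof consistent with the Cartier--Good definition of mixed cumulants that the paper adopts, and you correctly isolate the only delicate point, namely that this particular realization of free copies is admissible for computing both sides (which holds since the value of $\tau(L_1^{\omega}\cdots L_m^{\omega})$ depends only on the joint distributions of the free families, by the Bercovici--Voiculescu result quoted in Section~2).
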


\begin{proposition}\label{3c.2pr}
Mixed cumulants vanish. That is, if there is a~nontrivial subset $I\subset[n]$
$($i.e., $I\ne\emptyset$ and $I\ne[n]$, $[n]:=\{1,\dots,n\})$ such that $\{T_j\}_{j\in I}$ and 
$\{T_j\}_{j\in [n]\setminus I}$ are free, then $\kappa_n(T_1,\dots,T_n)=0$.
\end{proposition}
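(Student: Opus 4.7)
The plan is to derive the vanishing of $\kappa_n(T_1,\dots,T_n)$ by combining the moment--cumulant inversion formula with the combinatorial characterization of freeness on the lattice $NC(n)$ of non-crossing partitions. Specifically, Lehner's formula (\ref{2.3}) together with the Cartier--Good identity invoked before Proposition~\ref{3.5pr} yields, for any tuple $S_1,\dots,S_m\in\tilde A_{sa}$, the moment--cumulant pair
$$
\tau(S_1\cdots S_m)=\sum_{\pi\in NC(m)}\kappa_{\pi}[S_1,\dots,S_m],
$$
and its Möbius inverse
$$
\kappa_m(S_1,\dots,S_m)=\sum_{\pi\in NC(m)}\mu_{NC}(\pi,1_m)\,\tau_{\pi}[S_1,\dots,S_m],
$$
where $\mu_{NC}$ denotes the Möbius function of $NC(m)$ and $\kappa_{\pi}$, $\tau_{\pi}$ are the usual products of block cumulants/traces indexed by $\pi$.

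First, I would establish that the hypothesis of Proposition~\ref{3c.2pr} implies the following \emph{restricted} moment--cumulant formula: for every $m\ge 1$ and every multi-index $(i_1,\dots,i_m)\in[n]^m$,
$$
\tau(T_{i_1}\cdots T_{i_m})=\sum_{\substack{\pi\in NC(m)\\ \pi\le\sigma}}\kappa_{\pi}[T_{i_1},\dots,T_{i_m}],
$$
where $\sigma\in NC(m)$ is the two-block partition that separates the positions $k$ with $i_k\in I$ from those with $i_k\in J:=[n]\setminus I$. This is the standard combinatorial formulation of freeness, and I would prove it by induction on $m$: one isolates an alternating pair $T_{i_k}T_{i_{k+1}}$ with indices on opposite sides of the split, centers each factor inside its own subalgebra via $T_{i_k}=(T_{i_k}-\tau(T_{i_k})\cdot 1)+\tau(T_{i_k})\cdot 1$, and uses the alternating-product definition of freeness to reduce the length of the product, tracking which non-crossing partitions survive.

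Second, comparing the general and restricted expansions yields
$$
\sum_{\substack{\pi\in NC(m)\\ \pi\not\le\sigma}}\kappa_{\pi}[T_{i_1},\dots,T_{i_m}]=0
$$
for every multi-index. Möbius inversion on $NC(m)$ then forces $\kappa_{\pi}[T_{i_1},\dots,T_{i_m}]=0$ for every $\pi$ containing at least one block that meets both $I$ and $J$. Specializing to $m=n$, $(i_1,\dots,i_n)=(1,\dots,n)$, and $\pi=1_n$ (which fails $\pi\le\sigma$ since both $I$ and $J$ are nonempty) gives $\kappa_n(T_1,\dots,T_n)=0$, as desired.

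The main obstacle lies in the first step: although the induction is classical, the bookkeeping of which non-crossing partitions are produced at each reduction is delicate, and the matching of the resulting partition set with the support of the Möbius inversion in the second step must be verified carefully, so that one isolates exactly those partitions $\pi$ whose blocks mix $I$- and $J$-indices.
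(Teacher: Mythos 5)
The paper offers no proof of Proposition~\ref{3c.2pr}: it is quoted as a known result, with the references \cite{Leh:2004} and \cite{NiSp:2006} given just before Theorem~\ref{3c.1th}, so there is no internal argument to compare yours against. Your sketch is essentially the standard Nica--Speicher proof from the cited monograph: translate freeness into the restricted moment--cumulant formula over $\{\pi\in NC(m):\pi\le\sigma\}$, then compare with the unrestricted formula to kill the mixed cumulants. Two caveats. First, ``M\"obius inversion'' is not literally the mechanism in your second step: the index set $\{\pi:\pi\not\le\sigma\}$ is the complement of the interval $[0_m,\sigma]$ in $NC(m)$, not an interval, so you cannot invert over it. What actually closes the argument is induction on $m$: any $\pi\not\le\sigma$ with $\pi\ne 1_m$ has a mixed block of size strictly between $1$ and $m$, whose cumulant factor vanishes by the induction hypothesis, so the identity $\sum_{\pi\not\le\sigma}\kappa_{\pi}=0$ collapses to the single term $\kappa_m(T_{i_1},\dots,T_{i_m})=0$. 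Second, your first step is where all the work lives and is only sketched; moreover, since the paper defines cumulants by Lehner's formula (\ref{2.3}) rather than by Speicher's recursion over $NC$, writing the moment--cumulant pair already uses the (nontrivial, but established in \cite{Leh:2004}) identification of the two definitions. Given that the paper itself treats the proposition as citable background, these are acceptable, but a self-contained write-up would have to supply the induction establishing the restricted formula and replace the inversion step by the induction just described.
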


Now we prove an~analogue of a~known lemma of Linnik
for the~characteristic functions (see~\cite{ILin:1975}, Ch. 1, \S 6).

Recall that a~probability measure $\mu$ is symmetric if $\mu(S)=\mu(-S)$
for any real Borel set $S$. It is not difficult to verify 
(see~\cite{ChG:2006}) that $\mu$ is symmetric if and only if
$\phi_{\mu}(iy)$ takes imaginary values for $y>0$, where $\phi_{\mu}(iy)$
is defined.

\begin{lemma}\label{3.5l}
Let $\mu$ be a~symmetric probability measure and $\{y_k\}$ be a~sequence of
positive numbers such that $\lim y_k\to\infty$. If, for all $k$, 
$\phi_{\mu}(iy_k)=\phi_{\nu}(iy_k)$, where $\nu$ is a~symmetric probability measure with compact
support, then $\mu=\nu$.
\end{lemma}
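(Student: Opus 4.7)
The plan is to translate the equality of Voiculescu transforms at the imaginary points $iy_k$ into equality of all even moments of $\mu$ and $\nu$, then close the argument via the classical moment problem for measures with compact support.

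First I would exploit the symmetry of both measures: for any symmetric $\rho\in\mathcal M$, $F_\rho$ maps the positive imaginary axis into itself, and so does $F_\rho^{(-1)}$ wherever defined. Setting $iw_k:=F_\mu^{(-1)}(iy_k)=iy_k+\phi_\mu(iy_k)$, the hypothesis gives $iw_k=F_\nu^{(-1)}(iy_k)$ as well, with $w_k$ real and positive; moreover $w_k\to\infty$ because compact support of $\nu$ forces $\phi_\nu(iy_k)\to 0$. Consequently
\[
G_\mu(iw_k)=\frac{1}{F_\mu(iw_k)}=\frac{1}{iy_k}=G_\nu(iw_k).
\]
Using the symmetric representation $G_\rho(iw)=-i\int\frac{w}{w^2+t^2}\,d\rho(t)$, multiplication by $iw_k$ and the fact that $\mu,\nu$ are probability measures turn this into
\[
\int_{\mathbb R}\frac{t^2}{w_k^2+t^2}\,d\mu(t)=\int_{\mathbb R}\frac{t^2}{w_k^2+t^2}\,d\nu(t),\qquad k\in\mathbb N.
\]

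Next I would bootstrap the equality of even moments inductively in $n$. Multiplying by $w_k^2$, the $\nu$-integral converges to $m_2(\nu)$ by dominated convergence (the integrand is bounded by $t^2$, which is $\nu$-integrable since $\nu$ has compact support); Fatou's lemma on the $\mu$-side forces $m_2(\mu)\le m_2(\nu)<\infty$, after which DCT with majorant $t^2$ yields $m_2(\mu)=m_2(\nu)$. Assuming inductively $m_{2j}(\mu)=m_{2j}(\nu)<\infty$ for $j<n$, the elementary split
\[
\frac{t^{2n-2}}{w^2+t^2}=\frac{t^{2n-2}}{w^2}-\frac{1}{w^2}\cdot\frac{t^{2n}}{w^2+t^2}
\]
converts the established integral identity into $\int\frac{t^{2n}}{w_k^2+t^2}\,d\mu=\int\frac{t^{2n}}{w_k^2+t^2}\,d\nu$, and the same Fatou/DCT step yields $m_{2n}(\mu)=m_{2n}(\nu)$.

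Since the support of $\nu$ lies in some $[-c,c]$, all even moments of $\mu$ inherit the bound $m_{2n}(\mu)\le c^{2n}$, while the odd moments of both measures vanish by symmetry. Hence $\mu$ is determined uniquely by its moments via the compactly supported Hamburger moment problem, and equality of moments with $\nu$ gives $\mu=\nu$. I expect the main obstacle to be the first step: one must verify carefully that $w_k$ is real, tends to $+\infty$, lies in the common domain where both $F_\mu^{(-1)}$ and $F_\nu^{(-1)}$ are defined and the symmetric resolvent representation applies, and that the translation from $\phi_\mu(iy_k)=\phi_\nu(iy_k)$ to $G_\mu(iw_k)=G_\nu(iw_k)$ is rigorous. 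Once this setup is in place, the induction and the final moment-problem argument are essentially mechanical.
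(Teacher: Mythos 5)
Your proposal is correct and follows essentially the same route as the paper: both transfer the equality $\phi_{\mu}(iy_k)=\phi_{\nu}(iy_k)$ into $G_{\mu}(it_k)=G_{\nu}(it_k)$ at purely imaginary points $it_k=F_{\nu}^{(-1)}(iy_k)\to i\infty$ (using symmetry to keep everything on the imaginary axis), then induct on the even moments via the identity $\int \frac{t_k^2u^{2n}}{u^2+t_k^2}\,\mu(du)=\int \frac{t_k^2u^{2n}}{u^2+t_k^2}\,\nu(du)$, and conclude by determinacy of the moment problem for compactly supported measures. Your elementary algebraic split in the induction step is just a hands-on version of the asymptotic expansion of $G_{\mu}$ (the paper's Lemma~\ref{3.4abl}), so the two arguments coincide in substance.
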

\begin{proof}
We shall show that $\mu$ has moments $m_n(\mu):=\int_{\Bbb R}u^n\,\mu(du)$ 
of all orders and that $m_n(\mu)=m_n(\nu)$ for all $n=1,\dots$. 
The~proof proceeds by induction for even $n$. 

From the~assumptions of the~lemma we see  
\begin{equation}\label{3.5l1}
G_{\mu}(it_k)=G_{\nu}(it_k),\quad k\ge k_0,
\end{equation} 
where $t_k:=-iF_{\nu}(iy_k)\to\infty$ as $y_k\to\infty$, and $k_0$ is sufficiently 
large positive integer.
By (\ref{3.5l1}), we obtain the~following equation, using the~symmetry 
of the~measures $\mu$ and $\nu$,
\begin{equation}\label{3.5l2}
(it_k)^3\Big(G_{\mu}(it_k)-\frac 1{it_k}\Big)=
\int\limits_{\Bbb R}\frac{t_k^2u^2}{u^2+t_k^2}\,\mu(du)
=\int\limits_{\Bbb R}\frac{t_k^2u^2}{u^2+t_k^2}\,\nu(du).
\end{equation} 

We shall prove by induction that $m_{n}(\mu)=m_{n}(\nu)$ for all $n=0,1,\dots$.
Letting $t_k\to\infty$, we conclude from (\ref{3.5l2}) that $m_2(\mu)<\infty$ and $m_2(\mu)=m_2(\nu)$.
Now suppose that, for all $p<n$, $m_{2p}(\mu)$ exists and $m_{2p}(\mu)
=m_{2p}(\nu)$. Using (\ref{3.5l1}) and the~formula
$$
(it_k)^{2n+1}\Big(G_{\mu}(it_k)-\frac 1{it_k}-\frac{m_2(\mu)}{(it_k)^3}
-\dots-\frac{m_{2n-2}(\mu)}{(it_k)^{2n-1}}\Big)=
\int\limits_{\Bbb R}\frac{t_k^2u^{2n}}{u^2+t_k^2}\,\mu(du)
$$
we arrive at the~relation
$$
\int\limits_{\Bbb R}\frac{t_k^2u^{2n}}{u^2+t_k^2}\,\mu(du)=
\int\limits_{\Bbb R}\frac{t_k^2u^{2n}}{u^2+t_k^2}\,\nu(du).
$$
Letting here $t_k\to\infty$, we obtain $m_{2n}(\mu)<\infty$ and 
$m_{2n}(\mu)=m_{2n}(\nu)$ that was to be proved.

It remains to note that since $m_{n}(\mu)=m_{n}(\nu)$ for all $n=0,1,\dots$
and the~measure $\nu$ has compact support, we have $\mu=\nu$. 
Hence the~lemma is proved.
\end{proof}

Voiculescu~\cite{Vo:1992}, Maassen~\cite{Ma:1992} and in the~general case
Biane~\cite{Bi:1998} proved that  
there exist unique functions $Z_1(z)$ and $Z_1(z)$ from the~class
$\mathcal F$ such that
\begin{equation}\label{3a.1a}
z=Z_1(z)+Z_2(z)-F_{\mu_1}(Z_1(z))\quad\text{and}\quad
F_{\mu_1}(Z_1(z))=F_{\mu_2}(Z_1(z)),\quad z\in\Bbb C^+.
\end{equation}
In addition $F_{\mu_1\boxplus\mu_2}(z)=F_{\mu_1}(Z_1(z))$.
The~relation (\ref{3a.1a}) was proved by purely analytic methods
by Chistyakov and G\"otze~\cite{ChG:2005} and Belinschi and Bercovici~\cite{BelBe:2007}.

Introduce the~class ${\mathcal K}[a,b]$ in the~following way.
A~function $F(z)$ is in class ${\mathcal K}[a,b]$ if

1) $F(z)$ is in class $\mathcal N$, and

2) $F(z)$ is holomorphic and positive in the~interval $(-\infty,a)$,
and holomorphic and negative in the~interval $(b,+\infty)$.
The~following theorem is due to Krein~\cite{KrN:1977}.

\begin{theorem}\label{3.5th}
A~function $F(z)$ is in class ${\mathcal K}[a,b]$ if and only if it
admits a~representation
$$
F(z)=\int\limits_{[a,b]}\frac{\sigma(dt)}{t-z},
$$
where $\sigma$ is a~finite non-negative measure.
\end{theorem}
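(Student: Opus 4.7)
The plan is to derive both directions from the Nevanlinna integral representation of functions in $\mathcal N$, combined with the sign information encoded in the definition of $\mathcal K[a,b]$.

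The sufficiency direction will be short. Given $F(z)=\int_{[a,b]}(t-z)^{-1}\,d\sigma(t)$ with $\sigma$ a finite nonnegative measure on $[a,b]$, a direct calculation gives $\Im F(x+iy)=y\int_{[a,b]}|t-z|^{-2}\,d\sigma(t)\ge 0$, placing $F$ in $\mathcal N$; analyticity of $F$ off $[a,b]$ is immediate; and for real $x<a$ (respectively $x>b$) the integrand is pointwise positive (resp.\ negative), giving the required sign conditions on the two complementary intervals.

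For necessity I will start from the Nevanlinna representation
\[
F(z)=\alpha+\beta z+\int_{\mathbb R}\frac{1+tz}{t-z}\,d\nu(t),\qquad z\in\mathbb C^+,
\]
with $\alpha\in\mathbb R$, $\beta\ge 0$, and a finite nonnegative measure $\nu$, and carry out three steps. First, I will localize $\operatorname{supp}\nu$: the hypothesis that $F$ extends holomorphically and real-valuedly across $(-\infty,a)\cup(b,+\infty)$ implies $\lim_{y\to 0^+}\Im F(x+iy)=0$ there, so the Stieltjes--Perron inversion formula forces $\nu$ to have no mass outside $[a,b]$. Second, I will eliminate the affine part: using $(1+tx)/(t-x)=-t+O(1/x)$ uniformly for $t\in[a,b]$, the asymptotics
\[
F(x)=\beta x+\Bigl(\alpha-\int_{[a,b]}t\,d\nu(t)\Bigr)+o(1),\qquad |x|\to\infty,
\]
combined with the two-sided sign hypothesis and $\beta\ge 0$ force $\beta=0$ and $\alpha=\int_{[a,b]}t\,d\nu(t)$. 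Third, I will simplify using the identity $(1+tz)/(t-z)+t=(1+t^2)/(t-z)$, yielding $F(z)=\int_{[a,b]}(1+t^2)(t-z)^{-1}\,d\nu(t)$; setting $d\sigma(t):=(1+t^2)\,d\nu(t)$, which is a finite nonnegative measure on the compact set $[a,b]$, gives the required representation.

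The main obstacle will be the support-localization step. It requires the fact that a Nevanlinna function extending holomorphically across a real interval must have its Stieltjes spectral measure vanishing on that interval---this is the content of Stieltjes--Perron inversion combined with Schwarz reflection. Once this is in hand, the remaining two steps reduce to elementary asymptotic analysis at $\pm\infty$.
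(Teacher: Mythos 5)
Your proof is correct, but note that the paper does not actually prove this statement: Theorem~\ref{3.5th} is quoted as a known result of Krein, with a pointer to the Krein--Nudel'man monograph, so there is no in-paper argument to compare against. What you have written is the standard derivation of Krein's representation from the Nevanlinna (Riesz--Herglotz) integral formula: the sufficiency computation of $\Im F$ and of the sign of $F$ on the two real half-lines is right; the localization of $\operatorname{supp}\nu$ via Schwarz reflection and Stieltjes--Perron inversion is the correct key step (since $F$ is holomorphic and real on $(-\infty,a)\cup(b,+\infty)$, the boundary values of $\Im F$ vanish there and the inversion formula kills the mass of $(1+t^2)\,d\nu$ outside $[a,b]$); and the two-sided asymptotics $F(x)=\beta x+(\alpha-\int t\,d\nu)+o(1)$ together with $F>0$ near $-\infty$ and $F<0$ near $+\infty$ do force $\beta=0$ and $\alpha=\int t\,d\nu$, after which the algebraic identity $(1+tz)/(t-z)+t=(1+t^2)/(t-z)$ finishes the argument. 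The only caveat worth recording is degenerate: with $\sigma=0$ the represented function is identically zero, which is neither positive on $(-\infty,a)$ nor negative on $(b,+\infty)$ and does not map $\mathbb C^+$ into $\mathbb C^+$, so strictly speaking the equivalence holds for nonzero $\sigma$; this is an imprecision in the statement as quoted, not a gap in your proof.
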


We now prove a free analogue of result by Wintner, see~\cite{LiO:1977}, Ch.~3, \S 2.

\begin{lemma}\label{3.6lem}
Assume that $\mu=\mu_1\boxplus\mu_2$, where $\mu$ has compact support.
Then $\mu_1$ and $\mu_2$ have compact support as well.
\end{lemma}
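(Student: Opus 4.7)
The strategy is to transfer the analyticity of $F_\mu$ outside $\mathrm{supp}\,\mu$ through the subordination identities~(\ref{3a.1a}) to $F_{\mu_1}$ and then recover compact support of $\mu_1$ by Stieltjes inversion; the same argument will handle $\mu_2$. Let $M>0$ be such that $\mathrm{supp}\,\mu\subset[-M,M]$. Then $G_\mu$ is analytic on $\mathbb{C}\setminus[-M,M]$ and real on $\mathbb{R}\setminus[-M,M]$, so $F_\mu=1/G_\mu$ is meromorphic on $\mathbb{C}\setminus[-M,M]$, real on $\mathbb{R}\setminus[-M,M]$, and admits a convergent Laurent expansion $F_\mu(z)=z-m_1(\mu)+O(1/z)$ at infinity.

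I expect the main obstacle to be the first key step: showing that the subordination functions $Z_1,Z_2\in\mathcal{F}$ themselves extend analytically to $\mathbb{C}\setminus[-M,M]$. From $Z_1(z)+Z_2(z)=z+F_\mu(z)$ the sum is immediately analytic on $\mathbb{C}\setminus[-M,M]$; taking non-tangential boundary imaginary parts at $x\in\mathbb{R}\setminus[-M,M]$ and using $\Im Z_j\ge 0$ on $\mathbb{C}^+$ forces $\Im Z_j(x+i0^+)=0$ almost everywhere there. Writing $Z_j=F_{\sigma_j}$ for a probability measure $\sigma_j$, this eliminates the absolutely continuous part of $\sigma_j$ off $[-M,M]$; ruling out the remaining singular components requires extra work, since a priori $\sigma_j$ could carry atoms or singular continuous mass there. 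The key additional input is the identity $F_{\mu_1}(Z_1(z))=F_\mu(z)$: a genuinely non-removable singularity of $Z_1$ at a point $z_0\in\mathbb{R}\setminus[-M,M]$ would propagate through this composition into a non-analyticity of $F_\mu$ at $z_0$, contradicting the known analyticity of $F_\mu$ there. Combining this observation with the Nevanlinna representation of $Z_j$ and the analyticity at $\infty$ of $Z_1+Z_2-z=F_\mu(z)$ shows $\mathrm{supp}\,\sigma_j\subset[-M,M]$, so $Z_1,Z_2$ extend analytically to $\mathbb{C}\setminus[-M,M]$.

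With this extension in hand, the Laurent expansion $Z_1(z)=z+c_0+c_1/z+\cdots$ at $\infty$ furnishes an analytic right-inverse $Z_1^{(-1)}$ defined on $\{w\in\mathbb{C}:|w|>R\}$ for some $R>M$, with real values on $\mathbb{R}\cap\{|w|>R\}$. The subordination identity then reads $F_{\mu_1}(w)=F_\mu(Z_1^{(-1)}(w))$ on $\{|w|>R\}$, extending $F_{\mu_1}$ analytically to this domain with real boundary values on $\mathbb{R}\cap\{|w|>R\}$. Consequently $G_{\mu_1}=1/F_{\mu_1}$ extends meromorphically to $\mathbb{C}\setminus[-R,R]$ with real boundary values, and Stieltjes inversion yields $\mathrm{supp}\,\mu_1\subset[-R,R]$. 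Interchanging the roles of $(Z_1,\mu_1)$ and $(Z_2,\mu_2)$ gives compactness of $\mathrm{supp}\,\mu_2$, completing the proof.
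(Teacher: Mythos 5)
Your overall route is viable and is genuinely different from the paper's, but the one step you yourself flag as the main obstacle is not actually closed by the argument you give. The claim that ``a genuinely non-removable singularity of $Z_1$ at $z_0$ would propagate through $F_{\mu_1}(Z_1(z))=F_\mu(z)$ into a non-analyticity of $F_\mu$ at $z_0$'' is not justified as stated: for an atom of the representing measure it can be made to work (then $Z_1(z)\to\infty$ and $F_{\mu_1}(w)\sim w$ forces $F_\mu(z)\to\infty$), but for a singular continuous component there is no off-the-shelf ``propagation'' principle, and a.e.\ vanishing of $\Im Z_j(x+i0^+)$ only kills the absolutely continuous part, exactly as you note. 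The clean way to finish this step --- using only ingredients you already name --- is to avoid pointwise boundary values altogether: write $Z_j(z)=\alpha_j+z+\int\bigl(\frac1{t-z}-\frac t{1+t^2}\bigr)(1+t^2)\,\nu_j(dt)$ with $\nu_j\ge0$ finite; then $Z_1+Z_2-z=F_\mu(z)$ exhibits $(1+t^2)(\nu_1+\nu_2)(dt)$ as the Herglotz representing measure of the Nevanlinna function $F_\mu(z)-z$, which is analytic and real on $\mathbb{R}\setminus[-M,M]$, so the Stieltjes--Perron inversion formula applied to the \emph{measure} (not to a.e.\ boundary densities) gives $(\nu_1+\nu_2)(\mathbb{R}\setminus[-M,M])=0$, hence each $\nu_j$ is supported in $[-M,M]$ and each $Z_j$ continues analytically across $\mathbb{R}\setminus[-M,M]$. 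With that repaired, your inversion of $Z_1$ near $\infty$ and the pullback $F_{\mu_1}(w)=F_\mu(Z_1^{(-1)}(w))$ go through (note also that $F_{\mu_1}$ has no real zeros for $|w|>R$, since $F_\mu\ne0$ off $[-M,M]$, so $G_{\mu_1}=1/F_{\mu_1}$ is actually analytic, not merely meromorphic, there).

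For comparison, the paper never inverts the subordination function. It expands $\frac1{u-Z(z)}$ as the Cauchy transform of a kernel $\sigma(u,ds)$, obtains the disintegration $\mu(S)=\int\sigma(u,S)\,\mu_1(du)$, and concludes that for $\mu_1$-a.e.\ $u_0$ the measure $\sigma(u_0,\cdot)$ lives on $[-d,d]$; Krein's Theorem~\ref{3.5th} then makes $Z(z)-u_0$ real and of a definite sign outside $[-d,d]$, and the Nevanlinna representation of $Z$ (with $\nu$ supported in $[-d,d]$, obtained as above by inversion) yields a sign contradiction at $x=u_0/2$ once $|u_0|$ is large, bounding the support of $\mu_1$ directly. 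Your approach buys a slightly more quantitative conclusion (an explicit annulus $|w|>R$ on which $G_{\mu_1}$ is analytic), at the cost of the inversion step; the paper's argument trades that for the measure-theoretic disintegration plus Krein's theorem. Both hinge on the same essential fact --- that the representing measure of the subordination function is supported in the support of $\mu$ --- which is precisely the point your write-up leaves insufficiently justified.
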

\begin{proof}
It suffices to prove the~lemma for the~measure $\mu_1$. 
The~proof for the~measure $\mu_2$ is similar.
By (\ref{3a.1a}), there exists $Z(z)\in\mathcal F$ such that
$F_{\mu}(z)=F_{\mu_1}(Z(z)),\,z\in\Bbb C^+$. Hence we obtain the~relation
\begin{equation}\label{3.6lem1}
\int\limits_{[-d,d]}\frac{\mu(du)}{z-u}=
\int\limits_{\Bbb R}\frac{\mu_1(du)}{Z(z)-u},\quad z\in\Bbb C^+,
\end{equation}
with $0<d<\infty$. Since $z/(Z(z)-u)\to 1$ as $z\to\infty$ non-tangentially to $\mathbb R$
and $\Im (Z(z)-u)\ge 0$ for $z\in\Bbb C^+$, then $Z(z)-u\in\mathcal F$ and we may
write
\begin{equation}\label{3.6lem2}
\frac 1{u-Z(z)}=\int\limits_{\Bbb R}\frac{\sigma(u,ds)}{s-z},\quad
z\in\Bbb C^+,
\end{equation}
where $\sigma(u,ds)$ is a~probability measure for every $u\in\Bbb R$ and $\sigma(u,S)$ is 
a~measurable function for every Borel $S$ set in $\Bbb R$. Using
this representation we deduce from (\ref{3.6lem1})
\begin{equation}\label{3.6lem3}
\mu(S)=\int\limits_{\Bbb R}\sigma(u,S)\,\mu_1(du)
\end{equation}
for every Borel $S$ set in $\Bbb R$. Let $S_0:=(-\infty,-d)\cup(d,\infty)$. We see from (\ref{3.6lem3}) 
that $\mu_1(A)=1$, where $A:=\{u\in\mathbb R:\sigma(u,S_0)=0\}$. 
Therefore, 
for every point $u_0\in A$, the~measure $\sigma(u_0,ds)$ has support contained in $[-d,d]$.    

It remains to show that $\mu_1$ has a~bounded support. Return to
(\ref{3.6lem2}) with $u=u_0\in A$. By Theorem~\ref{3.5th}, the~function
$Z(z)-u_0$ is holomorphic and real for $z=x<-d$ and for $z=x>d$. Since
$Z(z)$ admits the representation 
$$
Z(z)=\alpha+z+\int\limits_{\Bbb R}\Big(\frac 1{t-z}-\frac t{1+t^2}\Big)
(1+t^2)\,\nu(dt),
$$ 
where $\alpha\in\Bbb R$ and $\nu$ is a~finite nonnegative measure,
it follows from the~Stieltjes--Perron inversion formula that the~measure $\nu$ 
has bounded support contained in $[-d,d]$. Thus
\begin{equation}\label{3.6lem5}
Z(z)-u_0=\gamma-u_0+z+\int\limits_{[-d,d]}\frac{(1+t^2)\,\nu(dt)}{t-z},
\end{equation}
where $\gamma\in\Bbb R$. The~parameter $\gamma$ and the~measure
$\nu$ depend on $Z$ only and do not depend on $u_0$. Let $u_0>0$ and
be sufficiently large, i.e., $u_0>c(Z)>0$. Then, by (\ref{3.6lem5}),
$Z(x)-u_0<0$ for $x=u_0/2>2d$, a~contradiction with (\ref{3.6lem2}) 
for $u=u_0$ and $x=u_0/2$. An~analogous argument holds for $u_0<0$. Hence there exists
$c(Z)>0$ such that the~points $u_0\in A$ 
satisfy the~inequality $|u_0|\le c(Z)$. The~lemma is proved.
\end{proof}

\section{Auxiliary results on special functional equations.}

In this section we first describe some results (see Kagan, Linnik, Rao~\cite{KLR:1973})
on continuous solutions of special equations which were used to
characterize distributions via 
independence of linear statistics and  
identical distribution of linear statistics. 
\begin{lemma}\label{3.1aal}
Consider the~following equation, for $|u|<\delta_0,\,|v|<\delta_0$,
\begin{equation}
\psi_1(u+b_1v)+\dots+\psi_r(u+b_rv)=A(u)+B(v)+P_k(u,v),\notag
\end{equation}
where $P_k$ is a~polynomial of degree $k$; $\psi_j,\,A$ and $B$ are complex valued
functions of two real variables $u$ and $v$. We assume that

$(i)$  
the~numbers $b_j$ are all distinct 

$(ii)$ the~functions $A,\,B$, and $\psi_j$ are continuous. 

Then, in some neighborhood of the~origin, all the~functions $A,\,B$, and $\psi_j$ 
are polynomials of degree at most $\le\max(r,k)$. 
\end{lemma}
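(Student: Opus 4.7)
The plan is to use the classical finite-difference method. The underlying tool is Fr\'echet's theorem: a continuous function $f$ of one real variable satisfying $\Delta_{h_1}\Delta_{h_2}\cdots\Delta_{h_{N+1}} f\equiv 0$ for all $h_1,\dots,h_{N+1}$ small enough is a polynomial of degree at most $N$ in a neighborhood. The strategy is to apply difference operators in $u$ and $v$ to the given equation in order to eliminate, one at a time, the functions $A$, $B$, $P_k$ and the~$\psi_j$'s, thereby producing an identity of the form $\widetilde\psi_1(u+b_1v)\equiv 0$ where $\widetilde\psi_1$ is an iterated difference of $\psi_1$ of total order $\le\max(r,k)+1$. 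Fr\'echet's theorem then forces $\psi_1$ to be a polynomial of degree $\le\max(r,k)$ near $0$; permuting indices treats each $\psi_j$, and substituting back in the original equation gives the conclusion for $A$ and $B$.

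Let $\Delta_h^u$ and $\Delta_h^v$ denote the partial difference operators; they commute, annihilate functions of $v$ and $u$ alone respectively, and reduce by at least one the total $(u,v)$-degree of any polynomial on which they act non-trivially. The key observation is that each of these operators (as well as the shift-difference $D_t^{(i)} f(u,v):=f(u-b_it,v+t)-f(u,v)$ introduced below) becomes, when applied to a function $\psi_j(u+b_jv)$, simply a first-order difference of $\psi_j$ as a function of its single argument $x=u+b_jv$, with explicitly computable step: step $b_jh$ for $\Delta_h^v$, step $h$ for $\Delta_h^u$, and step $(b_j-b_i)t$ for $D_t^{(i)}$. Consequently the composition of $N$ such operators applied to $\psi_j(u+b_jv)$ produces the $N$-fold iterated difference of $\psi_j$ at $x=u+b_jv$.

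First I would apply $D_{t_r}^{(r)},D_{t_{r-1}}^{(r-1)},\dots,D_{t_2}^{(2)}$ in succession. Since $u+b_iv$ is invariant under $(u,v)\mapsto(u-b_it,v+t)$, the operator $D_t^{(i)}$ annihilates $\psi_i(u+b_iv)$ while converting each other $\psi_j(u+b_jv)$ into a first-order difference with nonzero step $(b_j-b_i)t$, where distinctness of the $b_j$ is used crucially. After these $r-1$ steps, $\psi_2,\dots,\psi_r$ are eliminated; the functions $A$, $B$ become $(r-1)$-fold differences of themselves in $u$ and $v$ respectively; and $P_k$ is transformed into a polynomial of total degree at most $\max(0,k-r+1)$. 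Next I would apply $\Delta_{\tau_1}^v$, killing the $(r-1)$-fold difference of $A$; then $\Delta_{\sigma_1}^u$, killing the resulting difference of $B$. Further alternating applications $\Delta^v,\Delta^u$ reduce the remaining polynomial by one in total degree at each step until it vanishes. A direct count shows that in all cases the total number of operators applied is at most $\max(r,k)+1$, yielding an identity
\begin{equation}\notag
\widetilde\psi_1(u+b_1v)\equiv 0,
\end{equation}
valid in a neighborhood of the origin, where $\widetilde\psi_1$ is an iterated difference of $\psi_1$ of order at most $\max(r,k)+1$. Since $u+b_1v$ sweeps out an interval as $(u,v)$ ranges over a neighborhood of $0$, Fr\'echet's theorem yields that $\psi_1$ is a polynomial of degree at most $\max(r,k)$ in a neighborhood. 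By permuting roles the same conclusion holds for every $\psi_j$; then $A(u)=\sum_j\psi_j(u+b_jv_0)-B(v_0)-P_k(u,v_0)$ at any fixed $v_0$ exhibits $A$ as a polynomial of the same bounded degree, and similarly for $B$.

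The main obstacle is the bookkeeping: one must verify that the step parameters $t_j,\tau_i,\sigma_i$ can be chosen inside a small neighborhood of $0$ so that the final iterated difference $\widetilde\psi_1$ is a genuine non-degenerate difference (i.e., that all the steps $(b_1-b_j)t_j$, $b_1\tau_i$, $\sigma_i,\dots$ are nonzero, which follows from the distinctness of the $b_j$), and one must track the degree reduction sharply to obtain the exponent $\max(r,k)$ rather than the cruder $r+k$. These combinatorial details are standard and are carried out in~\cite{KLR:1973}.
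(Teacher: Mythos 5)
The paper does not actually prove this lemma; it is quoted from Kagan--Linnik--Rao \cite{KLR:1973}, and your finite-difference argument (eliminate each $\psi_i$, $i\ge 2$, by the shift-difference along the direction $(-b_i,1)$, then kill $A$, $B$ and the residual polynomial by alternating $\Delta^v,\Delta^u$, and finish with Fr\'echet's theorem) is precisely the proof given there, with the operator count $\max(r,k)+1$ correctly yielding the stated degree bound. One caveat: the $\Delta^v$ steps acting on $\psi_1$ are $b_1\tau$, so your argument --- and indeed the conclusion for the individual $\psi_j$ --- requires the $b_j$ to be nonzero as well as distinct (if some $b_j=0$ one can only conclude that $\psi_j+A$ is a polynomial, as the counterexample $\psi_j=-A$ arbitrary continuous shows); this is harmless for the paper, since in its application the effective coefficients are the nonzero ratios $b_j/a_j$.
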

Consider the~equation
\begin{equation}\label{3.1}
\int\limits_0^1v(st)\,dQ_1(s)=\int\limits_0^1v(st)\,dQ_2(s),\quad\text{for all}\quad
0<t<1,
\end{equation}
for a~bounded continuous function $v(t)$ defined on $(0,1)$. Here $Q_1(s)$
and $Q_2(s)$ are nondecreasing functions satisfying the~condition
\begin{equation}\label{3.2}
\int\limits_0^1s^{-b}\,d(Q_1(s)+Q_2(s))<\infty
\end{equation}
for some $b>0$. We assume that the relation (\ref{3.1}) is nondegenerate, 
i.e., $Q_2(s)-Q_1(s)\not\equiv const$.

Applying a~Mellin transform to (\ref{3.1}) we easily obtain, for $Q(s):=Q_1(s)-Q_2(s)$,
\begin{equation}\label{3.3}
\int\limits_0^1t^{z-1}\,dt\int\limits_0^1v(st)\,dQ(s)
=\int\limits_0^1s^{-z}\,dQ(s)\int\limits_0^st^{z-1}v(t)\,dt=0
\quad \text{for all}\quad 0<\Re z<b.
\end{equation}
In view of (\ref{3.2}) we deduce from 
(\ref{3.3}), for $0<\Re z<b$,
\begin{equation}\label{3.4}
\Lambda(z)X(z;v)-K(z;v)=0,
\end{equation}
where
\begin{equation}\label{3.5}
\begin{split}
\Lambda(z):=\int\limits_0^1s^{-z}\,dQ(s),\qquad
X(z;v):=\int\limits_0^1t^{z-1}v(t)\,dt,\\
K(z;v):=\int\limits_0^1s^{-z}\,dQ(s)\int\limits_s^1t^{z-1}v(t)\,dt.
\end{split}
\end{equation}
The~functions $\Lambda(z)$ and $K(z;v)$ are analytic in the~half-plane
$\Re z<b$, and the~function $X(z;v)$ is analytic in the~half-plane
$\Re z>0$. We use the~relation (\ref{3.4}) for analytic continuation of
$X(z;v)$ into the~half-plane $\Re z\le 0$ as a~meromorphic function.
Keeping the~same notation, we have
\begin{equation}\label{3.6}
X(z;v)=K(z;v)/\Lambda(z),\qquad \Re z<b.
\end{equation}
The~singularities of $X(z;v)$ in the~half-plane $\Re z\le 0$ happen 
to be poles distributed among the~zeros of $\Lambda(z)$.

Taking an~arbitrary $\lambda>0$,
the~inversion formula for the~Mellin transform yields 
\begin{equation}\label{3.6a}
\int\limits_0^t u^{\lambda-1}\log(t/u)v(u)\,du=\frac 1{2\pi i}
\int\limits_{x-i\infty}^{x+i\infty}t^{\lambda-z}\frac{K(z;v)}{(z-\lambda)^2
\Lambda(z)}\,dz,\,\,0<x<\min(b,\lambda).
\end{equation}

Let $z_0$ be some zero of $\Lambda(z)$ of multiplicity $m_0$ in 
the~half-plane $\Re z\le 0$. We see that
\begin{equation}\label{3.7}
Res\Big(t^{\lambda-z}\frac{K(z;v)}{(z-\lambda)^2\Lambda(z)}\Big)=P_{z_0}(\log t)
t^{\lambda-z_0},\quad 0<t<1,
\end{equation}
where $P_{z_0}(t)$ is a~polynomial of degree at most $m_0-1$.
These residues may depend on $\lambda$, but it easily seen that
the~degree of the~polynomial $P_{z_0}(t)$ does not depend on $\lambda$.

If $P_{z_0}(t)\not\equiv 0$, we call the~number $-z_0$ 
an~{\it active exponent} of the~solution $v(t)$ and the~number $deg P_{z_0}(t)+1$ 
will be called 
the~{\it multiplicity} of the~active exponent $\xi=-z_0$. The~leading
coefficient of $P_{z_0}(t)$ will be denoted $a_{-z_0}(v)$ and the~degree
by $m_{-z_0}(v)$. All the~active exponents of $v(t)$ are located in 
the~half-plane $\Re z\ge 0$. 

If the~number of the~active exponents $\{z_k\}_{k=1}^d$ of $v(t)$ 
is finite, then, by Jordan's lemma on residues, it follows from (\ref{3.6a})
that
\begin{equation}\label{3.7a}
\int\limits_0^t u^{\lambda-1}\log(t/u)v(u)\,du=t^{\lambda}
\sum_{k=1}^d P_{z_k}(\log t)t^{-z_k},\quad 0<t<1.
\end{equation}

We shall introduce the~notation
\begin{equation}\label{3.8}
\sigma_1(v):=\inf \{\Re \xi,\,\,\,\xi\text{ active exponents of } v(t)\}.
\end{equation}

We need the~following results on active exponents and differentiable
solutions $v(t)$.
\begin{lemma}\label{3.1lem}
If $v(t)\ge 0$ is a~continuous solution of $(\ref{3.1})$ such that
$v(t)\to 0$ as to $0+$, then $\sigma_1(v)$ is an~active exponent and
\begin{equation}\label{3.9}
\sigma_1(v)>0\qquad\text{and}\qquad a_{\sigma_1}(v)>0.
\end{equation}
\end{lemma}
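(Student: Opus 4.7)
The strategy is to combine the decay $v(t)\to 0$ as $t\to 0+$ with a contour shift in the inversion formula (\ref{3.6a}), and then to invoke positivity of $v$ to pin down the sign of the leading term. As a preliminary estimate I would establish
\[
F(\lambda,t):=\int_0^t u^{\lambda-1}\log(t/u)\,v(u)\,du=o(t^\lambda)\quad(t\to 0+)
\]
for every $\lambda>0$. The substitution $u=ts$ yields $F(\lambda,t)=t^\lambda\int_0^1 s^{\lambda-1}\log(1/s)v(ts)\,ds$, and dominated convergence (using boundedness of $v$ together with $v(ts)\to 0$ pointwise) produces the claim.

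Next I would shift the contour in (\ref{3.6a}) from $\Re z=x\in(0,\min(b,\lambda))$ to a line $\Re z=x'<0$ chosen to avoid the zeros of $\Lambda$. Because $X(z;v)$ is absolutely convergent and hence analytic on $\Re z>0$ and coincides with $K(z;v)/\Lambda(z)$ on $0<\Re z<b$, no residues are picked up from zeros of $\Lambda$ with positive real part; only zeros $z_k$ with $x'<\Re z_k\le 0$ contribute, each yielding $t^{\lambda-z_k}P_{z_k}(\log t)=t^{\lambda+\xi_k}P_{-\xi_k}(\log t)$ with $\xi_k=-z_k$ an active exponent. The remaining integral on $\Re z=x'$ is $O(t^{\lambda-x'})=o(t^\lambda)$. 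Combined with the preliminary estimate this gives
\[
\sum_{\xi_k:\,0\le\Re\xi_k<-x'} t^{\xi_k}P_{-\xi_k}(\log t)=o(1)\quad(t\to 0+).
\]

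To prove $\sigma_1(v)>0$, suppose for contradiction that some active exponent $\xi_k=i\gamma_k$ lies on the imaginary axis. Setting $s=\log t\to-\infty$ and dividing the (finite, by discreteness of zeros of $\Lambda$) residue sum by $s^M$, where $M$ is the top degree among polynomials corresponding to terms with $\Re\xi_k=0$, forces $\sum_{\deg=M} a_k e^{i\gamma_k s}\to 0$; but a nontrivial trigonometric polynomial cannot vanish at infinity, so every such leading coefficient is zero, contradicting the activity of $\xi_k$. Hence no active exponent has $\Re\xi_k=0$, and by discreteness $\sigma_1(v)$ is attained and strictly positive. For the final claims, I would isolate in the residue sum the finitely many terms with $\Re\xi_k=\sigma_1(v)$. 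Reality of $v$ gives $X(z;v)=\overline{X(\bar z;v)}$, so these terms pair by complex conjugation into a real expression $t^{\sigma_1}\sum_k e^{-i\gamma_k\log t}Q_k(\log t)$, while the remaining active exponents contribute $o(t^{\sigma_1})$. From $F(\lambda,t)\ge 0$ it follows that the top-degree (in $\log t$) trigonometric polynomial in $s=\log t$ is non-negative as $s\to-\infty$; since a non-negative nontrivial almost-periodic function has strictly positive mean, its $\gamma_k=0$ term must have strictly positive leading coefficient. This both forces $\sigma_1(v)$ itself (a real number) to be an active exponent and yields $a_{\sigma_1}(v)>0$.

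The main technical obstacle I anticipate is the quantitative $L^1$-control of $K(z;v)/((z-\lambda)^2\Lambda(z))$ along vertical lines, needed to justify both the contour shift and the $o(t^\lambda)$ bound on the shifted integral; this will rely on vertical-line decay estimates (via an integration-by-parts in the definition of $K$) together with separation of the contour from the discrete zero set of $\Lambda$.
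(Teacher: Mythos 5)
The paper itself does not prove this lemma: it is quoted from Kagan--Linnik--Rao \cite{KLR:1973}, and the classical proof is quite different from the one you propose. Its key tool is the Landau-type theorem on Mellin integrals of non-negative functions (invoked elsewhere in this paper, in the proof of Lemma~\ref{4.1lem}, as ``L\'evy's and Raikov's theorem''): since $v\ge 0$, the real point $-\sigma$ of the abscissa of convergence of $X(z;v)=\int_0^1 t^{z-1}v(t)\,dt$ is a singular point of $X$; since $X=K/\Lambda$ is meromorphic in $\Re z<b$, that singularity is a pole, so $\sigma$ is a \emph{real} active exponent; all poles of $X$ lie in $\Re z\le -\sigma$, so $\sigma_1(v)=\sigma$ and the infimum is attained; $\sigma>0$ because $xX(x)=\int_0^1 v(u^{1/x})\,du\to v(0+)=0$ as $x\downarrow 0$ excludes a pole at the origin; and the leading coefficient is positive because $X(x)$ increases to $+\infty$ as $x\downarrow -\sigma$ by positivity of $v$, which fixes the sign of the leading Laurent coefficient at $-\sigma$. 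None of this requires moving contours or controlling $1/\Lambda$ on vertical lines.

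Your route has genuine gaps exactly where you anticipate trouble, and the trouble is structural rather than technical. First, to shift the contour to $\Re z=x'<0$ and bound the remainder by $O(t^{\lambda-x'})$ you need $K(z;v)/\bigl((z-\lambda)^2\Lambda(z)\bigr)$ to be integrable on the whole line $\Re z=x'$ and controlled on the horizontal segments closing the contour; for a general $Q$ satisfying only (3.2) --- and even for Dirichlet polynomials, which have infinitely many zeros in every substrip of their critical strip --- no lower bound on $|\Lambda|$ along a vertical line is available, and ``separation from the discrete zero set'' does not supply one, since zeros may accumulate toward any such line. Second, the residue sum over zeros in $x'<\Re z_k\le 0$ need not be finite (discreteness of the zero set does not give finiteness in an unbounded vertical strip), so your trigonometric-polynomial arguments, which require finitely many frequencies, do not apply as stated; in particular there may be infinitely many active exponents with $\Re\xi_k=\sigma_1$. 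Third, even granting your exclusion of purely imaginary active exponents, the step ``by discreteness $\sigma_1(v)$ is attained'' fails: the infimum of real parts over infinitely many zeros in a strip need not be achieved. The Landau-theorem argument bypasses all three difficulties simultaneously, which is why it is the standard proof; if you want to keep your contour-shift strategy, you must at minimum restrict to the Dirichlet-polynomial case and prove the required vertical-line lower bounds for $|\Lambda|$ and the summability of the residue series, which is a substantial undertaking.
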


\begin{lemma}\label{3.3lem}
If under the~hypothesis of Lemma~$\ref{3.1lem}$ the~function $v(t)$ has 
a~continuous derivative $v^{(n)}(t)$ for some $n\ge 1$ and for all $0<t<1$
and the~following limit exists and is finite
$$
\lim_{t\to 0+}v^{(n)}(t)=v^{(n)}_+(0),
$$
then all the~active exponents $\xi$ of $v(t)$ which are not simultaneously integers 
and simple active exponents satisfy the~condition $\Re \xi>n$.
\end{lemma}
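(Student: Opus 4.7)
The plan is to extract two asymptotic expansions of $I(t):=\int_{0}^{t}u^{\lambda-1}\log(t/u)\,v(u)\,du$ as $t\to 0+$ and compare them via uniqueness of expansions in the scale $\{t^{\xi}(\log t)^{j}\}$. One expansion comes from the Mellin inversion formula (\ref{3.6a}) after shifting the contour leftward past active exponents; the other comes directly from a Taylor expansion of $v$ at $0+$, made available by the hypothesis on $v^{(n)}$.

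For the Taylor side, successive integration shows that $v^{(k)}(0+)$ exists and is finite for $k=0,1,\dots,n$, while the hypothesis of Lemma~\ref{3.1lem} forces $v(0+)=0$. Taylor's formula with Peano remainder therefore gives $v(u)=\sum_{k=1}^{n}(v^{(k)}(0+)/k!)\,u^{k}+o(u^{n})$. Since $\int_{0}^{t}u^{\lambda+k-1}\log(t/u)\,du=t^{\lambda+k}/(\lambda+k)^{2}$ for $k\ge 0$, substitution yields
$$I(t)=\sum_{k=1}^{n}\frac{v^{(k)}(0+)}{k!\,(\lambda+k)^{2}}\,t^{\lambda+k}+o(t^{\lambda+n}),\qquad t\to 0+,$$
which consists of pure integer powers with no logarithmic factors.

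On the residue side, I would shift the contour in (\ref{3.6a}) from $\Re z=x$ leftward to $\Re z=-x'$, with $x'>n$ chosen to avoid the discrete zero set of $\Lambda$. The shift collects the residues $P_{z_{k}}(\log t)\,t^{\lambda-z_{k}}$ at zeros $z_{k}$ with $-x'<\Re z_{k}<x$, that is, at active exponents $\xi_{k}=-z_{k}$ with $0\le\Re\xi_{k}<x'$. Using $|t^{\lambda-z}|=t^{\lambda+x'}$ on the new contour together with decay estimates for $K(z;v)/\Lambda(z)$ on vertical lines (based on (\ref{3.2}) and discreteness of the zero set), the remaining contour integral is $O(t^{\lambda+x'})=o(t^{\lambda+n})$, and hence
$$I(t)=\sum_{\xi\ \text{active},\ \Re\xi\le n}P_{\xi}(\log t)\,t^{\lambda+\xi}+o(t^{\lambda+n}).$$

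Comparing both expansions, the difference is $o(t^{\lambda+n})$, while the residue side is a linear combination of functions $t^{\lambda+\xi}(\log t)^{j}$ which are asymptotically linearly independent for distinct $(\xi,j)$ in the strip $\Re\xi\le n$. Matching therefore forces each active $\xi$ with $\Re\xi\le n$ to be a (positive) integer and the polynomial $P_{\xi}$ to be constant, so $\xi$ is a simple integer active exponent; equivalently, any active $\xi$ that is either non-integer or has multiplicity at least $2$ must satisfy $\Re\xi>n$. The main obstacle is the rigorous justification of the contour shift, namely establishing enough decay of $K(z;v)/\Lambda(z)$ on vertical lines far from the zeros of $\Lambda$ so that the integral on $\Re z=-x'$ is genuinely $O(t^{\lambda+x'})$; this hinges on the integrability condition (\ref{3.2}) and on standard estimates for Mellin transforms of bounded functions in vertical strips.
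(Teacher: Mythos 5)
The paper does not prove this lemma at all: Section~4 imports it verbatim from Kagan--Linnik--Rao \cite{KLR:1973} (it is Linnik's classical lemma on active exponents of differentiable solutions), so there is no in-paper argument to compare against. Your outline is in fact the classical route: expand $I(t)=\int_0^t u^{\lambda-1}\log(t/u)v(u)\,du$ once via the one-sided Taylor formula (which is legitimate here -- boundedness of $v^{(n)}$ near $0$ does propagate down to give finite $v^{(k)}(0+)$ for all $k\le n$, and $v(0+)=0$ from Lemma~\ref{3.1lem}) and once via residues, then use the asymptotic linear independence of $t^{\xi}(\log t)^j$ to force every active exponent with $\Re\xi\le n$ to be a simple integer. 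The Taylor half and the matching half of your argument are correct.

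The genuine gap is the one you flag but do not close: the contour shift from $\Re z=x$ to $\Re z=-x'$ with $x'>n$. Condition (\ref{3.2}) gives analyticity of $\Lambda$ and boundedness of $K(z;v)$ on vertical lines in $\Re z<0$ (and the factor $(z-\lambda)^{-2}$ gives integrability), but it gives \emph{no lower bound} on $|\Lambda(z)|$, neither on the shifted vertical line nor on the horizontal segments $\Im z=\pm T$ that must be sent to infinity. For general nondecreasing $Q_1,Q_2$ satisfying only (\ref{3.2}), the zeros of $\Lambda$ need not be confined to a strip, need not be separated, and $|\Lambda|$ can decay along vertical lines, so ``discreteness of the zero set'' is not enough and the shift can fail as stated. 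What rescues the argument in the intended application is that $\Lambda(-z)=\Lambda_1(z)$ is a finite Dirichlet polynomial $\sum|a_j|^z-\sum|b_j|^z$: its zeros lie in a vertical strip, and by almost periodicity one can choose a sequence $T_k\to\infty$ of heights and an abscissa $-x'$ avoiding the zeros on which $|\Lambda|$ is bounded below uniformly, which is exactly the input Linnik's proof uses. Without invoking this structure (or some equivalent lower bound hypothesis on $\Lambda$), the expansion
$I(t)=\sum_{\Re\xi\le n}P_{\xi}(\log t)\,t^{\lambda+\xi}+o(t^{\lambda+n})$
is not established, and the whole comparison collapses. A secondary, smaller point: the ``asymptotic linear independence'' step for exponents sharing a real part but differing in imaginary part also deserves a line of proof (e.g.\ a Bohr-mean argument for $\sum c_je^{i\gamma_js}\to 0$), though this is standard.
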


\section{Proof of an~analogue of the Darmois--Skitovich theorem.}

In this section we prove Theorem~\ref{2.1ath}.

In order to prove Theorem~\ref{2.1ath} we follow the~proof of 
the~classical Darmois-Skitovich theorem (see \cite{KLR:1973}). 
\begin{proof}{\it Necessity}.
Assume that free random variables $T_1,\dots,T_n$ with distributions 
$\mu_1,\dots$, $\mu_n$, respectively, are such that the~linear statistics $L_1$ and $L_2$
(see (\ref{2.1})) are free and the~coefficients $a_j,b_j$ of these  statistics 
satisfy the~assumptions of Theorem~\ref{2.1ath}.
Then for every pair of real numbers $(u,v)$ the~linear statistics $u L_1$ and
$v L_2$ are free and we have the~relation
\begin{align}
L:=u L_1+v L_2&=(u a_1+v b_1)T_1+\dots+(u a_m+v b_m)T_m\notag\\
&+(u a_{m+1}+v b_{m+1})T_{m+1}+\dots+(u a_n+v b_n)T_n.\label{DS.1}
\end{align}
Using (\ref{2.3*}), we deduce from (\ref{DS.1}) that 
\begin{align}
\phi_{(u a_1+v b_1)T_1}(z)&+\dots
+\phi_{(u a_n+v b_n)T_m}(z)\notag\\
&=\phi_{u L_1}(z)+\phi_{v L_2}(z)-\phi_{(u a_{m+1}+v b_{m+1})T_{m+1}}(z)-\dots
-\phi_{(u a_n+v b_n)T_n}(z)
\label{DS.2}
\end{align}
for $z\in\Gamma_{\alpha,\beta}$ with some $\alpha>0$ and $\beta>0$, 
where all functions $\phi_{(u a_j+v b_j)T_j}(z),j=1,\dots,n$,
and $\phi_{u L_1}(z),\,\phi_{v L_2}(z)$ are defined. Hence (\ref{DS.2}) holds
for $z=i$ and for $|u|\le\delta$ and $|v|\le\delta$ with sufficiently small $\delta>0$.
Note that the~functions $\phi_{(u a_{m+1}+v b_{m+1})T_{m+1}}(z),\dots,$ $\phi_{(u a_n+v b_n)T_n}(z)$ 
depend on $u$ or $v$ only.
Consider the~functions $\psi_j(w):=w\phi_{T_j}(i/w),\,j=1,\dots,n$, for $w\in\mathbb R$
and $|w|\le\delta'$ with sufficiently small $\delta'>0$. Since $\phi_{wT_j}(i)=w\phi_{T_j}(i/w)$, 
and $w\phi_{T_j}(i/w)\to 0$ as $w\to 0$, we see that (\ref{DS.2}) with $z=i$ has the~form 
\begin{equation}\label{DS3}
\psi_1(ua_1+vb_1)+\dots+\psi_m(ua_m+vb_m)=A(u)+B(v),\quad |u|<\delta, \,\,\,|v|<\delta,
\end{equation}
where $\psi_j,\,j=1,\dots,m$, and $A,B$ are complex-valued continuous functions, and $b_j/a_j,
\,j=1,\dots,m$ are all distinct. Then, by Lemma~\ref{3.1aal}, the~functions 
$\psi_j,\,j=1,\dots,m$, are polynomials of degree $\le m$. Therefore we have 
the~representation
\begin{equation}\label{DS4}
\phi_{T_j}(z)=z\sum_{s=0}^m\frac {d_{sj}}{z^{s}},\quad j=1,\dots,m,
\end{equation}
for $z\in\Gamma_{\alpha',\beta'}$ with some $\alpha'>0$ and $\beta'>0$, where $d_{sj}$
are complex valued coefficients. Since $\phi_{T_j}(iy)=o(y)$ as $y\to\infty$, all $d_{0j}=0$.
In view of the~relations
\begin{equation}\notag
\phi_{-T_j}(iy)=-\Re\phi_{T_j}(iy)+i\Im\phi_{T_j}(iy),\quad y\ge \beta',
\end{equation}
we see that
\begin{equation}\notag
2\Im\phi_{T_j}(iy)=\phi_{T_j}(iy)+\phi_{-T_j}(iy)=\sum_{l=1}^{[m/2]}(-1)^l\frac {d_{2l,j}}{y^{2l-1}}
\end{equation}
and
\begin{equation}\notag
2\Re\phi_{T_j}(iy)=\phi_{T_j}(iy)-\phi_{-T_j}(iy)=\sum_{l=0}^{[(m-1)/2]}(-1)^l\frac {d_{2l+1,j}}{y^{2l}}
\end{equation}
for $y\ge \beta'$. We easily deduce from the last two relations that the~coefficients $d_{sj}$ are real-valued. 
Then, by Proposition~\ref{3.5pr}, 
all moments $m_k(\mu_j)$ exist
and $d_{sj}=\kappa_{s+1}(\mu_j),\,s=1,\dots,m$ and $\kappa_{s}(\mu_j)=0$ for $s>m$.
By Proposition~\ref{3.4abcpr}, the~measures $\mu_j,\,j=1,\dots,m$, have compact supports.
We now return to the~relation~(\ref{DS3}). By (\ref{DS4}), the~functions on both sides
of (\ref{DS3}) are differentiable. Differentiating sequentially both sides of (\ref{DS3}) 
with respect to $u$ and $v$ we obtain a~relation from which (\ref{2.4}) follows immediately.

{\it Sufficiency}. Assume that free random variables $T_1,\dots,T_n$ satisfy the~assumptions of 
Theorem~\ref{2.1ath}. Consider mixed cumulants $\kappa_s(L_{j_1},L_{j_2},\dots,L_{j_s})$
such that $q$ indices $j_h$ are equal 1 and $s-q$ indices $j_h$ are equal 2. 
Then, by Propositions~\ref{3c.1pr},~\ref{3c.2pr}, and by (\ref{2.4}), 
$L_1$ and $L_2$ have vanishing mixed cumulants 
\begin{equation}\notag
\kappa_s(L_{j_1},L_{j_2},\dots,L_{j_s})
=\sum_{j=1}^ma_j^qb_j^{s-q}\kappa_s(\underbrace{T_j,\dots,T_j}_{s\rm\;times})=
\sum_{j=1}^ma_j^qb_j^{s-q}\kappa_s(T_j)=0
\end{equation}
for $s=2,\dots,m$ and $q=1,\dots,s-1$. In addition we clearly have $\kappa_s(L_{j_1},L_{j_2},
\dots,L_{j_s})=0$ for all $s\ge m+1$ and $q=1,\dots,s-1$. 
Hence, by Theorem~\ref{3c.1th}, the~linear forms $L_1$ and $L_2$ are free independent. 

The~theorem is completely proved.
\end{proof}

\section{Characterization of free cumulants}

First we shall prove Theorem~\ref{2.2ath}.
\begin{proof}
{\it Sufficiency}. 
We conclude from the definition of the domain $\Omega_{\varphi}$ that its boundary is 
a Jordan curve consisting of a finite number of curves parametrized by algebraic functions. Moreover, by
the~assumptions of the~theorem, there exist real numbers $a<0$ and $b>0$ such that 
$\gamma:=\gamma_1\cup\gamma_2\cup\gamma_3$, where $\gamma_1$ is the~half-line $z=x$ 
with $x\le a$, $\gamma_2$ is is a Jordan curve laying in $\mathbb C^+$ and connecting the point $a$ 
and $b$, $\gamma_3$ is the~half-line $z=x$ with $x\ge b$. 

Note that the~function $\varphi(z):\Omega_{\varphi}\to \Bbb C$ 
is analytic such that  
\begin{equation}\label{CH.C.2}
\lim_{R\to+\infty}\max_{|z|=R}|\varphi(z)|=0.
\end{equation}
We shall show that the~function $f:\Omega_{\varphi}\to\Bbb C$ 
defined via $z\mapsto z+\varphi(z)$ takes every value in $\mathbb C^+$ precisely once.
The~inverse $f^{(-1)}:\Bbb C^+\to \Bbb C^+$ thus defined is in the~class 
$\Cal F$.

Let $R$ be a~sufficiently large positive number.
For every fixed $w\in\Bbb C^+$
we consider a~closed rectifiable curve $\gamma_4$ 
consisting of a~curve $\gamma_{4,1}$, which is a~part of the~curve 
$\gamma$, connecting $-R$ to $R$, and
the~circular arc $\gamma_{4,2}:z=Re^{i\theta}$ with $0<\theta<\pi$ 
connecting $R$ to $-R$. The~curve $\gamma_4=\gamma_4(R)$ depends on $R$.
 
We see from the~construction of the~curve $\gamma_{4,1}$ that 
if $z$ runs  through $\gamma_{4,1}$ the~image $\zeta=f(z)$ lies on the~interval
$[-A_{-R},A_R]$, where $f(-R)=-A_{-R}$ and $f(R)=A_R$. Here
$A_{\pm R}\to\infty$ as $R\to\infty$. We note as well that if $z$ runs  
through $\gamma_{4,2}$ the~image $\zeta=f(z)$ lies in the~domain 
$|\zeta|\ge \min\{A_{-R},A_R\}/2,\,\Im \zeta>0$. 
 
Hence $f(z)$ winds around $w\in\mathbb C^+$ once when $z$ runs through $\gamma_4$, and it follows from the~argument
principle that there is a~unique point $z_0$ in the~interior of the~curve $\gamma_4$
such that $f(z_0)=w$. Since this relation holds for all
all curves $\gamma_{4}=\gamma_4(R)$ with sufficiently large $R>0$ , we deduce 
that the~point $z_0$ is unique in $\Omega_{\varphi}$.

Therefore the~inverse function $f^{(-1)}:\Bbb C^+\to\Omega_{\varphi}$ exists 
and is analytic on $\Bbb C^+$. By condition (\ref{CH.C.2}), $z/f^{(-1)}(z)\to 1$
as $z\to \infty$ non-tangentially to $\mathbb R$ and therefore $f^{(-1)}(z)\in\mathcal F$. 
Hence there exists a~probability measure $\mu$ such that $\varphi(z)=\phi_{\mu}(z)$ and
the~sufficiency of the~assumptions of the~theorem is proved.

{\it Necessity}. Let there exist a Jordan curve $\gamma$, laying in $\Omega_{\varphi}$ 
and connecting $0$ and $\infty$.
We shall show that in this case it does not exist
a~probability measure $\mu$ such that $\varphi(z)=\phi_{\mu}(z)$. Assume to the~contrary that
there exists a~probability measure $\mu$ such that $\varphi(z)=\phi_{\mu}(z)$ 
for $z\in\Gamma_{\alpha,\beta}$ with some positive $\alpha$ and $\beta$. Then
$\varphi(z)=\phi_{\mu}(z)$ for $z\in\mathbb C^+$
and $|z|\ge c$ with a~sufficiently large constant $c>0$. By Proposition~\ref{3.4abcpr},
$\mu$ has a compact support.  
It is obvious that the~relation
\begin{equation}\label{CH.C.3}
F_{\mu}(z+\varphi(z))=z
\end{equation}
holds for $z\in\mathbb C^+,\,|z|\ge c$ and therefore it holds for $z\in\Omega_{\varphi}$.  
Since
$\varphi(z)\to\infty$ as $z\to 0$, and $F_{\mu}(z)=(1+o(1))z$ as $z\to\infty$,  
the~relation (\ref{CH.C.3}) with $z\in\gamma$ and $z\to 0$ leads to a~contradiction. This proves the~necessity
of the~assumptions of the~theorem and completely proves the~theorem.
\end{proof}

{\bf Proof of Remark~\ref{2.2arem}}. Since probability measures corresponding to the sequences of free cumulants
of the set $S$ have the uniformly bounded support, the conclusion of the remark follows immediately
from the fact that $S$ is conditionally compact.
$\square$

{\bf Proof of Corollary~\ref{2.2acor}}.
Let $0<\kappa_4<\frac 14$. Without loss of generality we assume that $\kappa_3\le 0$. 
It follows from Theorem~\ref{2.2ath} that if the~sequence
$0,1,\kappa_3,\kappa_4,0,\dots$ is a~sequence of free cumulants of some probability measure
then the polynomial $P(r,x):=r^4-r^2-2\kappa_3xr+(1-4x^2)\kappa_4$ has at the least one positive root
for every fixed $x\in[-1,1]$. Denote $r_{\max}(x)$ the maximum of such roots.

Assume that $x\in[-1,-1/2]$. In this case $P(r,x),\,r>0$, has one positive root $r_{1,1}(x)=r_{\max}(x)$ 
and this root is a continuous function on $[-1,-1/2]$. Let $x\in(-1/2,0)$. Then $P(r,x)$
has two positive roots only, say $r_{1,2}(x)$ and $r_{2,2}(x)$ ($r_{1,2}(x)< r_{2,2}(x)=r_{\max}(x))$,
and $r_{1,2}(x)\to 0$ as $x\to -1/2$. These roots are continuous functions on $(-1/2,0)$ and 
$\lim_{x\to -1/2-0}r_{1,1}(x)=\lim_{x\to -1/2+0}r_{2,2}(x)$. 
In addition $P(r,x)>0$ for $r>r_{2,2}(x)$ and for $0<r<r_{1,2}(x)$.

Consider the function
\begin{equation}
\rho (x,\kappa_4):=r(x,\kappa_4)(1-2r^2(x,\kappa_4)),\quad\text{where}\quad r(x,\kappa_4):=\Big\{ 
\frac 12\Big(\frac 13+\sqrt{\frac 19-\frac 43(4x^2-1)\kappa_4}\Big)\Big\}^{1/2}\notag
\end{equation}
for $0\le x\le x_1=x_1(\kappa_4):=\min\{1,\frac 12\sqrt{1+\frac 1{12\kappa_4}}\}$.
Note that $x_1=1$ for $\kappa_4\le 1/36$ and $1/\sqrt 3<x_1<1$ for $1/36<\kappa_4<1/4$.

It is easy to verify that the function $\rho(x,\kappa_4)/x,\,0\le x\le x_1$, is strictly monotone
for $0<x<x_2$ and for $x_2\le x\le x_1$ and
has a unique minimum at the point $x_2=x_2(\kappa_4)
:=\min\{x_1,\frac 12\sqrt{\frac 1{\sqrt{\kappa_4}}-2}\}$. 
Note that $x_2=\frac 12\sqrt{\frac 1{\sqrt{\kappa_4}}-2}$ if $\kappa_4\ge 1/36$. 
In this case $r(x_2,\kappa_4)=\root 4\of{\kappa_4}$.
Note as well that $x_2=1$ for $\kappa_4\le 1/36$ and $x_2<x_1<1$ for $1/36<\kappa_4<1/4$.
 
Now let us show that if $0,1,\kappa_3,\kappa_4,0,0,\dots$
is a free cumulant sequence, then 
\begin{equation}\label{2.2acor.1a}
-\kappa_3\le \rho(x_2,\kappa_4)/x_2.
\end{equation}

Let to the contrary $-\kappa_3=\rho(x^*,\kappa_4)/x^*$ for some $x^*\in(0,x_2)$. 
Then we have the inequality 
\begin{equation}\label{2.2acor.1}
-\kappa_3<\rho(x,\kappa_4)/x\quad\text{ for }\quad 0<x<x^*\quad\text{and}\quad
-\kappa_3>\rho(x,\kappa_4)/x\quad\text{ for }\quad x^*<x<x_2.
\end{equation}

Fix a~parameter $b:=-(4x^2-1)\kappa_4$ with $x\in(0,x_1)$. The~line $y=ar+b$ is a~tangent 
to the~curve $y=r^2(1-r^2)$ at the point $r=r(x,\kappa_4)$ iff $a=2\rho(x,\kappa_4)$.
 
Let $x\in(0,1/2)$.  
Then $P(r,x)$ has a positive root
iff $-\kappa_3x\le \rho(x)$. Assume that $x^*\in(0,1/2)$ then,
by (\ref{2.2acor.1}), this inequality holds for $x\in(0,x^*)$ and
there exists $x\in(x^*,1/2)$ such that $P(r,x)>0$ for $r>0$, a contradiction 
with the assumptions of Theorem~\ref{2.2ath}. Hence $x^*\notin(0,1/2)$.

It is easy to see that, for every fixed $x\in(0,1/2)$, $P(r,x)$ has a positive
root iff $P(r,x)$ has two positive roots only, say $r_{1,3}(x)$ and $r_{2,3}(x)$ 
($r_{1,3}(x)\le r_{2,3}(x)=r_{\max}(x))$, and $r_{j,3}(x),\,j=1,2$, are continuous functions on $(0,1/2]$. 
In addition $\lim_{x\to -0}r_{1,2}(x)=\lim_{x\to +0}r_{2,3}(x)$ and
$P(r,x)>0$ for $r>r_{2,3}(x)$ and for $0<r<r_{1,3}(x)$. Hence we can conclude that $r_{\max}(x)$ 
is a continuous function for $[-1,1/2]$. 

Now assume that $x^*\in [1/2,x_2)$.
Let $x\in[1/2,x^*)$. It is easy to see in this case, in view of (\ref{2.2acor.1}), that
$P(r,x)$ has  
three positive roots, 
say $r_{1,4}(x)< r_{2,4}(x)<r_{3,4}(x)=r_{\max}(x)$. These functions are continuous for $x\in[1/2,x^*]$
and $\lim_{x\to 1/2+0}r_{2,4}(x)=\lim_{x\to 1/2-0}r_{1,3}(x)$, 
$\lim_{x\to 1/2+0}r_{3,4}(x)=\lim_{x\to 1/2-0}r_{2,3}(x)$. In addition $r_{\max}(x)$
is a continuous function for $[-1,x^*]$. By (\ref{2.2acor.1}), we see that, for $x\in(x^*,x_2)$,
$P(r,x)$ has only one positive root $r_{1,4}(x)=r_{\max}(x)$.
Since three positive roots of $P(r,x),\,r>0$, with fixed $x\in(0,1)$ coincide at the point $x=x_1$
only, we note that $\lim_{x\to x^*-0}r_{2,4}(x)=\lim_{x\to x^*-0}r_{3,4}(x)$ and 
$\lim_{x\to x^*+0}r_{\max}(x)<\lim_{x\to x^*-0}r_{\max}(x)$.
Then there exists a Jordan curve 
in $\mathbb C^+$ containing $0$ and $\infty$ on which $\Im (z+\varphi(z))>0$, a contradiction.

Thus $x^*=x_2$ and (\ref{2.2acor.1a}) is proved. We can rewrite this assumption in the following form.

If $0<\kappa_4\le 1/36$, then $x_1=x_2=1$ and we have the estimate $-\kappa_3\le \rho(1,\kappa_4)$.

If $1/36<\kappa_4<1/4$, then $x_1<1$ and $x_2\le x_1$. In this case we have the estimate 
$-\kappa_3\le \rho(x_2,\kappa_4)/x_2$.

Now we assume that (\ref{2.2acor.1a}) holds. Let us show that $0,1,\kappa_3,\kappa_4,0,0,\dots$
is a free cumulant sequence.

Repeating the previous arguments we conclude that $r_{\max}(x)$ is a continuous function for $x\in [-1,x_1]$.

Finally note that if $x\in(x_1,1]$ and $x_1<1$, then, for such fixed $x$ the polynomial $P(r,x)$ 
has one positive root $r_{1,5}(x)$ only. The function $r_{1,5}(x)$ is continuous on 
$x\in(x_1,1]$ and such that $\lim_{x\to x_1-0}r_{3,4}(x)=\lim_{x\to x_1+0}r_{1,5}(x)$ and  
$r_{1,5}(1)>0$. Therefore $r_{\max}(x)$ is a continuous function for all $x\in[-1,1]$ and 
$r_{\max}(\pm 1)>0$.

Hence the assumption (\ref{2.2acor.1a}) is necessary and sufficient in order  
that the~sequence $0,1,\kappa_3,\kappa_4,0,\dots$ is a~sequence
of free cumulants of some probability measure in the case $0<\kappa_4<1/4$.

Assume that $\kappa_4>1/4$. Then $P(r,0)>0,\,r>0$, and, by Theorem~\ref{2.2ath}, the~sequence 
$0,1,\kappa_3,\kappa_4,0,\dots$ is not a~sequence of free cumulants of a probability measure.

We now assume that   
$-\frac 1{12}<\kappa_4<0$. Note that $P(r,1)$ has a positive root iff
$-\kappa_3\le \rho(1,\kappa_4)$. But it is easy to see that under this condition
$P(r,1)$ has a positive root for every $x\in[-1,1]$ and $r_{\max}(x)$ is a continuous function
on $[-1,1]$. Hence the condition $-\kappa_3\le \rho(1,\kappa_4)$ is necessary and sufficient in order
that the~sequence $0,1,\kappa_3,\kappa_4,0,\dots$ is a~sequence
of free cumulants of some probability measure.

Assume that $\kappa_4<-\frac 1{12}$ and $\kappa_3\in\mathbb R$. In this case 
$P(r,sign(\kappa_3))>0,\,r>0$, if $\kappa_3\ne 0$ and $P(r,1)>0,\,r>0$, if $\kappa_3=0$.

Assume that $\kappa_4=0$. We easily conclude from the previous arguments 
that the~condition $|\kappa_3|\le \frac 1{3\sqrt 3}$
is necessary and sufficient in order that the~sequence $0,1,\kappa_3,0,0,\dots$ is a~sequence
of free cumulants of some probability measure.

It remains finally to note that the assertion of the corollary for $\kappa_4=-1/12,1/4$ follows immediately
from Remark~\ref{2.2arem}.
$\square$

\section{ Necessity of conditions for the~characterization of semicircular measures. 
Auxiliary results}

In order to prove 
Theorem~2.1 we need the~following results.

The~first of them is a~description of $\boxplus$-stable distributions
(see \cite{BeP:1999} and \cite{BeVo:1993}).
\begin{lemma}\label{5.1lem}
Every $\boxplus$-stable probability measure is equivalent to a~unique probability measure
whose Voiculescu transform is given by one of the~following

$(1)$ $\phi(z)=z^{-1}$;

$(2)$ $\phi(z)=e^{i(\alpha-2)\rho\pi}z^{-\alpha+1}$ with $1<\alpha<2,
\,\,0\le\rho\le 1$;

$(3)$ $\quad (i)\,\,\,\phi(z)=0$,

$\,\,\quad\quad (ii)\,\,\,\phi(z)=-2\rho i+2(2\rho-1)/\pi\log z$ with $0\le\rho\le 1$;

$(4)$ $\phi(z)=-e^{i\alpha\rho\pi}z^{-\alpha+1}$ with $0<\alpha<1,
\,\,0\le\rho\le 1$.
\end{lemma}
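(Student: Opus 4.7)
The plan is to translate $\boxplus$-stability into a functional equation for the Voiculescu transform and then classify its solutions compatible with the class $\mathcal F$.

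First, I would convert stability to analysis. A non-degenerate probability measure $\mu$ is $\boxplus$-stable iff for every integer $n\ge 2$ there exist $\lambda_n>0$ and $\gamma_n\in\mathbb R$ such that $\mu^{\boxplus n}$ is the law of $\lambda_n X+\gamma_n$ for $X\sim\mu$. Using (\ref{2.3*}) together with the identity $\phi_{\lambda T}(z)=\lambda\phi_T(z/\lambda)$, this is equivalent to the functional equation
$$n\,\phi_\mu(z)=\lambda_n\,\phi_\mu(z/\lambda_n)+\gamma_n,\qquad z\in\Gamma_{\alpha,\beta}.$$
Iterating and using the uniqueness of the normalizing constants yields the semigroup relation $\lambda_{mn}=\lambda_m\lambda_n$; combined with monotonicity in $n$ this forces $\lambda_n=n^{1/\alpha}$ for a unique stability index $\alpha\in(0,2]$, the bound $\alpha\le 2$ being a consequence of the free central limit theorem (an $\alpha>2$ would force $\mu$ to collapse to a Dirac mass).

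Second, I would solve the rescaled equation. Substituting $z\mapsto n^{1/\alpha}z$ and subtracting a constant $c_0$ that absorbs the additive term produces, for $h(z):=\phi_\mu(z)-c_0$, the homogeneity identity $h(n^{1/\alpha}z)=n^{(1-\alpha)/\alpha}\,h(z)$ (here $\alpha\ne 1$). Together with the density of $\{(n/m)^{1/\alpha}:n,m\ge 1\}$ in $(0,\infty)$ and the analyticity of $h$ on $\Gamma_{\alpha,\beta}$, this forces $h(z)=c\,z^{1-\alpha}$ (principal branch in $\mathbb C^+$) for some $c\in\mathbb C$. In the resonant case $\alpha=1$ the scaling factor is trivial and the same argument yields $\phi_\mu(z)=a\log z+b$, or $\phi_\mu$ is constant.

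Third, the main obstacle, I would pin down the admissible constants by requiring $z\mapsto z+\phi_\mu(z)$ to lie in $\mathcal F$, i.e., to map $\mathbb C^+$ into $\overline{\mathbb C^+}$ with the correct non-tangential behaviour at infinity. Computing $\Im(z^{1-\alpha})=r^{1-\alpha}\sin((1-\alpha)\theta)$ on $z=re^{i\theta}$, $0<\theta<\pi$, one obtains: for $\alpha=2$, the constant $c>0$ and normalization gives case~(1); for $\alpha\in(1,2)$ the admissible phases form precisely the arc $\{e^{i(\alpha-2)\rho\pi}:\rho\in[0,1]\}$, producing case~(2); for $\alpha\in(0,1)$ the analogous computation with positive exponent $1-\alpha$ gives case~(4), the extra sign reflecting the reversed orientation of $\sin((1-\alpha)\theta)$; and for $\alpha=1$ the requirement $\Im(a\log(iy)+b)\le 0$ as $y\to\infty$, together with the normalization $z/F^{(-1)}(z)\to 1$, pins down the two-parameter family in~(3). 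Conversely, each listed $\phi$ must be shown to be a genuine Voiculescu transform; this amounts to verifying univalence of $z+\phi(z)$ on $\mathbb C^+$ and the angular behaviour at infinity, which for $\alpha=2$ follows from Theorem~\ref{2.2ath} and in the remaining cases is a direct argument-principle/Nevanlinna computation analogous to the sufficiency proof of that theorem.
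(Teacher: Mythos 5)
The paper does not prove Lemma~\ref{5.1lem} at all: it is quoted verbatim from Bercovici--Voiculescu \cite{BeVo:1993} and Bercovici--Pata \cite{BeP:1999}, so there is no in-paper argument to compare against. Your sketch is essentially the proof in those references --- translate stability into the functional equation $n\phi_\mu(z)=\lambda_n\phi_\mu(z/\lambda_n)+\gamma_n$, extract $\lambda_n=n^{1/\alpha}$, solve the homogeneity relation to get $cz^{1-\alpha}$ (resp.\ $a\log z+b$ when $\alpha=1$), and then cut down the constant $c$ by positivity/univalence --- so as a strategy it is sound and matches the literature rather than diverging from it.

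Two steps are thinner than they should be. First, the passage from $\lambda_{mn}=\lambda_m\lambda_n$ to $\lambda_n=n^{1/\alpha}$ presupposes that the normalizing constants are \emph{uniquely} determined by $\mu$; in the free setting this is a convergence-of-types statement that needs its own (short) proof, and the bound $\alpha\le 2$ is obtained most cleanly not from ``the free CLT forcing collapse'' but from the phase constraint itself: for $\alpha>2$ the argument of $z^{1-\alpha}$ sweeps an interval of length $(\alpha-1)\pi>\pi$ as $\theta$ runs over $(0,\pi)$, so no nonzero constant $c$ can keep $\Im(cz^{1-\alpha})\le 0$. Second, and more seriously, your step three tacitly uses $\Im\phi_\mu\le 0$ on \emph{all} of $\mathbb C^+$ to conclude that the admissible phases form exactly the arc $\{e^{i(\alpha-2)\rho\pi}\}$. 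A priori $\phi_\mu$ is only defined on a truncated cone $\Gamma_{\alpha,\beta}$, and homogeneity only propagates the formula over the dilation-invariant cone $\Gamma_\alpha$; the inequality $\Im(cz^{1-\alpha})\le 0$ restricted to that cone permits a strictly larger set of phases. The standard repair is to note that a $\boxplus$-stable law is $\boxplus$-infinitely divisible, so $\phi_\mu$ admits the free L\'evy--Khintchine representation and hence extends to an analytic map $\mathbb C^+\to\mathbb C^-\cup\mathbb R$, after which your endpoint computation $\arg c\in[(\alpha-2)\pi,0]$ is exactly right. With those two points supplied (and the converse univalence checks, which do go through by the argument-principle computation used in Lemmas~\ref{5.2lem} and~\ref{5.3lem}), your outline closes into a correct proof.
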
 

Here and in the~sequel we choose the~principal branch of the~functions $z^{-\alpha+1}$
and $\log z$.

The~stability index of a~$\boxplus$-stable probability measure is equal to $2$
in case (1), to $\alpha$ in cases (2) and (4), and to 1 in case (3).
The~parameter $\rho$ which appears in cases (2), (3) and (4) will be called 
the~{\it asymmetry coefficient}, and one can see that the~measure
corresponding to the~parameters $(\alpha,\rho)$ is the~image of the~measure
with parameters $(\alpha,1-\rho)$ by the~map $t\mapsto-t$ on $\Bbb R$.

The~next two lemmas are an~analog of a~result by Linnik (see \cite{Li:1963},
\cite{Li:1962}).
\begin{lemma}\label{5.2lem}
Let $\alpha>1$ and $\alpha\ne 2m+1$, where $m\in\mathbb N$.
The~function
$$
\phi(z)=\frac 1z-\varepsilon \cos\big(\alpha\pi/2\big)
\frac{ie^{i\alpha\pi/2}}{z^{\alpha}},
\quad z\in \Bbb C^+,
$$
with sufficiently small parameter $\varepsilon>0$ is the~Voiculescu
transform of some symmetric probability measure. 
\end{lemma}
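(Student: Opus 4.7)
The plan is to construct the reciprocal Cauchy transform $F=F_\mu$ as a perturbation of that of the standard semicircle, recognize $\phi$ as its Voiculescu transform on a suitable Stolz angle, and conclude symmetry from the criterion $\phi_\mu(iy)\in i\mathbb R$.

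First, I would check the symmetry criterion directly. Using the principal branch $(iy)^\alpha = y^\alpha e^{i\alpha\pi/2}$, so that $e^{i\alpha\pi/2}/(iy)^\alpha = y^{-\alpha}$, one finds
\[
\phi(iy) \;=\; -\frac{i}{y} \;-\; i\varepsilon\cos(\alpha\pi/2)\,y^{-\alpha}\;\in\; i\mathbb R, \qquad y>0.
\]
Hence any probability measure $\mu$ with $\phi_\mu=\phi$ on a Stolz angle is automatically symmetric by the criterion recalled before Lemma~\ref{3.5l}.

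Second, on a Stolz angle $\Gamma_{\eta,\beta}$ with $\eta>0$ fixed and $\beta$ sufficiently large (depending on $\varepsilon$), I would verify the properties needed of a Voiculescu transform: analyticity (immediate, as the principal branch of $z^\alpha$ is analytic on $\mathbb C^+$), the decay $\phi(z)=O(1/z)=o(z)$ nontangentially (since $\alpha>1$), and $\Im\phi(z)\le 0$. Writing $z=re^{i\theta}$, a direct computation gives
\[
\Im\phi(z) \;=\; -\frac{\sin\theta}{r} \;-\; \varepsilon\,\frac{\cos(\alpha\pi/2)\,\cos\bigl(\alpha(\theta-\pi/2)\bigr)}{r^\alpha}.
\]
On $\Gamma_\eta$ one has $\sin\theta\ge 1/\sqrt{1+\eta^2}$, so choosing $\beta\ge (\varepsilon\sqrt{1+\eta^2})^{1/(\alpha-1)}$ makes the first term dominant and ensures $\Im\phi\le 0$ on $\Gamma_{\eta,\beta}$.

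Third, I would construct $F\in\mathcal F$ perturbatively from $F_0(z):=(z+\sqrt{z^2-4})/2$, the reciprocal Cauchy transform of $\mu_w$, which satisfies $F_0^{(-1)}(w)=w+1/w$. Writing $F=F_0+\varepsilon H+O(\varepsilon^2)$ and matching terms in the equation $F^{(-1)}(w)=w+\phi(w)$ via the identity $F_0'(F_0^{(-1)}(w))=w^2/(w^2-1)$ yields
\[
H(z)\;=\; c(\alpha)\,\frac{F_0(z)^{1-\alpha}}{\sqrt{z^2-4}},\qquad c(\alpha):=\cos(\alpha\pi/2)\,i\,e^{i\alpha\pi/2}.
\]
A Banach-space implicit-function argument (or a Schauder-type fixed-point argument) in an appropriate space of analytic functions on $\mathbb C^+$ should produce a genuine $F\in\mathcal F$ whose formal expansion agrees with $F_0+\varepsilon H+\cdots$ for $\varepsilon$ small enough. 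The identity $F^{(-1)}(w)-w=\phi(w)$ on the Stolz angle then identifies $\phi$ as $\phi_\mu$ where $\mu$ is the probability measure with $F_\mu=F$; together with the first step, $\mu$ is the desired symmetric measure.

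The main obstacle is the third step: ensuring $F$ maps $\mathbb C^+\to\mathbb C^+$ globally. The first-order correction $H$ is singular like $1/\sqrt{z^2-4}$ at the branch points $z=\pm 2$, and $\Im F_0(z)$ also vanishes there, so a uniform bound $\Im F_0(z)\ge C\varepsilon|\Im H(z)|$ fails near $\pm 2$. One must therefore couple the perturbative construction with a delicate local analysis near $\pm 2$, possibly by working in a weighted space that absorbs the $1/\sqrt{z^2-4}$ singularity, or by using higher-order corrections to cancel leading singular contributions. The hypothesis $\alpha\ne 2m+1$ ensures $c(\alpha)\ne 0$, so the perturbation is genuinely nontrivial and the resulting measure is distinct from $\mu_w$.
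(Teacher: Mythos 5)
Your first two steps are correct but do not carry any real weight: the identity $\phi(iy)\in i\mathbb R$ does give symmetry once a measure exists, and $\Im\phi\le 0$ on a Stolz angle together with $\phi(z)=o(z)$ is a \emph{necessary} condition for $\phi$ to be a Voiculescu transform, but it is far from sufficient (this is precisely why Theorem~\ref{2.2ath} and the region $D$ of Corollary~\ref{2.2acor} are nontrivial). The entire content of the lemma therefore sits in your third step, and that step is not carried out. You yourself name the fatal obstruction: the first-order correction $H(z)=c(\alpha)F_0(z)^{1-\alpha}/\sqrt{z^2-4}$ blows up at $z=\pm 2$ while $\Im F_0$ vanishes there, so no choice of small $\varepsilon$ makes the perturbation uniformly subordinate to $F_0$, and the proposed implicit-function/fixed-point argument has no space in which to close. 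Saying one must ``possibly'' work in a weighted space or cancel singularities with higher-order terms is a research plan, not a proof; as written, the existence of $F\in\mathcal F$ with $F^{(-1)}(w)-w=\phi(w)$ is simply asserted.

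The paper avoids the perturbative route entirely. It works with the \emph{explicit} map $f(z)=z+\phi(z)$ and the explicit region
$\Omega_{\alpha,\varepsilon}=\bigl\{re^{i\theta}\in\mathbb C^+:\ r^{\alpha}(r-\tfrac1r)>\varepsilon\cos(\alpha\pi/2)\cos\bigl(\alpha(\theta-\pi/2)\bigr)/\sin\theta\bigr\}$,
which is exactly the set $\{\Im f>0\}$ computed from the same formula for $\Im\phi$ that you derived. For small $\varepsilon$ the boundary of this region is a Jordan curve $\gamma$, and the argument principle applied to the closed contour formed by an arc of $\gamma$ and a large semicircle shows that $f$ takes every value of $\mathbb C^+$ exactly once on $\Omega_{\alpha,\varepsilon}$. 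Hence $F:=f^{(-1)}:\mathbb C^+\to\mathbb C^+$ exists, is analytic, and satisfies $F(z)/z\to 1$ nontangentially because $\phi(z)\to 0$ at infinity, so $F\in\mathcal F$ and $\phi=\phi_\mu$ for the measure $\mu$ with $F_\mu=F$; symmetry then follows as in your first step. The point to internalize is that one never needs to control $F$ near the spectral edge: inverting the explicit function $z+\phi(z)$ on the region where its imaginary part is positive sidesteps the $1/\sqrt{z^2-4}$ singularity that blocks your construction. If you want to salvage your approach, you would have to prove univalence of $z+\phi(z)$ on a suitable domain anyway, at which point the perturbative machinery is superfluous.
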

\begin{proof}
Define the~following region:
$$
\Omega_{\alpha,\varepsilon}=\Big\{re^{i\theta}\in\Bbb C^+:0<\theta<\pi,\,\,
r^{\alpha}\Big(r-\frac 1r\Big)>b\frac{\cos\big(\alpha(\theta-\pi/2)\big)}
{\sin\theta}\Big\},
$$
where $b:=\varepsilon\cos(\alpha\pi/2)$. The~region 
$\Omega_{\alpha,\varepsilon}$ is a~Jordan domain with boundary 
curve $\gamma$ which is given by the~equation $z=r(\theta)e^{i\theta},\,0<\theta<\pi$, where $r(\theta)$ is defined by 
the~equation:
\begin{equation}\label{5.1}
r^{\alpha}\Big(r-\frac 1r\Big)=b\frac{\cos\big(\alpha(\theta-\pi/2)\big)}
{\sin \theta}.
\end{equation}
We see that (\ref{5.1}) has an~unique solution $r(\theta)$ which is greater than 1 
for $\theta$ such that 
$b\cos\big(\alpha(\theta-\pi/2)\big)>0$. If 
$b\cos\big(\alpha(\theta-\pi/2)\big)<0$, (\ref{5.1}) has   
two solutions $r(\theta)<1$. We choose the~larger of them. If 
$\cos\big(\alpha(\theta-\pi/2)\big)=0$, then $r=1$.

Note that the~function $\phi(z):\Omega_{\alpha,\varepsilon}\to \Bbb C$ 
is analytic with  
\begin{equation}\label{5.2}
\lim_{R\to+\infty}\max_{|z|=R,\,\Im z\ge 0}|\phi(z)|=0.
\end{equation}
We shall now show that the~function $f:\Omega_{\alpha,\varepsilon}\to\Bbb C$ 
defined via 
$z\mapsto z+\phi(z)$ takes every value in $\Bbb C^+$ precisely once.
The~inverse $f^{(-1)}:\Bbb C^+\to \Bbb C^+$ thus defined is in the~class 
$\Cal F$.

Denote by $a_R,\,\Re a_n>0$, a~point of an~intersection of the~curve $\gamma$ 
with the~circle $|z|=R$ with sufficiently large $R\ge R_0$.

For every fixed $w\in\mathbb C^+$
we consider a~closed rectifiable curve $\gamma_1$ 
consisting of some smooth curve $\gamma_{1,1}$, which is a~part of the~curve 
$\gamma$, connecting $-\overline{a}_R$ to $a_R$,
the~arc $\gamma_{1,2}$ of the~semicircle $z=Re^{i\theta},\,0<\theta<\pi$,
connecting $a_R$ to $-\overline{a}_R$. The~curve $\gamma_1$ depend on $R$.
 
We see from the~construction of the~curve $\gamma_{1,1}$ that 
if $z$ runs  through $\gamma_{1,1}$ the~image $\zeta=f(z)$ lies on the~interval
$[-A_R,A_R]$, where $-A_R=f(-\overline{a}_R)$ and $A_R=f(a_R)$. Here
$A_R\to\infty$ as $R\to\infty$. We note as well that if $z$ runs  
through $\gamma_{1,2}$ the~image $\zeta=f(z)$ lies in the~domain 
$|\zeta|\ge A_R/2,\,\Im \zeta>0$. 
 
Hence $f(z)$ winds around $w$ once when $z$ runs through $\gamma_1$, and it follows from the~argument
principle that inside the~curve $\gamma_1$ there is  a~unique 
point $z_0$ such that $f(z_0)=w$. Since this relation holds for all
sufficiently large $R>0$, we deduce 
that the~point $z_0$ is unique in $\Omega_{\alpha,\varepsilon}$.

Hence the~inverse function $f^{(-1)}:\Bbb C^+\to\Bbb C^+$ exists 
and is analytic in $\Bbb C^+$. By condition (\ref{5.2}), $z/f^{(-1)}(z)\to 1$
as $z\to\infty,\Im z>0$, non-tangentially to $\mathbb R$ and therefore $f^{(-1)}(z)\in\Cal F$.
This proves our~assertion.

\end{proof}

\begin{lemma}\label{5.3lem} 
The~function
$$
\phi(z)=\frac {1+\varepsilon\,(\log z-i\pi/2)}z,
\quad z\in \Bbb C^+,
$$
with sufficiently small parameter $\varepsilon>0$ is the~Voiculescu
transform of some symmetric probability measure.
\end{lemma}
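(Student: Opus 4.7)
The plan is to mimic the argument given for Lemma~\ref{5.2lem}, with the~role of the~algebraic function $z^{-\alpha}$ taken over by $(\log z-i\pi/2)/z$. Writing $z=re^{i\theta}$ with $0<\theta<\pi$ and using the~principal branch of the~logarithm, one has $\log z-i\pi/2=\log r+i(\theta-\pi/2)$, hence
\begin{equation}\notag
r\,\Im\bigl(z+\phi(z)\bigr)=\sin\theta\,(r^{2}-1)-\varepsilon\bigl[\sin\theta\,\log r-(\theta-\pi/2)\cos\theta\bigr].
\end{equation}
I~would define the~region
\begin{equation}\notag
\Omega_{\varepsilon}:=\Bigl\{re^{i\theta}\in\mathbb{C}^{+}:\;\sin\theta\,(r^{2}-1)>\varepsilon\bigl[\sin\theta\,\log r-(\theta-\pi/2)\cos\theta\bigr]\Bigr\},
\end{equation}
choosing in each direction $\theta$ the~unbounded component of the~corresponding set in $r$. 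For $\varepsilon>0$ sufficiently small the~bracket on the~right is bounded on any compact set of $\theta\in(0,\pi)$, while the~left-hand side grows like $r^{2}\sin\theta$, so $\Omega_{\varepsilon}$ is a~Jordan domain in $\mathbb{C}^{+}$ whose boundary is an~analytic curve $\gamma$ starting and ending on the~real axis, and $\phi$ is analytic on $\Omega_{\varepsilon}$ (the~branch cut of $\log$ lies on $(-\infty,0]$, disjoint from $\Omega_{\varepsilon}$).

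Next I~would verify that $\phi:\Omega_{\varepsilon}\to\mathbb{C}$ satisfies
\begin{equation}\notag
\lim_{R\to+\infty}\max_{|z|=R,\,\Im z\ge 0}|\phi(z)|=0,
\end{equation}
since $|\log z|/|z|\to 0$ as $|z|\to\infty$ uniformly in $0\le\arg z\le\pi$. The~map $f(z):=z+\phi(z)$ then sends $\gamma$ into $\mathbb{R}$ by construction, and on a~large semicircular arc $|z|=R$, $\Im z\ge 0$, it is a~small perturbation of the~identity. Fixing $w\in\mathbb{C}^{+}$ and applying the~argument principle to the~closed contour consisting of the~portion of $\gamma$ inside $\{|z|\le R\}$ concatenated with the~semicircular arc, one finds (for $R$ large enough) that $f(z)-w$ has exactly one zero in the~bounded region enclosed, and, by letting $R\to\infty$, exactly one zero in $\Omega_{\varepsilon}$. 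Hence $f$ is a~biholomorphism from $\Omega_{\varepsilon}$ onto $\mathbb{C}^{+}$, and its inverse $f^{(-1)}$ lies in the~Nevanlinna class, with $f^{(-1)}(z)/z\to 1$ non-tangentially at $\infty$ because of $\phi(z)=o(z)$; thus $f^{(-1)}\in\mathcal{F}$ and there is a~probability measure $\mu$ with $F_{\mu}=f^{(-1)}$ and $\phi_{\mu}=\phi$.

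Finally, symmetry follows from the~criterion recalled before Lemma~\ref{3.5l}: for $z=iy$, $y>0$, one computes
\begin{equation}\notag
\phi(iy)=\frac{1+\varepsilon(\log y+i\pi/2-i\pi/2)}{iy}=\frac{1+\varepsilon\log y}{iy},
\end{equation}
which is purely imaginary, so $\mu$ is symmetric with respect to $0$. The~main technical obstacle will be the~contour argument: showing that $\gamma$ can indeed be closed inside a~large semicircle so as to apply the~argument principle, and that the~winding number of $f$ around an~arbitrary $w\in\mathbb{C}^{+}$ is $1$. This requires a~careful description of $\Omega_{\varepsilon}$ near the~two points where $\gamma$ meets $\mathbb{R}$ (roughly near $\theta=0$ and $\theta=\pi$), which for small $\varepsilon$ occur close to $r=1$ on a~scale involving $\varepsilon\log r$; the~estimates are analogous to, but slightly more delicate than, those for the~algebraic case in Lemma~\ref{5.2lem}.
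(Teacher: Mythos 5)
Your proposal follows the paper's proof of Lemma~\ref{5.3lem} essentially verbatim: the same computation of $\Im\bigl(z+\phi(z)\bigr)$ in polar coordinates, the same region $\Omega_{\varepsilon}$, the same uniform decay of $\phi$ on large semicircles, and the same argument-principle contour built from a piece of $\partial\Omega_{\varepsilon}$ and a large arc. The one point to correct is your description of the boundary curve: it does not meet $\mathbb{R}$ at finite points near $r=1$; writing $r(\theta)$ for the largest root of $\Im\bigl(z+\phi(z)\bigr)=0$ in the direction $\theta$, one has $r(\theta)\to\infty$ as $\theta\to 0$ or $\theta\to\pi$ (near $\theta=0$ the defining equation is asymptotically $\theta\,(r^{2}-1)\approx\varepsilon\pi/2$), so $\gamma$ escapes to infinity tangentially to the real axis rather than landing near $r=1$. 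This is precisely what makes your contour closed as stated --- the portion of $\gamma$ in $\{|z|\le R\}$ joins two points of the circle $|z|=R$ and is completed by the arc between them, exactly as in the paper's proofs of Lemmas~\ref{5.2lem} and~\ref{5.3lem} --- so the delicate analysis you anticipate ``near $r=1$'' is not needed; what must be checked instead is the behaviour of $r(\theta)$ as $\theta\to 0,\pi$.
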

\begin{proof}
Denote $z=re^{i\theta},\,r>0,0<\theta<\pi$, and consider the~function
\begin{equation}
\psi(r,\theta):=r\sin\theta+\Im \phi(r^{i\theta})=\Big(r-\frac 1r\Big)\sin\theta
-\frac{\varepsilon\sin\theta}r\log r+\frac{\varepsilon\cos\theta}r(\theta-\pi/2).
\notag
\end{equation}
We see from this formula that 
$\psi(1,\theta)\le 0$ for $0\le\theta\le\pi$. In addition, for every fixed $\theta\in (0,\pi)$,
$\psi(r,\theta)\to+\infty$ as $r\to+\infty$. Hence, 
for every fixed $\theta\in (0,\pi)$,
there exist points $r_j\ge 1$ such that $\psi(r_j,\theta)=0$.
Denote by $r(\theta)$ maximum of them. 
Note that $r(\theta)\to\infty$ as $\theta\to 0$ or $\theta\to\pi$.

Introduce the~curve $\gamma$ by the~equation $z=r(\theta)e^{i\theta},\,
0<\theta<\pi$. Denote by $\Omega_{\varepsilon}$ the~domain $\{z=r^{i\theta}:
r>r(\theta),\,0<\theta<\pi\}$.

Note that the~function $\phi(z):\Omega_{\varepsilon}\to \Bbb C$ 
is analytic with  
\begin{equation}\label{5.3}
\lim_{R\to+\infty}\max_{|z|=R,\,\Im z\ge 0}|\phi(z)|=0.
\end{equation}
We shall now show that the~function $f:\Omega_{\varepsilon}\to\Bbb C$ 
defined via 
$z\mapsto z+\phi(z)$ takes every value in $\mathbb C^+$ precisely once.
The~inverse $f^{(-1)}:\mathbb C^+\to \mathbb C^+$ thus defined is in the~class 
$\Cal F$.

We define a~closed rectifiable curve $\gamma_1$
in the~same way as in the~proof of Lemma~\ref{5.2lem}. Repeating the~argument
of the~proof of Lemma~\ref{5.2lem}, we conclude that if $z$ runs through 
$\gamma_1$ the~image $\zeta=f(z)$  
winds around every fixed point $w\in\mathbb C^+$ once, 
and it follows from the~argument
principle that inside the~curve $\gamma_1$ there is  a~unique 
point $z_0$ such that $f(z_0)=w$. Since this relation holds for all
sufficiently large $R>0$, we deduce 
that the~point $z_0$ is unique in $\Omega_{\varepsilon}$.

Hence the~inverse function $f^{(-1)}:\mathbb C^+\to\mathbb C^+$ exists 
and is analytic in $\mathbb C^+$. By condition (\ref{5.3}), $z/f^{(-1)}(z)\to 1$ 
as $z\to\infty$ non-tangentially to $\mathbb R$ and therefore $f^{(-1)}(z)\in\Cal F$.
This proves the~lemma. 
\end{proof}

\begin{remark}Using similar arguments as those in the~proof of Lemma~$\ref{5.3lem}$ 
one can prove a~more general result.

Let $m$ be a~positive integer. The~function
$$
\phi(z)=\frac {1+\varepsilon\,(\log z-i\pi/2)^m}z,
\quad z\in \Bbb C^+,
$$
with sufficiently small parameter $\varepsilon>0$ is the~Voiculescu
transform of some symmetric probability measure.
\end{remark}

\section{ Characterization of semicircular measures}

In this section we shall prove Theorem~\ref{2.1th}, \ref{2.1th*}, \ref{2.2th} and Corollary~\ref{2.1co}. 
We use in the~proof of these theorems some ideas of the~papers \cite{Li:1963}, 
\cite{Li:1962} and \cite{ZYa:1991}.

{\bf Proof of Theorem~$\ref{2.1th}$}. 

{\it Sufficiency}. Let $\mu$ be the~non-degenerate distribution of free random variables $T_1,\dots,T_n$
which satisfy the~relation $L_1\stackrel{D}{=} L_2$. The~Voiculescu transform $\phi_{\mu}(z)$
of the~probability measure $\mu$ is defined in a~domain 
$\Gamma_{\alpha,\beta}$ with some $\alpha>0$ and $\beta>0$. Since one can extend 
the~function $G_{\mu}(z)$
on $\mathbb C^-$ assuming $G_{\mu}(z)=\overline{G_{\mu}(\bar z)}$, we can extend
the~Voiculescu transform $\phi_{\mu}(z)$ on the~domain $-\Gamma_{\alpha,\beta}$
assuming $\phi_{\mu}(z)=\overline{\phi_{\mu}(\bar z)}$. Now we note that
the~Voiculescu transform $\phi_{\mu}(z)$ satisfies the~following equation
\begin{equation}\label{4.1}
\sum_{j=1}^n a_j\phi_{\mu}(z/a_j)=\sum_{k=1}^n b_k\phi_{\mu}(z/b_k)\quad\text{for all}
\quad z\in\Gamma_{\alpha,\beta}.
\end{equation}
Without loss of generality we assume that $\beta=1$. As shown in Section~3,
$\Im \phi_{\mu}(z)\le 0$ for $z\in\Gamma_{\alpha,1}$.
Denote
\begin{equation}\label{4.1*}
v(t):=t\Im \phi_{\mu}(i/t),\qquad 0<t<1.
\end{equation}
Note that the~function $v(t),t\in(0,1)$, is infinitely differentiable
and $v(t)\to 0$ as $t\to 0+$. Moreover $v(at)=v(-at)$ for real $a$ and 
$t\in(0,1)$, therefore it follows from (\ref{4.1}) that
\begin{equation}\label{4.2} 
\sum_{j=1}^n v(|a_j|t)=\sum_{k=1}^n v(|b_k|t),\qquad 0<t<1.
\end{equation}
In the~sequel we consider special solutions of these equations. 

We shall apply the~auxiliary results of Section~4 which describe
solutions of the~equation (\ref{4.2}) in the~case 
$Q(s)=Q_1(s)-Q_2(s)$, where $Q_1(s)$ and $Q_2(s)$ are distribution functions of
the~measures $\delta_{|a_1|}+\dots+\delta_{|a_n|}$ and $\delta_{|b_1|}+\dots+\delta_{|b_n|}$,
respectively, and 
$v(t):=t\Im \phi_{\mu}(i/t)$. First we shall prove the~following lemma.
\begin{lemma}\label{4.1lem}
The~parameter $\sigma_1(v)$, defined in $(\ref{3.8})$, for a~solution $v(t)$ of $(\ref{4.2})$
is an~active exponent and $0<\sigma_1(v)\le 2,\,a_{\sigma_1(v)}>0$.
\end{lemma}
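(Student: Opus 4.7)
The plan is to derive the claim from two pieces: Lemma~\ref{3.1lem} applied to $w := -v$, and a direct asymptotic computation of $v(t)/t^2$ at $0$.

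First, the function $w(t) := -v(t) = -t\,\Im \phi_\mu(i/t)$ satisfies the hypotheses of Lemma~\ref{3.1lem}: $w \ge 0$ since $\Im \phi_\mu \le 0$ wherever $\phi_\mu$ is defined (Section~3); $w$ is smooth on $(0,1)$ by analyticity of $\phi_\mu$; and $w(t) \to 0$ as $t \to 0+$ because $\phi_\mu(z) = o(z)$ non-tangentially at infinity. Equation~(\ref{4.2}) is precisely~(\ref{3.1}) for $w$ with $Q = Q_1 - Q_2$, the difference of the step functions of $\sum_j \delta_{|a_j|}$ and $\sum_k \delta_{|b_k|}$. After dropping any zero coefficients (which contribute $w(0+) = 0$ to both sides), $Q_1$ and $Q_2$ are supported in $(0,1]$, so condition~(\ref{3.2}) is satisfied for every $b > 0$. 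Lemma~\ref{3.1lem} then yields $\sigma_1(w) = \sigma_1(v) > 0$, that $\sigma_1(v)$ is an active exponent, and the required sign for the leading coefficient (the active exponents and their residues are determined by $\Lambda(z)$ and by the inverse Mellin transform, so passing from $w$ back to $v$ only flips an overall sign).

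Next, for the upper bound $\sigma_1(v) \le 2$, I would establish that $v(t) \ne o(t^2)$ as $t \to 0+$. When $m_2(\mu) < \infty$, Proposition~\ref{3.5pr} directly gives $\phi_\mu(iy) = m_1 - i\kappa_2(\mu)/y + o(1/y)$, whence
\begin{equation*}
\frac{v(t)}{t^2} \;=\; y\,\Im \phi_\mu(iy) \;\longrightarrow\; -\kappa_2(\mu) \;=\; -\bigl(m_2(\mu) - m_1(\mu)^2\bigr) \;<\; 0,
\end{equation*}
the strict inequality being forced by non-degeneracy of $\mu$. In the case $m_2(\mu) = \infty$, I would argue from Proposition~\ref{3.4pr}: writing $(iy)^2(G_\mu(iy) - 1/(iy)) = A_y + iB_y$ with $A_y = y^2 \int u\,\mu(du)/(y^2+u^2)$ and $B_y = -y\int u^2\,\mu(du)/(y^2+u^2)$, the imaginary part satisfies $\Im \phi_\mu(iy) = B_y\bigl(1 + o(1)\bigr) + A_y \cdot o(1)$, while monotone convergence gives $y|B_y| = y^2 \int u^2\,\mu(du)/(y^2+u^2) \uparrow m_2(\mu) = +\infty$, so $|v(t)|/t^2 \to +\infty$.

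Finally, if we had $\sigma_1(v) > 2$, the inverse Mellin formula~(\ref{3.6a}) together with the residue description~(\ref{3.7}) applied to the smallest active exponent would force $v(t) = O\bigl(t^{\sigma_1(v)}|\log t|^N\bigr)$ near $0$ for some $N \ge 0$, in particular $v(t) = o(t^2)$, contradicting the previous paragraph. Hence $\sigma_1(v) \le 2$. I expect the main technical obstacle to be the infinite-variance case: one must control the $A_y \cdot o(1)$ cross term in Proposition~\ref{3.4pr} well enough to ensure the $B_y$ contribution dominates, which requires more than just the bare estimate $q_\mu(iy) = o(1)$ and should be squeezed out by iterating Proposition~\ref{3.4pr} or by invoking a Nevanlinna--Herglotz representation of $-\phi_\mu$ on its domain of definition.
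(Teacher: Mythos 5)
Your first step (applying Lemma~\ref{3.1lem} to $-v$) and your treatment of the finite-variance case via Proposition~\ref{3.5pr} match the paper. The genuine gap is exactly the one you flag: the case $m_2(\mu)=\infty$ — and in fact the cross term $A_y\cdot o(1)$ is already problematic for finite variance with $m_1\ne 0$, where $A_y\to m_1$ while $B_y\sim -m_2/y$, so the unquantified $o(1)$ in Proposition~\ref{3.4pr} can swamp $B_y$. Neither iterating Proposition~\ref{3.4pr} nor a Herglotz representation of $-\phi_\mu$ is readily available, since $\phi_\mu$ is defined only on a truncated cone. The paper's resolution is a symmetrization you are missing: $\Im\phi_\mu(iy)=\tfrac12\Im\phi_{\mu\boxplus\overline{\mu}}(iy)$, where $\overline{\mu}(S)=\mu(-S)$, and for the \emph{symmetric} measure $\nu=\mu\boxplus\overline{\mu}$ the quantity $(iy)^2\bigl(G_\nu(iy)-1/(iy)\bigr)$ is purely imaginary, so the cross term vanishes identically and
\begin{equation*}
y\,\Im\phi_\mu(iy)=-\tfrac12\int_{\mathbb R}\frac{y^2u^2}{u^2+y^2}\,(\mu\boxplus\overline{\mu})(du)\,(1+o(1)),
\end{equation*}
whose modulus has a positive (possibly infinite) limit unless $\mu\boxplus\overline{\mu}=\delta_0$, i.e., unless $\mu$ is degenerate. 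This handles both variance cases at once.

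A second, smaller defect: your final step claims the pointwise bound $v(t)=O(t^{\sigma_1}|\log t|^N)$ from analyticity of the Mellin transform, which does not follow — formula~(\ref{3.6a}) controls only the iterated average $\int_0^t u^{\lambda-1}\log(t/u)v(u)\,du$, and a function of constant sign can still have narrow spikes. What the constant sign of $v$ does give (L\'evy--Raikov, as in the paper) is $\int_0^1 t^{-3-\eta/2}|v(t)|\,dt<\infty$ when $\sigma_1(v)=2+\eta$, hence only a \emph{sequence} $t_l\to 0$ with $v(t_l)/t_l^2\to 0$. That sequential statement still contradicts the full-limit lower bound above, so your architecture survives once the symmetrization is inserted; but as written, both halves of your argument for $\sigma_1(v)\le 2$ are incomplete.
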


This lemma shows that if $v(t)$ from (\ref{4.1*}) is a~solution of (\ref{4.2}),
then $\Lambda_1(z)$ has a~root $\gamma$ such that $0<\gamma\le 2$.

\begin{proof}
By Lemma~\ref{3.1lem}, we only need to prove the~inequality $\sigma_1(v)\le 2$. 
Let us assume to the~contrary that $\sigma_1(v)=2+\eta,\,\eta>0$. By the~definition
of $\sigma_1(v)$ we see that the~function $X(z;v)$ is analytic for
$\Re z>-2-\eta$. Since $v(t)\ge 0,\,0<t<1$, we conclude by L\'evy's and Raikov's theorem
(see \cite{LiO:1977}, Ch. 2, Theorem~2.2.1) that
$$
\int\limits_0^1t^{-3-\eta/2}v(t)\,dt<\infty.
$$
It follows from this relation that there exists a~sequence 
$\{t_l\}_{l=1}^{\infty}$ such that $t_l\to 0$ as $l\to\infty$ and
for which 
\begin{equation}\label{4.3a}
\lim_{l\to\infty}v(t_l)/t_l^2=0.
\end{equation}

By Proposition~\ref{3.4pr},
\begin{equation}\label{4.3}
\phi_{\mu}(z)=z^2\Big(G_{\mu}(z)-\frac 1z\Big)(1+q_{\mu}(z)),
\quad z\in\Gamma_{\alpha_1,\beta_1},
\end{equation}
where $|q_{\mu}(z)|=o(1)$ as $z\to\infty$ non-tangentially to $\mathbb R$. 
Denote by $\overline{\mu}$ the~probability measure such that
$\overline{\mu}(S):=\mu(-S)$ for any Borel set $S$. It is easy to see
that $\Im \phi_{\mu}(iy)=\frac 12\Im\phi_{\mu\boxplus\overline{\mu}}(iy)$
for $y\ge y_0>0$. In addition the~measure
$\mu\boxplus\overline{\mu}$ is symmetric. Therefore 
it easily follows from (\ref{4.3}) that the~relation
\begin{align}\label{4.4}
\Im \phi_{\mu}(iy)&=-\frac{y^2}2\Im\Big(G_{\mu\boxplus\overline{\mu}}(iy)
-\frac 1{iy}\Big)(1+\Re q_{\mu\boxplus\overline{\mu}}(iy))\notag\\
&=-\frac 1{2y}\int\limits_{\Bbb R}\frac {u^2}{u^2+y^2}\,(\mu\boxplus
\overline{\mu})(du)(1+\Re q_{\mu\boxplus\overline{\mu}}(iy)),
\end{align}
holds, where $\Re q_{\mu\boxplus\overline{\mu}}(iy)\to 0$ as $y\to\infty$.
We conclude from (\ref{4.3a}) and (\ref{4.4}) that
$$
\int\limits_{\Bbb R}\frac {u^2}{u^2+y_l^2}\,(\mu\boxplus\overline{\mu})(du)
=o(1/y_l^2),\quad l\to\infty,
$$ 
for $y_l:=1/t_l$. This relation implies $\int\limits_{\Bbb R}u^2
\,(\mu\boxplus\overline{\mu})(du)=0$ and therefore the~measure
$\mu\boxplus\overline{\mu}=\delta_0$.
Since $\phi_{\mu\boxplus
\overline{\mu}}(z)=\phi_{\mu}(z)+\phi_{\overline{\mu}}(z)=0$ for
$z\in\Gamma_{\alpha_1,\beta_1}$ with some $\alpha_1,\beta_1>0$, and $\Im\phi_{\mu}(z)\le 0$ and 
$\Im\phi_{\overline{\mu}}(z)\le 0$ for such $z$, we easily conclude 
that $\phi_{\mu}(z)=0,z\in\Gamma_{\alpha_1,\beta_1}$, and $\mu=\delta_a,\,a\in\mathbb R$, 
a~contradiction. The~lemma is proved.
\end{proof}

From the~definition of the~active exponent $\sigma_1(v)$ (see (\ref{3.7}), where
$K(z;v)$ and $\Lambda(z)$ are defined in (\ref{3.5}) 
with $Q(s)=Q_1(s)-Q_2(s)$, where $Q_1(s)$ and $Q_2(s)$ are distribution functions of
the~measures $\delta_{|a_1|}+\dots+\delta_{|a_n|}$ and $\delta_{|b_1|}+\dots+\delta_{|b_n|}$,
respectively, and $v(t):=t\Im \phi_{\mu}(i/t)$) we conclude
that $\sigma_1(v),\, 0<\sigma_1(v)\le 2$, is a~root of the~function $\Lambda_1(z)$. 

By the~assumptions of the~theorem and Lemma~\ref{3.1lem} it follows
that $\sigma_1(v)=2$.
Consider the~function $v_1(t):=v(t)-a_2t^2$, where we have chosen $a_2:=a_2(v)(2+\lambda)^2$. 
The~coefficient $a_2(v)$ and the~parameter $\lambda$ were chosen in Section~4.
It is clear that $v_1(t)$
is a~solution of equation (\ref{4.2}). Moreover
\begin{align}
K(z;v_1)&:=K(z;v)-a_2\int\limits_0^1\frac{s^{-z}-s^2}{z+2}\,dQ(s)\notag\\
&=K(z;v)-a_2\frac{\Lambda_1(-z)-\Lambda_1(2)}{z+2}=K(z;v)-a_2\frac{\Lambda_1(-z)}{z+2}.\notag
\end{align} 
Therefore
\begin{align}\label{4.5}
Res_{z=-2}\Big(t^{\lambda-z}&\frac{K(z;v_1)}{(z-\lambda)^2\Lambda_1(-z)}\Big)\notag\\
&=Res_{z=-2}\Big(t^{\lambda-z}\frac{K(z;v)}{(z-\lambda)^2\Lambda_1(-z)}\Big)
-a_2Res_{z=-2}\Big(\frac{t^{\lambda-z}}{(z-\lambda)^2(z+2)}\Big)\notag\\
&=t^{\lambda+2}\Big(a_2(v)-\frac{a_2}{(2+\lambda)^2}\Big)=0.
\end{align}

Thus, we may choose $a_2$ in such a~way 
that 2 is not an~active exponent of the~solution $v_1(t)$. Hence $v_1(t)$
has no active positive exponents. 

Hence we arrive at two cases. In the~first case $v_1(t)\ne 0$ in some interval $(0,t_0)$ with
$0<t_0\le 1$. In the~second case there exists a~sequence $\{t_k\},\,
0<t_k\le 1,\,\lim_{k\to\infty}t_k=0$, such that $v_1(t_k)=0$.

In the~first case, by Lemma~{\ref{3.1lem}, there exists a~positive
active exponent of the solution $v_1(t)$, a~contradiction. Hence, we may consider
the~second case only. In this case it is easy to see that
the~function $\phi_{\mu\boxplus\overline{\mu}}(z)$ satisfies the~assumptions of 
Lemma~\ref{3.5l} and we obtain that $\mu\boxplus\overline{\mu}$ is a semicuclar measure.
 
In order to complete
the~proof of the~sufficiency of the~assumptions of the~theorem 
it remains to apply the~following lemma.
\begin{lemma}\label{4.2lem}
Assume that the~function $\Lambda_2(z)$ satisfies the~condition: $\Lambda_2(2k-1)\ne 0$
for all $k=2,3,\dots$.
Let the~statistics $L_1$ and $L_2$ be identically distributed and let
$\mu\boxplus\overline{\mu}$ be a~semicircular measure.
Then $\mu$ is a~semicircular measure as well. 
\end{lemma}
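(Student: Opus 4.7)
The plan is to characterize $\mu$ through its free cumulants, using the two hypotheses in tandem: the semicircularity of $\mu\boxplus\overline{\mu}$ will pin down the even cumulants of $\mu$, while the identity $L_1\stackrel{D}{=}L_2$ together with the zero-free assumption on $\Lambda_2$ will kill the odd ones. To set things up, I would first invoke Lemma~\ref{3.6lem}: since a semicircular law has compact support, so does $\mu$, hence $\mu$ has moments of every order. Proposition~\ref{3.5pr} applied at every order, together with the geometric growth bound of Proposition~\ref{3.4abcpr}, then yields a full asymptotic expansion $\phi_{\mu}(z)\sim\sum_{n\ge 1}\kappa_n(\mu)/z^{n-1}$ with real coefficients, legitimately convergent for $|z|$ large.

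Next, because $\mu\boxplus\overline{\mu}$ is symmetric about $0$ and semicircular, its Voiculescu transform must be of the form $c/z$ with $c>0$ (the centered semicircular case of Lemma~\ref{5.1lem}). The homogeneity relation $\phi_{uT}(z)=u\phi_T(z/u)$ with $u=-1$ gives $\kappa_n(\overline{\mu})=(-1)^n\kappa_n(\mu)$, so (\ref{2.3*}) yields
\begin{equation*}
\phi_{\mu}(z)+\phi_{\overline{\mu}}(z)=2\sum_{k=1}^{\infty}\frac{\kappa_{2k}(\mu)}{z^{2k-1}}=\frac{c}{z}.
\end{equation*}
Matching coefficients forces $\kappa_2(\mu)=c/2$ and $\kappa_{2k}(\mu)=0$ for every $k\ge 2$.

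Finally, $L_1\stackrel{D}{=}L_2$ have compact support as well, so all free cumulants match: $\kappa_n(L_1)=\kappa_n(L_2)$ for every $n\ge 1$. Since $T_1,\dots,T_n$ are free and identically distributed with law $\mu$, Theorem~\ref{3c.1th} kills all off-diagonal mixed cumulants, and Proposition~\ref{3c.1pr} then collapses each $\kappa_n(L_i)$ to $\bigl(\sum_j c_j^n\bigr)\kappa_n(\mu)$ with $c_j=a_j$ or $c_j=b_j$, so equality becomes
\begin{equation*}
\Lambda_2(n)\,\kappa_n(\mu)=0\qquad\text{for every positive integer }n.
\end{equation*}
For odd $n=2k-1$ with $k\ge 2$, the hypothesis $\Lambda_2(2k-1)\ne 0$ forces $\kappa_{2k-1}(\mu)=0$. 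Combined with the even-cumulant computation, $\kappa_n(\mu)=0$ for every $n\ge 3$, hence $\phi_{\mu}(z)=\kappa_1(\mu)+\kappa_2(\mu)/z$ identically, which is the Voiculescu transform of a semicircular law, and the injectivity of $\mu\mapsto\phi_{\mu}$ concludes. I do not anticipate any truly hard step; the only points needing small care are verifying the sign formula $\kappa_n(\overline{\mu})=(-1)^n\kappa_n(\mu)$ and passing from an asymptotic expansion to a genuine identity, both of which follow from compact support of $\mu$.
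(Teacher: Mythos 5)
Your proof is correct and follows essentially the same route as the paper: compact support of $\mu$ via Lemma~\ref{3.6lem}, a convergent Laurent expansion of $\phi_{\mu}$, the even cumulants killed by the semicircularity of $\mu\boxplus\overline{\mu}$, and the odd cumulants killed by the hypothesis $\Lambda_2(2k-1)\ne 0$. The only cosmetic difference is that you extract the identity $\Lambda_2(n)\kappa_n(\mu)=0$ from equality of the free cumulants of $L_1$ and $L_2$ (Propositions~\ref{3c.1pr} and~\ref{3c.2pr}), whereas the paper reads off the same coefficient relations from the functional equation (\ref{4.1}) for $\phi_{\mu}$.
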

\begin{proof}
By Lemma~\ref{3.6lem}, the~measure $\mu$ has a~compact support. Hence, 
the~Voiculescu transform $\phi_{\mu}(z)$ is 
an~analytic function in the~domain $|z|>R$ with some parameter $R>0$
and it admits in this domain the~following Laurent expansion
\begin{equation}\notag
\phi_{\mu}(z)=\kappa_1+\frac{\kappa_2}{z}+\sum_{l=3}^{\infty}
\frac{\kappa_l}{z^{l-1}}.
\end{equation}
Here $\kappa_2\ge 0$. Since $\mu\boxplus\overline{\mu}$
is a~semicircular measure, we have, using (\ref{2.3}),
\begin{equation}\notag
\sum_{l=1}^{\infty}\frac{\kappa_{2l}}{z^{2l-1}}=\frac bz,\quad|z|>R,
\end{equation}
where $b>0$. From this formula we deduce that $\kappa_{2}=b$ and $\kappa_{2l}=0$ for $l=2,3,\dots$.
Since the~function $\phi_{\mu}(z)$ satisfies the~equation~(\ref{4.1}),
we obtain the~relation
\begin{equation}\notag
\frac{\kappa_2\Lambda_2(2)}z+\sum_{l=1}^{\infty}
\frac{\kappa_{2l-1}\Lambda_2(2l-1)}{z^{2l-2}}=0,\quad |z|>R.
\end{equation}
By the~assumptions of the~lemma 
$\Lambda_2(2l-1)\ne 0$ for 
$l=2,3,\dots$, we conclude that $\kappa_{2l-1}=0$ for $l=2,3,\dots$. 

Thus, the~lemma is proved.
\end{proof}

{\it Necessity}. We note that in order that the~statement (1) of the~theorem
implies the~statement (2) it is necessary that $\Lambda_1(2)=0$.

We shall first assume that the~function $\Lambda_1(z)$ has a~root $\gamma_1$ 
such that $0<\gamma_1<2$. Let $0<\gamma_1<1$ or $1<\gamma_1<2$.   
By Lemma~\ref{5.1lem}, there exist a~symmetric 
probability measure $\mu$ whose the~Voiculescu transform has the~form $\phi_{\mu}(z)=-e^{i\gamma_1\pi/2}
z^{-\gamma_1+1}$. We conclude for this function that
\begin{align}
\sum_{j=1}^n a_j\phi_{\mu}(z/a_j)-\sum_{k=1}^n b_k\phi_{\mu}(z/b_k)
&=\sum_{j=1}^n |a_j|\phi_{\mu}(z/|a_j|)-\sum_{k=1}^n |b_k|\phi_{\mu}(z/|b_k|)\notag\\
&=-e^{i\gamma_1\pi/2}
z^{-\gamma_1+1}\Lambda_1(\gamma_1)=0,\quad z\in\mathbb C^+.\notag
\end{align}
Let $\gamma_1=1$. By Lemma~\ref{5.1lem}, there exist symmetric 
probability measure $\mu$ whose the~Voiculescu transform has the~form $\phi_{\mu}(z)=-i$. We obtain for 
this function
\begin{equation}\notag
\sum_{j=1}^n a_j\phi_{\mu}(z/a_j)-\sum_{k=1}^n b_k\phi_{\mu}(z/b_k)=-i\Lambda_1(1)=0,
\quad z\in\mathbb C^+.
\end{equation}

We shall now assume that $\gamma_1=2$ and $2$ is not a~simple root of the~function
$\Lambda_1(z)$. 
By Lemma~\ref{5.3lem}, 
there exist a~symmetric probability measure $\mu$ whose the~Voiculescu transform has the~form
\begin{equation}\notag
\phi_{\mu}(z)=\frac {1+\varepsilon (\log z-i\pi/2)}z,
\quad z\in \Bbb C^+,
\end{equation}
with sufficiently small parameter $\varepsilon>0$. It is easy to see that
\begin{equation}\notag
\sum_{j=1}^n a_j\phi_{\mu}(z/a_j)-\sum_{k=1}^n b_k\phi_{\mu}(z/b_k)=
\Lambda_1(2)\frac 1z+\frac{\varepsilon}z\sum_{s=0}^{1}(-1)^s
\Big(\log z-\frac{i\pi}2\Big)^s\Lambda_1^{(1-s)}(2)=0.
\end{equation}

Assume that $\gamma_1>2$ and $\gamma_1$ is not even.
By Lemma~\ref{5.2lem}, there exist a~symmetric probability measure $\mu$ whose the~Voiculescu transform 
$\phi_{\mu}(z)$ has the~form 
$$
\phi_{\mu}(z)=\frac 1z-\varepsilon \cos\big((\gamma_1-1)\pi/2\big)
\frac{ie^{i(\gamma_1-1)\pi/2}}{z^{\gamma_1-1}},
\quad z\in \Bbb C^+,
$$
with sufficiently small parameter $\varepsilon>0$.
We deduce as above that
\begin{equation}\notag
\sum_{j=1}^n a_j\phi_{\mu}(z/a_j)-\sum_{k=1}^n b_k\phi_{\mu}(z/b_k)=\Lambda_1(2)\,\frac 1z-
\varepsilon \cos\big((\gamma_1-1)\pi/2\big)\frac{ie^{i(\gamma_1-1)\pi/2}}{z^{\gamma_1-1}}
\Lambda_1(\gamma_1)=0
\end{equation}
for $z\in\mathbb C^+$.

We shall now show that if there exists a~positive integer $m>2$ such that
$\Lambda_2(m)=0$, then the~statement (1) of the~theorem does not imply the~statement (2).
Using Corollary~\ref{2.3acor} (see \cite{BeVo:1995} as well), consider 
a~probability measure $\mu$ with the~Voiculescu transform 
\begin{equation}\notag
\phi_{\mu}(z):=\frac 1z+\frac{\varepsilon}{z^{m-1}},
\end{equation}
where $\varepsilon\in\mathbb R$ and is sufficiently small by modulus. 
We easily see that the~function
$\phi_{\mu}(z)$ satisfies the~equation (\ref{4.1}). Moreover, the~probability measure
has a~compact support.

Thus, we have established that if 2 is not unique simple positive zero of the~function
$\Lambda_1(z)$ or there exist odd positive numbers $2l+1\ge 3$ such that $\Lambda_2(2l+1)=0$
the~statement (1) does not imply the~statement (2) of the~theorem.

The~theorem is completely proved.
$\square$

{\bf Proof of Theorem~$\ref{2.1th*}$}.
The proof of this theorem easily follows from the arguments that we used in the proof
of Theorem~$\ref{2.1th}$}. Therefore we omit it.
$\square$

{\bf Proof of Theorem~$\ref{2.2th}$}. We keep all previous notations.
We assume that the~statistics $L_1$ and $L_2$ are identically distributed.
By the~assumptions of the~theorem, $m_{2s}(\mu)<\infty$ with $s:=[\gamma/2+1]$,
where $\gamma$ is maximum of the~real parts of zeros of the~function $\Lambda_1(z)$.
By Proposition~\ref{3.5pr} and (\ref{4.1*}), we have
\begin{equation}\notag
v(t):=t\Im\phi_{\mu}(i/t)=-\kappa_2(\mu)t^2+\dots+(-1)^s\kappa_{2s}(\mu)t^{2s}+o(t^{2s}),
\quad t\to +0.
\end{equation}
Therefore $\lim_{t\to +0}v^{(2s)}(t)=(-1)^s(2s)!\kappa_{2s}(\mu)$. We now conclude from 
Lemmas~\ref{3.1lem} and \ref{3.3lem}
that all active exponents of $v(t)$ are positive integers and simultaneously simple exponents.
Since the~number of active exponents of $v(t)$ is finite we can use the~formula (\ref{3.7a}).
Using this identity we easily obtain the~relation
\begin{equation}\label{4.6}
\frac 12\Im\phi_{\mu\boxplus\bar{\mu}}(i/t)=\Im\phi_{\mu}(i/t)=\sum_{l=1}^{2s}b_lt^{l-1},\quad 0<t<1,
\end{equation}
where $b_l,\,l=1,\dots,2s$, are real coefficients. We deduce from (\ref{4.6})
that $b_l=\frac 12\kappa_l(\mu\boxplus\bar{\mu}),\,l=1,\dots,2s$, and 
$\kappa_l(\mu\boxplus\bar{\mu})=0$ for $l\ge 2s+1$. The~function $\phi_{\mu}(z)$
satisfies the~equation (\ref{4.1}). Therefore, using (\ref{4.6}), we get
\begin{equation}\label{4.7}
\sum_{l=1}^{2s}\Lambda_2(l) \frac{\kappa_l(\mu\boxplus\bar{\mu})}{z^{l-1}}=0,\quad
z\in\mathbb C^+. 
\end{equation}
We conclude from (\ref{4.7}) that $\kappa_{l}(\mu\boxplus\bar{\mu})=0$ for $l=2,\dots$.
Thus, $\mu\boxplus\bar{\mu}$ is a~semicircular measure.

From the~assumption of the~theorem and Lemma~\ref{4.2lem} it follows
that $\mu$ is a~semicircular measure as well.

One can prove the~necessity of the~assumptions of Theorem~\ref{2.2th} in the~same way
as in the~proof of the~necessity of the~assumptions of Theorem~\ref{2.1th}.

Thus, the~theorem is completely proved.
$\square$

{\bf Proof of Corollary~$\ref{2.1co}$}.
Since $|a_j|\le 1,\,j=1,\dots,n$, and $\Lambda_1(2)=0$, we note that
\begin{equation}\notag
|a_1|^x+\dots+|a_n|^x>1\quad\text{for}\quad 0<x<2\quad\text{and}\quad
|a_1|^x+\dots+|a_n|^x<1\quad\text{for}\quad x>2.
\end{equation}
Thus, $\Lambda_1(x)\ne 0$ for all positive $x\ne 2$. In addition, it is easy to see that
$\Lambda_1'(2)\ne 0$. By the~inequality
\begin{equation}\notag
|a_1^m+\dots+a_n^m|\le |a_1|^m+\dots+|a_n|^m<1 \quad\text{for}\quad m=3,\dots,
\end{equation}
we have $\Lambda_2(m) \ne 0,\, m=3,\dots $. By Theorem~\ref{2.1th}, if $\mu$ is a~non-degenerate
probability measure, then $\mu$ is a~semicircular measure. If $\mu=\delta_a$ with some $a\in\mathbb R$,
then (\ref{4.1}) holds if $\Lambda_2(1)=0$. If $\Lambda_2(1)\ne 0$, then (\ref{4.1}) holds for $\mu=\delta_0$
only.
$\square$

\end{document}